\numberwithin{equation}{section}
\theoremstyle{definition}
\newtheorem{defn}{Definition}[section]
\newtheorem{exms}[defn]{Examples}
\theoremstyle{plain}
\newtheorem{cor}[defn]{Corollary}
\newtheorem{thm}[defn]{Theorem}
\newtheorem{lem}[defn]{Lemma}
\newtheorem{prop}[defn]{Proposition}
\newtheorem{defnfact}[defn]{Definition-Fact}
\newtheorem*{Freyd-Mitchell}{Freyd-Mitchell embedding Theorem}
\def\Fib{\operatorname{Fib}}
\def\CoFib{\operatorname{CoFib}}
\def\Weq{\operatorname{Weq}}
\def\Coker{\operatorname{Coker}}
\def\Hom{\operatorname{Hom}}
\def\Ext{\operatorname{Ext}}
\def\Ker{\operatorname{Ker}}
\def\TFib{\operatorname{TFib}}
\def\TCoFib{\operatorname{TCoFib}}
\def\Ho{\operatorname{Ho}}
\def\X{\mathcal{X}}
\def\Y{\mathcal{Y}}
\def\A{\mathcal{A}}
\title[ Homotopy categories and fibrant model structures]{Homotopy categories and fibrant model structures}
\author[Xue-Song Lu, Pu Zhang] {Xue-Song Lu, Pu Zhang$^*$ \\ \\  School of Mathematical Sciences \\
Shanghai Jiao Tong University,  \ Shanghai 200240, \ China }
\thanks{$^*$ Corresponding author}
\thanks{leocedar@sjtu.edu.cn \ \ \ \ pzhang$\symbol{64}$sjtu.edu.cn}
\thanks{\it 2020 Mathematics Subject Classification. Primary 18N40, 18N55; Secondary 18E35}
\thanks{Supported by National Natural Science Foundation of China, Grant No. 12131015 and Natural Science Foundation of Shanghai under Grant No. 23ZR1435100.}
\begin{document}
\maketitle
\begin{abstract} The homotopy category of a model structure on a weakly idempotent complete additive category
is proved to be equivalent to the additive quotient of the category of cofibrant-fibrant objects with respect to
the subcategory of cofibrant-fibrant-trivial objects.
A model structure on pointed category is fibrant,
if every object is a fibrant object. Fibrant model structures is explicitly described by trivial cofibrations, and also by fibrations.
Fibrantly weak factorization systems are introduced, fibrant model structures are constructed via fibrantly weak factorization systems, and
a one-one correspondence between fibrantly weak factorization systems and fibrant model structures is given.
Applications are given to rediscover
the $\omega$-model structures and the $\mathcal W$-model structures, and their relations with exact model structures are discussed.
\end{abstract}

\vskip10pt

\section{\bf Introduction}

This paper is three fold.
First, Qullen's homotopy category of a model structure on a weakly idempotent complete additive category
is expressed as the additive quotient of the category of cofibrant-fibrant objects with respect to
the subcategory of cofibrant-fibrant-trivial objects.

\vskip5pt

Second, a fibrant model structure
on a weakly idempotent complete additive category
is explicitly described by trivial cofibrations, and  also by fibrations.
To construct fibrant model structures,
fibrantly weak factorization systems are introduced;
and a one-one correspondence between  fibrant model structures and fibrantly weak factorization systems is given.
We will see that, even on exact categories,
fibrant model structures are in general not exact model structures.

\vskip5pt

Finally, as applications we rediscover two concrete fibrant model structures,
namely, the $\omega$-model structures on weakly idempotent complete exact categories,
and the $\mathcal W$-model structures on weakly idempotent complete additive categories.
Their relations with exact model structures are discussed and various examples are illustrated.

\subsection{The homotopy category of a model structure}

\vskip5pt

The homotopy category is a main object of study.
If $\A$ is a pointed model category, by introducing the  loop (suspension) functor and (co)fibration sequences,
D. Quillen has found that the homotopy category $\Ho(\mathcal A)$ satisfies  the similar axioms of a triangulated category ([Q1, p. 3.10]).
In fact, even if $\A$ is an additive model category, $\Ho(\mathcal A)$ is not triangulated, in general,
but pretriangulated ([H1, 6.5], [BR, II, 1.1]).
Anyway, the most known triangulated categories arise as $\Ho(\mathcal A)$.

\vskip5pt

The following result will be proved in Section 3.

\begin{thm}\label{hoA} \ Let $(\CoFib, \Fib, \Weq)$ be a model structure on a weakly idempotent complete additive category $\mathcal A$.
Then the homotopy category ${\rm Ho}(\mathcal A)$ is equivalent as an additive category to the quotient $(\mathcal C\cap \mathcal F)/(\mathcal C\cap \mathcal F\cap \mathcal W)$, where $\mathcal C,  \mathcal F, \mathcal W$ are respectively the class of cofibrant objects,
the class of fibrant objects, and the class of trivial objects.
\end{thm}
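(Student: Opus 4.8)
The plan is to deduce the result from Quillen's description of the homotopy category. Since $\mathcal A$ is additive, ${\rm Ho}(\mathcal A)$ is an additive category and the canonical functor induces an equivalence ${\rm Ho}(\mathcal A)\simeq\pi(\mathcal A_{cf})$, where $\mathcal A_{cf}:=\mathcal C\cap\mathcal F$, the category $\pi(\mathcal A_{cf})$ has the same objects as $\mathcal A_{cf}$, and $\Hom_{\pi(\mathcal A_{cf})}(X,Y)=\Hom_{\mathcal A}(X,Y)/N(X,Y)$ with $N(X,Y)$ the subgroup of maps homotopic to $0$ (the homotopy relation on $\mathcal A_{cf}$ being additive). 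Since $\mathcal C$ is closed under coproducts and $\mathcal F$ under products, $\mathcal A_{cf}$ is a full additive subcategory of $\mathcal A$ and $\mathcal C\cap\mathcal F\cap\mathcal W$ a full additive subcategory of it, and by construction the additive quotient $\mathcal A_{cf}/(\mathcal C\cap\mathcal F\cap\mathcal W)$ has the same objects and kills precisely those maps that factor through an object of $\mathcal C\cap\mathcal F\cap\mathcal W$. Thus everything reduces to the claim: \emph{for $X,Y\in\mathcal C\cap\mathcal F$ and $f\colon X\to Y$, one has $f\sim 0$ if and only if $f$ factors through an object of $\mathcal C\cap\mathcal F\cap\mathcal W$.} Granting it, $N(X,Y)$ is exactly the ideal defining $\mathcal A_{cf}/(\mathcal C\cap\mathcal F\cap\mathcal W)$, so that quotient coincides with $\pi(\mathcal A_{cf})$, and composition with Quillen's (additive) equivalence finishes the proof.

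For the ``if'' direction, write $f=ba$ with $a\colon X\to W$, $b\colon W\to Y$ and $W\in\mathcal C\cap\mathcal F\cap\mathcal W$. The zero object is cofibrant-fibrant, and $W\to 0$ is a weak equivalence between cofibrant-fibrant objects, hence a homotopy equivalence by Whitehead's theorem; its homotopy inverse being forced to be the zero map $0\to W$, we get $\Id_W\sim 0_W$. As homotopy is compatible with composition on cofibrant-fibrant objects, $f=b\,\Id_W\,a\sim b\,0_W\,a=0$.

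For the ``only if'' direction --- the substantial one --- suppose $f\sim 0$. Since $X$ is cofibrant and $Y$ is fibrant, a left homotopy realizing $f\sim 0$ can be taken along a good cylinder object $X\oplus X\xrightarrow{(i_0,i_1)}IX\xrightarrow{\sigma}X$, obtained by factoring the fold map as a cofibration followed by a trivial fibration. Each $i_\varepsilon\colon X\to IX$ is the composite of the biproduct inclusion $X\to X\oplus X$ (a cofibration since $X$ is cofibrant) with the cofibration $(i_0,i_1)$, hence a cofibration; and $\sigma i_\varepsilon=\Id_X$ forces $i_\varepsilon$ to be a weak equivalence, so $i_0,i_1$ are trivial cofibrations. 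Let $H\colon IX\to Y$ satisfy $Hi_0=f$, $Hi_1=0$, and form the pushout $C$ of $0\leftarrow X\xrightarrow{i_1}IX$, with projection $p\colon IX\to C$. Being a cobase change of the trivial cofibration $i_1$, the map $0\to C$ is a trivial cofibration, so $C\in\mathcal C\cap\mathcal W$; and $Hi_1=0$ makes $H$ factor as $H=\bar H p$, so $f=Hi_0=\bar H(pi_0)$ factors through $C$. Finally factor $\bar H\colon C\to Y$ as $C\xrightarrow{j}C'\xrightarrow{q}Y$ with $j$ a trivial cofibration and $q$ a fibration: then $C'$ is fibrant ($q$ a fibration, $Y$ fibrant), cofibrant ($0\to C\xrightarrow{j}C'$ are cofibrations) and trivial ($0\to C\xrightarrow{j}C'$ are weak equivalences), so $C'\in\mathcal C\cap\mathcal F\cap\mathcal W$ and $f=q\,j\,p\,i_0$ factors through it.

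The main obstacle is precisely this final step: the natural object through which a null-homotopic map factors, the cofiber $C$ of an end-inclusion of a cylinder, is automatically cofibrant and trivial but in general not fibrant, so one must repair fibrancy by one more (trivial cofibration, fibration) factorization while verifying that cofibrancy and triviality survive. The remaining inputs --- additivity of ${\rm Ho}(\mathcal A)$ and of the homotopy relation on $\mathcal A_{cf}$, Whitehead's theorem, compatibility of homotopy with composition, and closure of $\mathcal C,\mathcal F$ under biproducts (for which weak idempotent completeness of $\mathcal A$ is the relevant hypothesis) --- are standard for additive model categories and underpin the reduction of the first paragraph.
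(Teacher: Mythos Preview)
Your approach is quite different from the paper's, and the difference is not cosmetic. The paper explicitly remarks (just after stating the theorem) that ``we can not use the homotopy relation, since the existence of push-outs and pull-backs is not guaranteed in a weakly idempotent complete additive category,'' and accordingly proves the result by working directly with the Gabriel--Zisman description of the localization: it first shows that the inclusion $\mathcal C\cap\mathcal F\hookrightarrow\mathcal A$ induces an equivalence $\Ho(\mathcal C\cap\mathcal F)\simeq\Ho(\mathcal A)$ (Theorem~\ref{HoCcapF}), and then identifies $\Ho(\mathcal C\cap\mathcal F)$ with the additive quotient via Lemma~\ref{locilzationasquotient}, by checking that a morphism between cofibrant-fibrant objects is a weak equivalence iff its image in the additive quotient is invertible.

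Your proof, by contrast, takes the equivalence $\Ho(\mathcal A)\simeq\pi(\mathcal A_{cf})$, Whitehead's theorem, and the fact that homotopy on $\mathcal A_{cf}$ is an additive congruence as given ``standard'' inputs. That is precisely the gap. These results are proved in Quillen's and Hovey's treatments under the hypothesis that $\mathcal M$ has finite (co)limits; for instance, the usual proof that left homotopy is transitive glues two cylinders via a pushout $IX\amalg_X I'X$, and the dual for right homotopy uses a pullback of path objects. Neither is available here. The specific pushout you form---the cofiber $C=\Coker(i_1)$---does happen to exist, because $\sigma i_1=\Id_X$ makes $i_1$ a split monomorphism and weak idempotent completeness supplies the cokernel; but you do not note this, and more importantly the foundational equivalence $\Ho(\mathcal A)\simeq\pi(\mathcal A_{cf})$ on which your entire first paragraph rests is exactly what the paper spends Section~3 establishing by other means. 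It is plausible that in the additive setting one can rescue the homotopy-relation approach (since $f\sim g$ iff $f-g\sim 0$, and null-homotopies via a \emph{fixed} good cylinder visibly form a subgroup, sidestepping the transitivity pushout), but you would then still owe a proof that this subgroup is independent of the cylinder and that the resulting $\pi(\mathcal A_{cf})$ really computes $\Ho(\mathcal A)$. As written, these are asserted rather than proved, so the argument is incomplete in the stated generality.
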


This result is known for exact model structures,
see J. Gillespie \cite[Proposition 4.4]{G2};
and for the $\omega$-model structures on weakly idempotent complete exact categories,
see A. Beligiannis and I. Reiten \cite[VIII, Theorem 4.2]{BR}  and \cite[Theorem 1.1]{CLZ}.
Moreover, if the exact model structure is given by a hereditary Hovey triple on a weakly idempotent complete exact category,
then $\mathcal C\cap \mathcal F$ is a Frobenius category with $\mathcal C\cap \mathcal F\cap \mathcal W$ as the class of
the projective-injective objects, see J. \v{S}\'{t}ov\'{i}\v{c}ek \cite[Theorem 6.21]{S} (also H. Becker  \cite[Proposition 1.1.14]{Bec}).

\vskip5pt

Quillen has introduced  (left, right) homotopy relation on pointed model category $\mathcal M$
(thus,  $\mathcal M$ has limits and colimits).
This gives an equivalent relation on $\mathcal C\cap \mathcal F$,
and the corresponding quotient category is just ${\rm Ho}(\mathcal{M})$ ([Q1, p.1.13]), up to an equivalence of category.
However, different from the proofs of the known cases mentioned above, here we can not use the homotopy relation, since
the existence of push-outs and pull-backs is not guaranteed  in a  weakly idempotent complete additive category.
Instead, one of the key steps is to use the fact that a kind of localization categories can be described as
additive quotient categories. See Theorem \ref{HoCcapF} and Lemma \ref{locilzationasquotient}.

\subsection{Descriptions of fibrant model structures}

Abelian model structures on abelian categories have been introduced by
M. Hovey, and this has been extended to exact model structures on exact categories by J. Gillespie.
The Hovey correspondence gives a way of constructing  exact model structures via
two complete cotorsion pairs (\cite[Theorem 2.2]{H2}; [G3, 3.3]; \cite[6.9]{S}).
Many model structures known in algebra are exact (abelian).
A. Beligiannis and I. Reiten construct a $\omega$-model structure on a weakly idempotent complete exact category
(\cite[Theorem 4.6]{BR}, \cite[Theorem 1.3]{CLZ}), which uses one hereditary complete cotorsion pair, but is not exact in general.

\vskip5pt

A category is {\it pointed} ([Q1]) if it has a zero object.
An additive category is {\it weakly idempotent complete},
provided that every splitting monomorphism has a cokernel,
or equivalently,
every splitting epimorphism has a kernel (see e.g. \cite[7.1, 7.2]{Bu}).

\vskip5pt

Let $(\CoFib, \Fib, \Weq)$ be a model structure on pointed category $\mathcal A$. We use the following notations. Put
$\TCoFib = \CoFib\cap\Weq$ and $\TFib = \Fib\cap\Weq$. Denote by $\mathcal C$ the class of cofibrant objects $X$, i.e.,
$0 \longrightarrow X$ is in $\CoFib$, $\mathcal F$ the class of fibrant objects $X$, i.e.,
$X \longrightarrow 0$ is in $\Fib$, and $\mathcal W$ the class of trivial objects $X$, i.e.,
$0 \longrightarrow X$ is in $\Weq$, or equivalently, $X \longrightarrow 0$ is in $\Weq$.
Put
${\rm T}\mathcal C = \mathcal C\cap\mathcal W$ and ${\rm T}\mathcal F  = \mathcal F\cap\mathcal W$.
We will also use these notations
$$\CoFib, \ \ \Fib, \ \ \Weq, \ \ \TCoFib, \ \ \TFib, \ \ \mathcal C, \ \ \mathcal F, \ \ \mathcal W, \ \ {\rm T}\mathcal C, \ \ {\rm T}\mathcal F$$
even if $(\CoFib, \Fib, \Weq)$ has not yet been known a model structure, but it will be a candidate for  a model structure.

\vskip5pt

A model structure on a pointed category will be said to be {\it fibrant},
if every object is a fibrant object. Fibrant model structures
on weakly idempotent complete additive categories, can be explicitly described by the trivial cofibrations, and also by the fibrations,
as the following result shows.

\begin{thm}\label{fibrant} \ Let  $(\CoFib, \Fib, \Weq)$ be a model structure on weakly idempotent complete additive category $\mathcal A$.
Then the followings are equivalent.

  \vskip5pt

  $(1)$ \ It is a fibrant model structure.

  \vskip5pt

  $(2)$ \ $\TCoFib=\{\text{splitting monomorphism} \ f \ |\ \Coker f\in {\rm T}\mathcal C \}$.

\vskip5pt

  $(3)$ \   $\Fib = \{ \text{morphism} \ f \ | \ f \ \mbox{is} \ \Hom_{\mathcal A}({\rm T}\mathcal C, -)\mbox{-epic}\}$.

\vskip5pt

If this is the case, then the homotopy category $\Ho(\mathcal A)\cong \mathcal C/{\rm T}\mathcal C$, as additive categories. \end{thm}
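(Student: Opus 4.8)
The plan is to run everything through the two weak factorization systems $(\CoFib,\TFib)$ and $(\TCoFib,\Fib)$ carried by the model structure, using that $\Fib$ consists of the maps with the right lifting property against $\TCoFib$, that $\TCoFib$ consists of the maps with the left lifting property against $\Fib$, and hence that $\TCoFib$ is closed under composition, retracts and cobase change along any pushout that exists, and contains all isomorphisms. Weak idempotent completeness is needed only to guarantee that a splitting monomorphism $f\colon A\to B$ has a cokernel $C=\Coker f$; then $0\to C$ is the cobase change of $f$ along $A\to 0$ (the defining cokernel square), and $B\cong A\oplus C$ with $f$ the inclusion (\cite[7.1, 7.2]{Bu}). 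With these tools I would prove the cycle $(1)\Rightarrow(2)\Rightarrow(3)\Rightarrow(1)$ and then read off the homotopy category from Theorem~\ref{hoA}.

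For $(1)\Rightarrow(2)$: given $f\in\TCoFib$, apply its left lifting property against the fibration $A\to 0$ — this is the only place fibrancy enters — to the commuting square with top edge $\Id_A\colon A\to A$, left edge $f$ and right edge $A\to 0$; the resulting lift $B\to A$ is a retraction of $f$, so $f$ is a splitting monomorphism and $C=\Coker f$ exists. Since $0\to C$ is the cobase change of $f$ along $A\to 0$ and $\TCoFib$ is stable under cobase change, $0\to C\in\TCoFib$, i.e.\ $C\in\mathcal C\cap\mathcal W={\rm T}\mathcal C$. Conversely, if $f$ is a splitting monomorphism with $C=\Coker f\in{\rm T}\mathcal C$, then $0\to C\in\TCoFib$; its cobase change along $0\to A$ is the biproduct inclusion $A\to A\oplus C$ (this pushout exists, being a biproduct), so $A\to A\oplus C\in\TCoFib$; and composing with the isomorphism $A\oplus C\cong B$ under which this inclusion corresponds to $f$ yields $f\in\TCoFib$.

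For $(2)\Rightarrow(3)$: by $(2)$, every trivial cofibration is, up to isomorphism, a biproduct inclusion $A\to A\oplus W$ with $W\in{\rm T}\mathcal C$. Transporting a lifting problem for a map $f\colon X\to Y$ to this shape, a lift is exactly a choice of $W\to X$ whose composite with $f$ is the $W$-component of the bottom map; hence $f\in\Fib$ if and only if $\Hom_{\mathcal A}(W,X)\to\Hom_{\mathcal A}(W,Y)$ is surjective for every $W\in{\rm T}\mathcal C$ (for the forward direction take $A=0$ and use $0\to W\in\TCoFib$), i.e.\ if and only if $f$ is $\Hom_{\mathcal A}({\rm T}\mathcal C,-)$-epic. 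Finally $(3)\Rightarrow(1)$ is immediate: for any $X$ the map $\Hom_{\mathcal A}(W,X)\to\Hom_{\mathcal A}(W,0)=0$ is surjective for all $W\in{\rm T}\mathcal C$, so $X\to 0\in\Fib$ and $X$ is fibrant.

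For the final assertion, once the model structure is fibrant the class $\mathcal F$ is all of $\mathcal A$, so $\mathcal C\cap\mathcal F=\mathcal C$ and $\mathcal C\cap\mathcal F\cap\mathcal W={\rm T}\mathcal C$, and Theorem~\ref{hoA} gives $\Ho(\mathcal A)\cong\mathcal C/{\rm T}\mathcal C$ as additive categories. I expect the argument to be essentially routine bookkeeping with lifting properties; the one point to be handled carefully — and, as flagged in the introduction, the place where weak idempotent completeness is genuinely used — is that here one may invoke stability of trivial cofibrations under cobase change only for pushouts that actually exist, so one must check that the squares in play, namely the cokernel square defining $\Coker f$ and the biproduct square defining $A\oplus C$, are genuine pushouts; they are, the first because $f$ is a splitting monomorphism in a weakly idempotent complete category and the second because biproducts are pushouts over $0$.
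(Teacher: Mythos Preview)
Your proof is correct and follows essentially the same route as the paper: the same cycle $(1)\Rightarrow(2)\Rightarrow(3)\Rightarrow(1)$, the same use of lifting against $A\to 0$ (packaged in the paper as Lemma~\ref{split}(2)) to see a trivial cofibration splits, the same cobase-change argument to identify $\Coker f\in{\rm T}\mathcal C$ (Lemma~\ref{morphism}(4)) and its converse (Lemma~\ref{morphism}(6)), and the same explicit construction of a lift via the splitting in $(2)\Rightarrow(3)$; the paper merely quotes its preliminary lemmas where you unwind them inline. The final appeal to Theorem~\ref{hoA} is identical.
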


\subsection{Fibrantly weak factorization systems and construction of fibrant model structures}

Weak factorization systems have been studied in \cite{AHRT} and \cite{S}. See Definition \ref{WFS}.
To construct fibrant model structures, we introduce {\it fibrantly weak factorization systems.}

\begin{defn}\label{FWFS}  \ Let $\mathcal{A}$ be a pointed  category.
A pair $({\rm L},{\rm R})$ of classes of morphisms is a fibrantly weak factorization system on $\mathcal A$, provided that the following conditions are satisfied:

\vskip5pt

$(1)$ \ $({\rm L}, {\rm R})$ is a weak factorization system $($cf. Definition \ref{WFS}$)$.

\vskip5pt

$(2)$ \ For $f: A\longrightarrow B$ and $g:B\longrightarrow C$, if $f, gf\in {\rm R}$, then $g\in {\rm R}$.

\vskip5pt

$(3)$ \ The class $\mathcal L\cap \mathcal R$ of objects  is contravariantly finite in $\mathcal A$,
where $$\mathcal L =\{X \ | \ 0\longrightarrow X \ \mbox{is in} \ {\rm L}\}, \ \ \ \ \mathcal R =\{X \ | \ X\longrightarrow 0 \ \mbox{is in} \ {\rm R}\}.$$
\end{defn}

\vskip5pt

Assume that $(\CoFib, \TFib)$ is a fibrantly weak factorization system on $\mathcal A$. We stress that, although here we use the notations
$\CoFib$ and $\TFib$, however, at this moment they are not assumed to be from a model structure. Put

\begin{equation}\begin{aligned}
\mathcal C & = \{ \mbox{object} \ X \ | \ \mbox{morphism} \ 0\longrightarrow X \ \mbox{is in} \ \CoFib\} \\
\mathcal W & = \{ \mbox{object} \ X \ | \ \mbox{morphism}  \ X\longrightarrow 0 \ \mbox{is in} \ \TFib\} \\
{\rm T}\mathcal C & = \mathcal C\cap \mathcal W\\
\Fib & = \{ \text{morphism} \ f \ | \ f \ \mbox{is} \ \Hom_{\mathcal A}({\rm T}\mathcal C, -)\mbox{-epic}\}
\\
\TCoFib  & =\{\text{splitting monomorphism} \ f \ |\ \Coker f\in {\rm T}\mathcal C \}\\
\Weq & = \TFib\circ\TCoFib.\end{aligned}\end{equation}

\vskip5pt

For a class $\mathcal U$ of objects of category $\mathcal A$,
a morphism $f$ of  $\mathcal A$ is said to be {\it $\Hom_{\mathcal{A}}(\mathcal{U},-)$-epic},
provided that $\Hom_{\mathcal{A}}(U, f)$ is epic for any $U\in \mathcal U$.

\vskip5pt

\begin{thm}\label{mainthm} \ Let $\mathcal{A}$ be a weakly idempotent complete additive category.
If  $(\CoFib, \Fib, \Weq)$ is a fibrant model structure on $\mathcal A$, then
$(\CoFib, \TFib)$ is a fibrantly weak factorization system on $\mathcal A$, and
$\Fib$ and $\TCoFib$ are given as in $(1.1)$.

\vskip5pt

Conversely, if $(\CoFib, \TFib)$ is a fibrantly weak factorization system on $\mathcal A$, then $(\CoFib, \Fib, \Weq)$ is a fibrant model structure on $\mathcal A$,
where $\Fib$ and $\Weq$ are given as in $(1.1)$.
\end{thm}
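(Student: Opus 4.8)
The plan is to prove the two directions of the equivalence separately, using Theorem~\ref{fibrant} to recognize what the data of a fibrant model structure looks like. For the forward direction, I would first invoke Theorem~\ref{fibrant}: since the model structure is fibrant, $\TCoFib$ and $\Fib$ are already given by the formulas in (1.1), so the only real content is that $(\CoFib,\TFib)$ is a fibrantly weak factorization system. That $(\CoFib,\TFib)$ is a weak factorization system is a standard fact about model structures (cofibrations have the LLP with respect to trivial fibrations, and every map factors as a cofibration followed by a trivial fibration), so condition~(1) of Definition~\ref{FWFS} is immediate. Condition~(2) — two-out-of-three for $\TFib=\Fib\cap\Weq$ on composites where the first map and the composite lie in $\TFib$ — follows from the two-out-of-three property of $\Weq$ together with the fact that every object is fibrant (so $\Fib$ contains every map between fibrant objects, hence $g\in\Fib$ automatically). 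For condition~(3), I would show $\mathcal L\cap\mathcal R = \mathcal C\cap\mathcal F\cap\mathcal W = {\rm T}\mathcal C$ (here $\mathcal F$ is everything, so $\mathcal L = \mathcal C$ and $\mathcal R = \mathcal W$ in the notation of (1.1)), and then argue that ${\rm T}\mathcal C$ is contravariantly finite: given any object $A$, factor $0\longrightarrow A$ as a cofibration $0\longrightarrow P$ followed by a trivial fibration $P\longrightarrow A$; since every object is fibrant, $P\longrightarrow A$ being a trivial fibration plus $P$ cofibrant forces $P$ to be cofibrant-fibrant-trivial... — actually one must be a little careful: the right object to build is a ${\rm T}\mathcal C$-precover, obtained by factoring $0 \to A$ or rather using a factorization of $A \xrightarrow{\,\Id\,} A$ suitably; the key point is that a trivial fibration onto $A$ whose source is cofibrant gives a ${\rm T}\mathcal C$-precover, and such factorizations exist. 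This is the step I expect to require the most care.

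For the converse direction, I would start from a fibrantly weak factorization system $(\CoFib,\TFib)$ and define $\mathcal C,\mathcal W,{\rm T}\mathcal C,\Fib,\TCoFib,\Weq$ by (1.1), then verify the model-structure axioms: (a) $\CoFib$ and $\Fib$ are closed under retracts and $\Weq$ satisfies two-out-of-three; (b) the lifting axiom, i.e.\ $(\CoFib\cap\Weq,\Fib)$ and $(\CoFib,\Fib\cap\Weq)$ are lifting pairs; (c) every morphism factors as $\CoFib$ followed by $\Fib\cap\Weq$, and as $\CoFib\cap\Weq$ followed by $\Fib$. The first thing to pin down is that $\CoFib\cap\Weq = \TCoFib$ and $\Fib\cap\Weq = \TFib$ with the definitions from (1.1); the inclusion $\TCoFib\subseteq\CoFib\cap\Weq$ and $\TFib\subseteq\Fib\cap\Weq$ should be checked directly (a splitting monomorphism with cokernel in ${\rm T}\mathcal C$ is a cofibration because $\CoFib={}^{\boxslash}\TFib$ by the weak factorization system, and it is a weak equivalence by definition of $\Weq$; a map in $\TFib$ is $\Hom_{\mathcal A}({\rm T}\mathcal C,-)$-epic by a lifting argument, hence in $\Fib$, and it lies in $\Weq$ as $\TFib\circ\{\text{identities}\}$), and the reverse inclusions are then forced by minimality once the axioms are in place. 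Condition~(2) of Definition~\ref{FWFS} is what makes $\Weq = \TFib\circ\TCoFib$ satisfy two-out-of-three: it is precisely the closure needed to glue and cancel factorizations through trivial (co)fibrations.

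The main obstacle, as in the forward direction's condition~(3), is the interaction between contravariant finiteness of ${\rm T}\mathcal C$ and the factorization axioms — specifically, producing the factorization of an arbitrary $f\colon A\longrightarrow B$ as (trivial cofibration) followed by (fibration). Given $f$, I would take a ${\rm T}\mathcal C$-precover $T\longrightarrow B$ of $B$ (using condition~(3)), form the split monomorphism $A\longrightarrow A\oplus T$ with cokernel $T\in{\rm T}\mathcal C$ (hence a trivial cofibration by (1.1)), and define $A\oplus T\longrightarrow B$ by $(f,\,T\to B)$; the claim is that this second map is $\Hom_{\mathcal A}({\rm T}\mathcal C,-)$-epic precisely because $T\longrightarrow B$ was a ${\rm T}\mathcal C$-precover, so it lies in $\Fib$. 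Verifying that this genuinely factors $f$ and that the two maps have the stated properties — together with the dual-flavoured factorization as (cofibration) followed by (trivial fibration), which comes essentially for free from the underlying weak factorization system — is the technical heart. Throughout I would lean on weak idempotent completeness to guarantee that the relevant cokernels of splitting monomorphisms exist, and on Theorem~\ref{fibrant} to know that the resulting $\Fib$ really does make every object fibrant (since $\Hom_{\mathcal A}({\rm T}\mathcal C,-)$ applied to $X\longrightarrow 0$ is trivially epic for every $X$), closing the loop. The final assertion $\Ho(\mathcal A)\cong\mathcal C/{\rm T}\mathcal C$ is then inherited directly from Theorem~\ref{fibrant} applied to the model structure just constructed.
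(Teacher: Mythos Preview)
Your overall plan follows the paper's approach: the forward direction is essentially Proposition~\ref{fibrantFWFS} plus Theorem~\ref{fibrant}, and the converse direction is the long verification in Subsection~4.3. But there is a concrete error in your forward direction. You write that condition~(2) of Definition~\ref{FWFS} holds because ``every object is fibrant (so $\Fib$ contains every map between fibrant objects, hence $g\in\Fib$ automatically).'' This is false: fibrancy of all objects does \emph{not} imply that every morphism is a fibration. The correct argument (as in the paper's Proposition~\ref{fibrantFWFS}) is that by Theorem~\ref{fibrant}, $\Fib$ consists exactly of the $\Hom_{\mathcal A}({\rm T}\mathcal C,-)$-epic maps; since $gf\in\TFib\subseteq\Fib$ is $\Hom_{\mathcal A}({\rm T}\mathcal C,-)$-epic, so is $g$, whence $g\in\Fib$; combined with $g\in\Weq$ from two-out-of-three, this gives $g\in\TFib$.

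Your discussion of contravariant finiteness of ${\rm T}\mathcal C$ is also off: factoring $0\to A$ as a cofibration followed by a trivial fibration yields a $\mathcal C$-precover, not a ${\rm T}\mathcal C$-precover. You want the other factorization: $0\to A$ as a \emph{trivial} cofibration followed by a fibration gives $0\to T\to A$ with $T\in{\rm T}\mathcal C$ and $T\to A$ a fibration, hence $\Hom_{\mathcal A}({\rm T}\mathcal C,-)$-epic by Lemma~\ref{split}(3), which is the desired ${\rm T}\mathcal C$-precover (the paper simply cites Lemma~\ref{object}(1)). For the converse direction, your $(\TCoFib,\Fib)$-factorization via a ${\rm T}\mathcal C$-precover is exactly the paper's argument, but you substantially underestimate the remaining work: closure of $\Weq$ under retracts and the full two-out-of-three axiom are the most delicate parts and occupy most of Subsection~4.3, requiring the chain of Lemmas~\ref{summand}--\ref{third2outof3}; condition~(2) of Definition~\ref{FWFS} is indeed used there, but the argument is far from a one-liner. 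Finally, the assertion $\Ho(\mathcal A)\cong\mathcal C/{\rm T}\mathcal C$ is not part of Theorem~\ref{mainthm}; that is the last line of Theorem~\ref{fibrant}.
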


The second part of Theorem \ref{mainthm} can be regarded as a constructive result,
in sense that any fibrant model structure on a weakly idempotent complete additive category, can be constructed
from a fibrantly weak factorization system, where $\Fib$ and $\TCoFib$ are explicitly given in $(1.1)$. 
This is sharpen by the following one-one correspondence
between fibrantly weak factorization systems and fibrant model structures.

\begin{thm} \label{correspondence} \ Let $\A$ be a weakly idempotent complete additive category, 
$\Omega$ the class of fibrantly weak factorization systems on $\A$, and $\Gamma$ the class of fibrant model structures on $\A$. Then the map
$$\Phi: \Omega \longrightarrow \Gamma \ \ \ \mbox{given by} \ \ \ (\CoFib, \TFib)\mapsto (\CoFib, \Fib, \Weq)$$
where $\Fib$ and $\Weq$ are given as in $(1.1)$,  and the map $$\Psi: \Gamma\longrightarrow \Omega  \ \ \ \mbox{given by} \ \ \
(\CoFib, \Fib, \Weq)\mapsto (\CoFib, \TFib)$$
give a one-one correspondence between $\Omega$ and $\Gamma$.
\end{thm}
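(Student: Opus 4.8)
The plan is to prove Theorem \ref{correspondence} by reducing it to Theorem \ref{mainthm} and then checking that the two maps $\Phi$ and $\Psi$ are mutually inverse. Theorem \ref{mainthm} already tells us that $\Phi$ lands in $\Gamma$ (the forward direction) and that $\Psi$ lands in $\Omega$ (the first assertion of Theorem \ref{mainthm}), so the only thing left is to verify $\Psi\circ\Phi = \Id_\Omega$ and $\Phi\circ\Psi = \Id_\Gamma$. Both identities will be established by showing that each of the relevant classes of morphisms is determined by the data that is retained under the round trip, so that nothing is lost.

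First I would treat $\Phi\circ\Psi = \Id_\Gamma$. Start with a fibrant model structure $(\CoFib, \Fib, \Weq)$; applying $\Psi$ keeps $(\CoFib, \TFib)$, and applying $\Phi$ produces $(\CoFib, \Fib', \Weq')$ with $\Fib'$ and $\Weq'$ defined by $(1.1)$. The class $\CoFib$ is literally unchanged. For the fibrations: Theorem \ref{fibrant}$(3)$ gives $\Fib = \{f \mid f \text{ is } \Hom_{\mathcal A}({\rm T}\mathcal C, -)\text{-epic}\}$, which is exactly the formula defining $\Fib'$ in $(1.1)$, once one notes that ${\rm T}\mathcal C = \mathcal C\cap\mathcal W$ is computed from the same $\CoFib$ and $\TFib$ in both settings (here we use that $\mathcal W$, the trivial objects, can be read off from $\TFib$ as $\{X \mid X\to 0 \in \TFib\}$, since in a fibrant model structure every object is fibrant and hence $\Weq$-membership of $X\to 0$ is equivalent to $\TFib$-membership). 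Hence $\Fib' = \Fib$. For the weak equivalences: by the two-out-of-three property and the fact that in a fibrant model structure every weak equivalence factors as a trivial cofibration followed by a trivial fibration (the functorial factorization in $\CoFib\cap\Weq$ and $\Fib\cap\Weq = \TFib$), one gets $\Weq = \TFib\circ\TCoFib$, which is the definition of $\Weq'$. One must also invoke Theorem \ref{mainthm}'s assertion that $\TCoFib$ itself is given by the splitting-monomorphism formula in $(1.1)$, so that $\TCoFib$ is the same class in both places. Therefore $\Weq' = \Weq$ and the round trip is the identity.

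Next, $\Psi\circ\Phi = \Id_\Omega$. Start with a fibrantly weak factorization system $(\CoFib, \TFib)$; applying $\Phi$ gives the fibrant model structure $(\CoFib, \Fib, \Weq)$ of Theorem \ref{mainthm}, and applying $\Psi$ returns $(\CoFib, \TFib')$ where $\TFib' = \Fib\cap\Weq$ computed \emph{in that model structure}. The class $\CoFib$ is again unchanged, so everything comes down to showing $\Fib\cap\Weq = \TFib$. The inclusion $\TFib\subseteq\Fib\cap\Weq$ should follow because: every $f\in\TFib$ is $\Hom_{\mathcal A}({\rm T}\mathcal C,-)$-epic (a trivial fibration has the right lifting property against cofibrations, hence in particular against $0\to U$ for $U\in{\rm T}\mathcal C\subseteq\mathcal C$), so $f\in\Fib$; and $f\in\TFib = \Id\circ\TFib\subseteq\TFib\circ\TCoFib = \Weq$ since identities are trivial cofibrations. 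The reverse inclusion $\Fib\cap\Weq\subseteq\TFib$ is the crux: given $f\in\Fib\cap\Weq$, write $f = t\,c$ with $t\in\TFib$, $c\in\TCoFib$ (a splitting mono with cokernel in ${\rm T}\mathcal C$); one then has to show $f\in\TFib$. Since $c$ is a splitting monomorphism, write a retraction; using that $f$ is $\Hom_{\mathcal A}({\rm T}\mathcal C,-)$-epic and that $\Coker c\in{\rm T}\mathcal C$, one produces a section-type lift showing $f$ is a retract of $t$ in the arrow category, whence $f\in\TFib$ because $\TFib$ is the right class of a weak factorization system and is closed under retracts. This retract argument — extracting $f$ as a retract of $t$ from the factorization data, using precisely the $\Hom({\rm T}\mathcal C,-)$-epic hypothesis to split off the cokernel of $c$ — is the step I expect to be the main obstacle, and it is presumably already packaged inside the proof of Theorem \ref{mainthm}; here I would cite it rather than redo it. With $\Fib\cap\Weq = \TFib$ established, $\Psi\circ\Phi$ is the identity, and combined with the previous paragraph this proves that $\Phi$ and $\Psi$ are mutually inverse bijections.
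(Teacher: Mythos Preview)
Your proposal is correct and follows essentially the same approach as the paper. The paper's proof is just a terse version of yours: it cites Theorem \ref{mainthm} for well-definedness of $\Phi$ and $\Psi$, invokes Lemma \ref{intersection2} (the identity $\TFib = \Fib\cap\Weq$, which is exactly your retract argument) for $\Psi\circ\Phi = \Id_\Omega$, and uses Theorem \ref{fibrant} together with $\Weq = \TFib\circ\TCoFib$ for $\Phi\circ\Psi = \Id_\Gamma$.
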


This construction of fibrant model structures is on weakly idempotent complete additive categories. Even on exact categories, 
they are not exact model structures, 
and $(\mathcal C, {\rm T}\mathcal F)$ and $({\rm T}\mathcal C, \mathcal F)$ are not cotorsion pairs, in general. See Example \ref{examp}.

\subsection{The organization} In Section 2 we recall basics on localization of categories,  model structures and weak factorization systems, which will be used throughout the paper.

\vskip5pt

The homotopy category of a model structure on a weakly idempotent complete additive category is studied, and Theorems \ref{HoCcapF} and \ref{hoA} are proved, in Section 3.

\vskip5pt

Section 4 is devoted to the properties of fibrantly weak factorization systems and the proofs of Theorems \ref{fibrant}, \ref{mainthm} and \ref{correspondence}.

\vskip5pt

In Section 5, applying Theorem \ref{mainthm} to two special fibrant model structures, we rediscover $\omega$-model structures and $\mathcal W$-model structures,
respectively given by A. Beligiannis and I. Reiten [BR], and by A. Beligiannis [Bel].
Relationships among the both kinds of model structures and exact (respectively, abelian) model structures are discussed,
and various examples are listed.

\vskip5pt

Finally, the dual version of Section 4 are stated in Section 6.

\section{\bf Preliminaries}

\subsection{Splitting short exact sequences in additive categories} We need often the following well-known fact.

\begin{defnfact} \label{defnfact} \  A morphism sequence
$0\longrightarrow A\stackrel f \longrightarrow B\stackrel g \longrightarrow C\longrightarrow 0$ in an additive category $\mathcal A$ is
a splitting short exact sequence, provided that the following equivalent conditions are satisfied:

 \vskip5pt

  $(1)$ \ $f$ is a splitting monomorphism, and $g$ is the cokernel of $f$.

  \vskip5pt

  $(2)$ \ $g$ is a splitting epimorphism, and $f$ is the kernel of $g$.

  \vskip5pt

  $(3)$ \ There are morphisms  $f': B\longrightarrow A$ and $g':C\longrightarrow B$ such that $$f'f={\rm Id}_A,  \ \ gg'= {\rm Id}_C, \ \ ff'+g'g={\rm Id}_B.$$

  \vskip5pt

  $(4)$ \ There is a commutative diagram
  \[
  \xymatrix{
  0\ar[r]& A\ar@{=}[d]\ar[r]^-{f}&B\ar[d]^-{{\left(\begin{smallmatrix} f'\\g\end{smallmatrix}\right)}}\ar[r]^-{g}\ar[r]&C\ar@{=}[d]\ar[r]&0\\
  0\ar[r]& A\ar[r]^-{\left(\begin{smallmatrix} 1\\0\end{smallmatrix}\right)}&A\oplus C\ar[r]^-{(0, 1)}\ar[r]&C\ar[r]&0
  }
  \]
  where  $\left(\begin{smallmatrix} f'\\g\end{smallmatrix}\right)$ is an  isomorphism.

\vskip5pt

If this is the case, then any morphism $f': B\longrightarrow A$ with $f'f={\rm Id}_A$ has the kernel and $\Ker f' = \Coker f;$
and any morphism $g': C\longrightarrow B$ with $gg'= {\rm Id}_C$ has the cokernel and $\Coker g' = \Ker g$.
\end{defnfact}

\subsection{Additive quotient categories}
Let $\mathcal A$ be an additive category, and $\mathcal U$ a full additive subcategory (i.e., a full subcategory closed under finite coproducts).
Quotient category $\mathcal A/\mathcal U$ has the same objects as $\mathcal A$, and
\[
\Hom_{\mathcal A/\mathcal U}(X,Y)=\Hom_{\mathcal A}(X,Y)/\Hom_{\mathcal A}(X, \mathcal U, Y)
\]
where $\Hom_{\mathcal A}(X, \mathcal U, Y)$ is the subgroup $\{f\in \Hom_{\mathcal A}(X,Y) \ | \ f \ \text{factors \ through\ object \ in \ } \mathcal U\}$.
Then
$\mathcal A/\mathcal U$ is an additive category with the canonical additive functor $P: \mathcal A\longrightarrow \mathcal A/\mathcal U$,
which sends objects in $\mathcal U$ to the zero object, and has the universal property with this respect. That is, If
$Q: \mathcal A\longrightarrow \mathcal B$ is an additive functor also sending objects in $\mathcal U$ to the zero object, then
there exists a unique additive functor $\bar Q: \mathcal A/\mathcal U \longrightarrow \mathcal B$ such that $Q = \bar Q\circ P$.
This amounts that $P$ sends morphisms in $\Hom_{\mathcal A}(X, \mathcal U, Y)$ to zero morphisms, and has the universal property with this respect.
If $\mathcal U$ is closed under direct summands, then object $X \cong 0$ in $\mathcal A/\mathcal U$ if and only if $X\in \mathcal U$.

\vskip5pt

\subsection{Localization of categories}  Let $\mathcal S$ be a class of morphisms of a category $\mathcal A$.
By definition the localization category ${\mathcal A}[\mathcal S^{-1}]$ of $\mathcal A$ with respect to $\mathcal S$
is a category together with localization functor $L:\mathcal A\longrightarrow {\mathcal A}[\mathcal S^{-1}]$,
which sends morphisms in $\mathcal S$ to isomorphisms, and has the universal property with this respect.

\vskip5pt

Recall the construction of ${\mathcal A}[\mathcal S^{-1}]$,
via the path category ${\rm Path}Q(\mathcal A, \mathcal S)$ of quiver $Q(\mathcal A, \mathcal S)$,
given by P. Gabriel and M. Zisman \cite{GZ}. See also H. Krause \cite{K}.
Vertices  of $Q(\mathcal A, \mathcal S)$ are the objects of $\mathcal A$.
For two vertices $X$ and $Y$,
the set of arrows of $Q(\mathcal A, \mathcal S)$ from $X$ to $Y$ is the disjoint union
$$\Hom_\mathcal A(X, Y) \ \overset\bullet \bigcup \ \mathcal S(Y, X)$$
where $\mathcal S(Y, X)$ is the set of morphisms
$g: Y\longrightarrow X$ in $\mathcal S$. Thus, if $g: Y\longrightarrow X$ is a morphism $\mathcal S$, then in  $Q(\mathcal A, \mathcal S)$ one has
not only an arrow $g: Y\longrightarrow X$, but also a new added arrow $X\longrightarrow Y$, denoted by $g^{-1}: X\longrightarrow Y$. Also, for each vertex $X$ there is a loop
${\rm Id}_X: X\longrightarrow X$ in $Q(\mathcal A, \mathcal S)$, corresponding to the identity morphism of $X$.
By definition, objects of path category ${\rm Path}Q(\mathcal A, \mathcal S)$ are vertices of $Q(\mathcal A, \mathcal S)$
(thus, objects of $\mathcal A$); and morphisms of ${\rm Path}Q(\mathcal A, \mathcal S)$ from $X$ to $Y$ are paths from $X$ to $Y$ in $Q(\mathcal A, \mathcal S)$.
Then ${\mathcal A}[\mathcal S^{-1}]$ is
exactly the quotient category of ${\rm Path}Q(\mathcal A, \mathcal S)$, with respect to
the equivalent relations generated by the obvious relations:

\vskip5pt

(1) \ For each object $X$ of $\mathcal A$,  one has ${\rm Id}_X\sim e_X,$ where ${\rm Id}_X$ is the morphism
of ${\rm Path}Q(\mathcal A, \mathcal S)$ given by the loop at $X$, and $e_X$ is the morphism given by the path of length $0$ at $X$;

\vskip5pt

(2) \ For composable morphisms $s: X\longrightarrow Y$ and $t: Y\longrightarrow Z$ of $\mathcal A$,  one has
$t\circ s \sim ts,$ where $t\circ s$ is the morphism  in ${\rm Path}Q(\mathcal A, \mathcal S)$ given by the concatenation of arrows $s$ and $t$, and $ts$ is the morphism given by arrow $ts$.

\vskip5pt

(3) \ For any morphism \ $g: Y\longrightarrow X$ in $\mathcal S$, one has $g\circ g^{-1} \sim {\rm Id}_X$ and $g^{-1}\circ g \sim {\rm Id}_Y$.

\vskip5pt Thus, if $s\in S$ and $t\in {\rm Mor}(\mathcal A)$ such that $ts\in S$ (respectively, $st\in \mathcal S$),
then $(ts)^{-1}\circ t=s^{-1}$ (respectively, $t \circ (st)^{-1}=s^{-1}$) in ${\mathcal A}[S^{-1}]$.

\vskip5pt

In conclusion, localization category ${\mathcal A}[\mathcal S^{-1}]$ has the same objects as $\mathcal A$,
and a morphism $\delta: X\longrightarrow Y$ in ${\mathcal A}[\mathcal S^{-1}]$ is an equivalence class. If $\mathcal S$ is closed under compositions,
then $\delta$ can be written as $\delta={g_n}^{-1}f_n\cdots {g_1}^{-1}f_1$, where
\[
\xymatrix{
X\ar[r]^-{f_1}& A_1& B_1\ar[l]_-{g_1}\ar[r]^-{f_2} & A_2& B_2\ar[l]_-{g_2}\ar[r]^-{f_3}& \ldots \ar[r]^-{f_{n}}& A_{n}& B_n=Y\ar[l]_-{g_{n}}
}
\]
is a sequence of morphisms of $\mathcal A$ and $g_1,\ldots, g_n\in \mathcal S$. The localization functor $L: \mathcal A \longrightarrow {\mathcal A}[\mathcal S^{-1}]$
is the identity on objects, and sends morphism $f$ of $\mathcal A$ to the equivalence class where $f$ lies in.

\begin{lem} \label{locilzationasquotient} \ {\rm ([K, Lemmas 2.2.1,  2.2.2])} \ Let $\mathcal A$ be an additive category.

\vskip5pt

$(1)$ \  Let $\mathcal S$ be a class of morphisms of $\mathcal A$ such that ${\rm Id}_X\in\mathcal S$ for every object $X\in \mathcal A$,
and that $\mathcal S$ is closed under finite coproducts.
Then ${\mathcal A}[\mathcal S^{-1}]$ is an additive category,
and $L: \mathcal A \longrightarrow {\mathcal A}[\mathcal S^{-1}]$ is an additive functor.

\vskip5pt

$(2)$ \  Let $\mathcal U$ be a full additive subcategory of $\mathcal A$. Set
$$\mathcal S = \{f\in {\rm Mor}\mathcal A \ | \ \bar f \ \mbox{is an isomorphism in} \ \mathcal A/\mathcal U\}.$$
Then the canonical functor $\mathcal A\longrightarrow \mathcal A/\mathcal U$ induces an isomorphism
${\mathcal A}[\mathcal S^{-1}]\cong \mathcal A/\mathcal U$ of  additive categories.

\end{lem}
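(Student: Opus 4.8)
The plan is to establish the two parts in turn; the additive structure on $\mathcal{A}[\mathcal{S}^{-1}]$ produced in $(1)$ makes $(2)$ a formal consequence of the universal properties of the localization and of the additive quotient.

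For $(1)$ I would first check that $L(0)$ is a zero object of $\mathcal{A}[\mathcal{S}^{-1}]$ and that $L$ carries the biproduct $X\oplus Y$ of $\mathcal A$, together with its injections $i_X,i_Y$ and projections $p_X,p_Y$, to a biproduct of $L(X)$ and $L(Y)$ in $\mathcal{A}[\mathcal{S}^{-1}]$. This is the technical heart, and the only place where the two hypotheses on $\mathcal{S}$ are used: given morphisms $\delta\colon X\to Z$ and $\epsilon\colon Y\to Z$ of $\mathcal{A}[\mathcal{S}^{-1}]$, represent them by paths in the quiver $Q(\mathcal{A},\mathcal{S})$; using $\Id_W\in\mathcal{S}$ for all $W$, insert forward and backward identity arrows to bring the two paths to a common shape (a common pattern of forward arrows of $\mathcal{A}$ and backward arrows from $\mathcal{S}$); since $\mathcal{S}$ is closed under finite coproducts, the termwise biproduct of the two reshaped paths is again such a path, now from $X\oplus Y$ to $Z\oplus Z$, and post-composing with the codiagonal $Z\oplus Z\to Z$ of $\mathcal{A}$ yields a morphism $X\oplus Y\to Z$ of $\mathcal{A}[\mathcal{S}^{-1}]$; a direct verification in the path-category model of \cite{GZ} and \cite{K} shows it is independent of all choices, restricts along $L(i_X),L(i_Y)$ to $\delta,\epsilon$, and is the unique such morphism, and dualizing (noting $\mathcal{S}^{\mathrm{op}}$ satisfies the same hypotheses in $\mathcal{A}^{\mathrm{op}}$ and $(\mathcal{A}[\mathcal{S}^{-1}])^{\mathrm{op}}\cong\mathcal{A}^{\mathrm{op}}[(\mathcal{S}^{\mathrm{op}})^{-1}]$) gives the product structure. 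Thus $\mathcal{A}[\mathcal{S}^{-1}]$ has finite biproducts, hence is semiadditive with bilinear composition, and since $L$ preserves the biproduct data it follows that $L(f+g)=L(f)+L(g)$, i.e. $L$ is additive. Finally each hom-monoid is a group: for $\delta\colon X\to Y$, bilinearity of composition and $L(\Id_Y)+L(-\Id_Y)=L(0_{Y,Y})=0$ give $\delta+L(-\Id_Y)\circ\delta=(L(\Id_Y)+L(-\Id_Y))\circ\delta=0$, so $L(-\Id_Y)\circ\delta$ is an additive inverse of $\delta$; hence $\mathcal{A}[\mathcal{S}^{-1}]$ is an additive category and $L$ an additive functor.

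For $(2)$ I would first observe that $\mathcal{S}=\{f\mid\bar f\ \text{is an isomorphism in}\ \mathcal{A}/\mathcal{U}\}$ satisfies the hypotheses of $(1)$, since $P(\Id_X)=\Id_{P(X)}$ is an isomorphism and $P$ is additive while a finite biproduct of isomorphisms is one; hence $\mathcal{A}[\mathcal{S}^{-1}]$ is additive. As $P$ inverts every morphism of $\mathcal{S}$ by definition, the universal property of $L$ gives a unique $\bar P\colon\mathcal{A}[\mathcal{S}^{-1}]\to\mathcal{A}/\mathcal{U}$ with $\bar P\circ L=P$, and $\bar P$ preserves finite biproducts (as $L$ and $P=\bar P\circ L$ do), hence is additive. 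The key point is that each $U\in\mathcal{U}$ becomes a zero object in $\mathcal{A}[\mathcal{S}^{-1}]$: since $\Hom_{\mathcal{A}/\mathcal{U}}(U,U)=0$, the zero morphism $0_U\colon U\to U$ is the identity of $U$, hence an isomorphism, in $\mathcal{A}/\mathcal{U}$; so $0_U\in\mathcal{S}$ and $L(0_U)$ is invertible, while $L(0_U)=0$ by additivity of $L$, forcing $\Id_{L(U)}=0$. Consequently $L$ annihilates every morphism factoring through an object of $\mathcal{U}$, so by the universal property of $\mathcal{A}/\mathcal{U}$ there is a unique additive functor $\bar L\colon\mathcal{A}/\mathcal{U}\to\mathcal{A}[\mathcal{S}^{-1}]$ with $\bar L\circ P=L$. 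Then $(\bar P\circ\bar L)\circ P=\bar P\circ L=P$ and $(\bar L\circ\bar P)\circ L=\bar L\circ P=L$, so by the uniqueness clauses of the two universal properties $\bar P\circ\bar L=\Id_{\mathcal{A}/\mathcal{U}}$ and $\bar L\circ\bar P=\Id_{\mathcal{A}[\mathcal{S}^{-1}]}$; thus $\bar P$ is an isomorphism of additive categories, which is the assertion. The step I expect to be the main obstacle is the zigzag-level verification in $(1)$ that $L(0)$ is a zero object and that $L(X\oplus Y)$ is a biproduct in $\mathcal{A}[\mathcal{S}^{-1}]$ — the only genuinely computational point, everything downstream being formal — and if desired this may simply be quoted from \cite{GZ} or \cite{K}.
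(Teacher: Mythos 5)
Your proposal is correct and follows essentially the argument of the cited source: the paper itself offers no proof of this lemma but defers to [K, Lemmas 2.2.1, 2.2.2], and your treatment of $(1)$ (padding zigzags with identities, taking termwise biproducts using closure of $\mathcal S$ under finite coproducts, composing with the codiagonal, then upgrading the semiadditive structure to an additive one via $L(-\Id_Y)$) together with your treatment of $(2)$ (playing the two universal properties against each other after checking that objects of $\mathcal U$ become zero objects because $0_U\in\mathcal S$) is exactly the standard proof reconstructed. The one genuinely computational step you defer — the well-definedness and uniqueness verification for the biproduct in the path-category model — is precisely what the citation is for, so deferring it is appropriate.
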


\subsection{Model structures} Morphism $g: A'\longrightarrow B'$ is a retract of morphism $f: A\longrightarrow B$, provided that
there is a commutative diagram
\[
\xymatrix@R=0.5cm{
    A'\ar[r]^{\varphi_1}\ar[d]_g & A\ar[r]^{\psi_1}\ar[d]_{f} & A'\ar[d]_{g} \\
    B'\ar[r]^{\varphi_2} & B\ar[r]^{\psi_2} & B'
}
\]
such that $\psi_1 \varphi_1 ={\rm Id}_{A'}$ and $\psi_2 \varphi_2 = {\rm Id}_{B'}$.

\vskip5pt

For $f:A\longrightarrow B$ and $g:C\longrightarrow D$, one writes $f\ \square\ g$ provided that for any commutative diagram
\begin{equation}
\xymatrix@R=0.5cm{
A\ar[r]^-{u}\ar[d]_-{f} & C \ar[d]^-{g}\\
B\ar[r]^-v & D
}
\end{equation}
there is a lifting $h:B\longrightarrow C$ such that $hf=u$ and $gh=v$. In this case, $f$ is said to have {\it the left lifting property} with respect to $g$,
and  $g$ is said to have {\it the right lifting property} with respect to $f$.
For a class $\mathcal S$ of morphisms, denoted by ${\rm LLP}(\mathcal S)$
(${\rm RLP}(\mathcal S)$, respectively) the class of morphisms $f$ such that $f\ \square\ s$ ($s\ \square\ f$, respectively) for all $s\in \mathcal S$.

\begin{defn} \label{ms} \  {\rm ([Q1], [Q2])} \ A model structure on a category $\mathcal M$ is a triple
$(\CoFib$, $\Fib$, $\Weq)$ of classes of morphisms,
where the morphisms in the three classes are respectively called {\it cofibrations, fibrations, and weak equivalences},
satisfying the following axioms:

\vskip5pt

{\bf Two out of three axiom:} \ Let $X\xlongrightarrow{f}Y\xlongrightarrow{g}Z$ be morphisms in $\mathcal M$. If two of the morphisms $f, \ g, \ gf$ are weak equivalences,
then so is the third one.

\vskip5pt

{\bf Retract axiom:}  \  $\CoFib$, $\Fib$, $\Weq$ are closed under retract.

\vskip5pt

{\bf Lifting axiom:} \  Let $f\in \CoFib$ and $g\in \Fib$. If either $f\in \Weq $ or $g\in \Weq$, then $f\ \square\ g$.

\vskip5pt

{\bf Factorization axiom:}  \ Any morphism $f: X\longrightarrow Y$ admits factorizations \ $f=pi$ and  \ $f=qj$, where \ $i\in \CoFib\cap \Weq$, \ $p\in \Fib$,  \ $j\in \CoFib$, and \ $q\in \Fib\cap \Weq$.
\end{defn}

The morphisms in $\TCoFib: =\CoFib\cap \Weq$ and $\TFib: = \Fib\cap \Weq$ are called {\it trivial cofibrations} and {\it trivial fibrations}, respectively.

\vskip5pt

For a model structure $(\CoFib$, $\Fib$, $\Weq)$ on a pointed category $\mathcal M$,
an object $X$ is {\it cofibrant} if $0\longrightarrow X$ is a cofibration;  it is {\it fibrant} if $X\longrightarrow 0$ is a fibration;
and it is {\it trivial} if $0 \longrightarrow X $ is a weak equivalence, or, equivalently,
$X\longrightarrow 0$ is a weak equivalence.
An object is {\it trivially cofibrant} (respectively, {\it trivially fibrant}) if it is both trivial and cofibrant (respectively, both trivial and fibrant).
Denote the class of cofibrant objects, fibrant objects, trivial objects, trivially cofibrant objects and trivially fibrant objects,
by $\mathcal C$, \ $\mathcal F$, \ $\mathcal W$, \ ${\rm T}{\mathcal C} = \mathcal C\cap \mathcal W$ and ${\rm T}{\mathcal F} = \mathcal F\cap \mathcal W$,  respectively.

\vskip5pt

We need the following fundamental results. For proofs one can find for examples in [Q1], [Q2], [H1], [BR].

\begin{lem}\label{morphism} \ Let $(\CoFib$, $\Fib$,  $\Weq)$ be a model structure on category $\mathcal M$. Then

\vskip5pt

$(1)$ \ \ Any two classes of $\CoFib$, \  $\Fib$, \  $\Weq$ determine the third uniquely. Namely,
$$\CoFib = {\rm LLP}(\TFib), \ \ \ \TCoFib = {\rm LLP}(\Fib)$$ and
$$\Fib = {\rm RLP}(\TCoFib), \ \ \ \TFib = {\rm RLP}(\CoFib), \ \ \ \Weq = \TFib\circ \TCoFib.$$

\vskip5pt

$(2)$ \ \ $\CoFib$, \ $\Fib$, \ $\TCoFib$ and \ $\TFib$ are closed under compositions.

\vskip5pt

$(3)$ \ Isomorphisms are  cofibrations, fibrations, and weak equivalences.

 \vskip5pt

$(4)$ \ $\CoFib$ and $\TCoFib$ are closed under push-outs, i.e., if square $(2.1)$ is a push-out square
and $f\in \CoFib$ $($respectively, $f\in \TCoFib$$)$, then $g\in \CoFib$ $($respectively, $g\in \TCoFib$$)$.

\vskip5pt

In particular, if $\mathcal M$ is pointed and $f\in \CoFib$ $($respectively, $f\in \TCoFib$$)$,
and if $f$ has cokernel, then $\Coker f\in \mathcal C$
$($respectively, $\Coker f\in {\rm T}\mathcal C$$)$.

\vskip5pt

$(5)$ \ $\Fib$ and $\TFib$ are closed under pull-backs, i.e., if square $(2.1)$ is a pull-back square and
$g\in \Fib$ $($respectively, $g\in \TFib$$)$, then $f\in \Fib$ $($respectively, $f\in \TFib$$)$.

\vskip5pt

In particular, if $\mathcal M$ is pointed and $f\in \Fib$ $($respectively, $f\in \TFib$$)$,
and if $f$ has kernel, then $\Ker f\in \mathcal F$
$($respectively, $\Ker f\in {\rm T}\mathcal F$$)$.

\vskip5pt

$(6)$ \ Assume that $\mathcal M$ is additive.
If \ $X\in\mathcal C$ $($respectively, $X\in{\rm T}\mathcal C)$, then any splitting monomorphism with cokernel $X$ belongs to $\CoFib$
$($respectively, $\TCoFib$$)$.

\vskip5pt

$(7)$ \ Assume that $\mathcal M$ is additive.
If \ $X\in\mathcal F$ $($respectively, $X\in{\rm T}\mathcal F)$, then any splitting epimorphism with kernel $X$ belongs to $\Fib$
$($respectively, $\TFib$$)$.

\vskip5pt

$(8)$ \ \ If $\mathcal M$ is additive, then $\CoFib$, \ $\Fib$, \ $\TCoFib$, \ $\TFib$ and \ $\Weq$ are closed under finite coproducts.\end{lem}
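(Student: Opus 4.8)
The plan is to regard this as a package of standard facts resting on one engine, the \emph{retract argument}, together with the formal stability properties of classes of the form ${\rm LLP}(\mathcal S)$ and ${\rm RLP}(\mathcal S)$. For $(1)$ I would first establish the four lifting-class identities. For $\CoFib={\rm LLP}(\TFib)$: the inclusion ``$\subseteq$'' is the lifting axiom, and for ``$\supseteq$'' one takes $f\in{\rm LLP}(\TFib)$, writes $f=qj$ with $j\in\CoFib$ and $q\in\Fib\cap\Weq=\TFib$ by the factorization axiom, applies $f\ \square\ q$ to the square with horizontal rows $j$ and $\Id$ to get a lifting $h$, and observes that $h$ (together with $\Id_A$ and $q$) exhibits $f$ as a retract of $j$, so $f\in\CoFib$ by the retract axiom. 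The identities $\TCoFib={\rm LLP}(\Fib)$, $\Fib={\rm RLP}(\TCoFib)$ and $\TFib={\rm RLP}(\CoFib)$ come out the same way (the dual arguments for the last two, each using the appropriate factorization, and using for the first that both $\CoFib$ and $\Weq$ are retract-closed). For $\Weq=\TFib\circ\TCoFib$: ``$\supseteq$'' is two-out-of-three for a composite of two weak equivalences, and for ``$\subseteq$'' one factors $w\in\Weq$ as $w=qj$ with $j\in\CoFib$, $q\in\TFib$, and two-out-of-three forces $j\in\Weq$, i.e.\ $j\in\TCoFib$. ``Any two classes determine the third'' then follows: $\CoFib$ and $\Fib$ give $\TCoFib={\rm LLP}(\Fib)$ and $\TFib={\rm RLP}(\CoFib)$, hence $\Weq=\TFib\circ\TCoFib$; then $\CoFib,\Weq$ give $\Fib={\rm RLP}(\CoFib\cap\Weq)$, and $\Fib,\Weq$ give $\CoFib={\rm LLP}(\Fib\cap\Weq)$.

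Given $(1)$, part $(2)$ is immediate, since a class ${\rm LLP}(\mathcal S)$ (resp.\ ${\rm RLP}(\mathcal S)$) is always closed under composition by splicing two consecutive liftings. For $(3)$, an isomorphism lies in ${\rm LLP}(\mathcal S)\cap{\rm RLP}(\mathcal S)$ for every $\mathcal S$, hence is a cofibration and a fibration; and an isomorphism $f$ is a weak equivalence because, factoring $f=qj$ with $j\in\CoFib$ and $q\in\TFib$, a routine diagram built from $j$ and $f^{-1}q$ exhibits $f$ as a retract of $q\in\Weq$, so $f\in\Weq$ by the retract axiom. For $(4)$ and $(5)$ I would use the classical fact that an ${\rm LLP}$-class is stable under cobase change and an ${\rm RLP}$-class under base change: a test square against some $s\in\TFib$ for the pushed-out leg of the square restricts along the pushout to a test square for $f$, whose lifting combines with the pushout universal property to lift the original square; dually for pull-backs. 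The ``in particular'' clauses follow because in an additive category $\Coker f$, when it exists, is precisely the pushout of $f$ along $A\longrightarrow 0$, and $\Ker f$ the pullback of $f$ along $0\longrightarrow B$.

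For $(6)$, let $f\colon A\longrightarrow B$ be a splitting monomorphism with $\Coker f\cong X$; Definition-Fact \ref{defnfact}$(4)$ identifies $f$, up to composition with an isomorphism, with the biproduct inclusion $A\longrightarrow A\oplus X$, which is the pushout of $0\longrightarrow X$ along $0\longrightarrow A$. If $X\in\mathcal C$, then $0\longrightarrow X\in\CoFib$, so $(4)$ gives $A\longrightarrow A\oplus X\in\CoFib$, whence $f\in\CoFib$ since $\CoFib$ is closed under composition with isomorphisms by $(2)$ and $(3)$; the ${\rm T}\mathcal C$ statement is the same with $\TCoFib$ in place of $\CoFib$. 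Part $(7)$ is dual, using kernels and splitting epimorphisms. For $(8)$, since each of $\CoFib,\Fib,\TCoFib,\TFib$ is an ${\rm LLP}$- or ${\rm RLP}$-class by $(1)$, it is closed under finite coproducts: a test square for $f_1\oplus\cdots\oplus f_n$ decomposes, via the coproduct inclusions and projections, into test squares for the $f_i$, whose liftings assemble into one for the coproduct; and $\Weq=\TFib\circ\TCoFib$ is coproduct-closed because the coproduct of factorizations $w_i=p_ij_i$ with $p_i\in\TFib$, $j_i\in\TCoFib$ is again of that form.

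I expect no serious obstacle: the only real content is the retract argument in $(1)$, on which everything else rests. The points needing care are the order of the logical dependencies — in particular, that isomorphisms are weak equivalences must be deduced from the factorization and retract axioms rather than directly from two-out-of-three — and, in the additive parts $(4)$--$(7)$, the identification of cokernels and kernels with pushouts and pullbacks along the zero object, and of splitting monomorphisms (epimorphisms) with biproduct inclusions (projections) via Definition-Fact \ref{defnfact}.
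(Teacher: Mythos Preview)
The paper does not prove this lemma: it is stated as a collection of ``fundamental results'' with a blanket reference to \cite{Q1}, \cite{Q2}, \cite{H1}, \cite{BR} for proofs. Your proposal supplies exactly the standard arguments one finds in those sources, organized around the retract argument for $(1)$ and the formal closure properties of ${\rm LLP}/{\rm RLP}$-classes for the rest, and it is correct.

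One cosmetic remark: in your explanation of the ``in particular'' clauses of $(4)$ and $(5)$ you write ``in an additive category'', but the statement only assumes $\mathcal M$ pointed, and indeed $\Coker f$ being the pushout of $f$ along $A\to 0$ (and dually for kernels) holds in any pointed category; your argument goes through unchanged under that weaker hypothesis.
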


\begin{lem}\label{object} \ Let $(\CoFib$, $\Fib$,  $\Weq)$ be a model structure on pointed category $\mathcal M$. Then

\vskip5pt

$(1)$ \  \ $\mathcal C$ and ${\rm T}{\mathcal C}$ are contravariantly finite in $\mathcal M$.

  \vskip5pt

  $(2)$ \  \ $\mathcal F$ and ${\rm T}{\mathcal F}$  are covariantly finite in $\mathcal M$.

  \vskip5pt

  $(3)$ \ \ $\mathcal C$ and ${\rm T}{\mathcal C}$ are closed under finite coproducts, direct summands, and direct productands.
  $($Object $X$ is a productand, if there is an object $Y$ such that product $X\prod Y$ exists.$)$

 \vskip5pt

  $(4)$ \ \ $\mathcal F$ and ${\rm T}{\mathcal F}$ are closed under finite products, direct summands, and direct productands.

\vskip5pt

$(5)$ \ \ If $\mathcal M$ is additive, then $\mathcal W$ is closed under finite coproducts and direct summands.
\end{lem}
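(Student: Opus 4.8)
The plan is to derive every item directly from the model-structure axioms together with Lemma \ref{morphism}, treating the two finiteness statements $(1)$--$(2)$ via the factorization and lifting axioms, and the closure statements $(3)$--$(5)$ via closure under push-outs/pull-backs, compositions, and retracts. Throughout I use that an object is cofibrant/fibrant/trivial iff the relevant structure map to or from $0$ lies in the corresponding class.

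For $(1)$, given an object $M$ I apply the factorization axiom to $0\longrightarrow M$ in its two forms. Writing $0\longrightarrow M$ as $0\xlongrightarrow{j} C\xlongrightarrow{q} M$ with $j\in\CoFib$ and $q\in\TFib$ gives $C\in\mathcal C$; and for any $C'\in\mathcal C$ and any $g\colon C'\longrightarrow M$, the commutative square with left edge $0\longrightarrow C'\in\CoFib$, right edge $q\in\TFib$, top edge $0\longrightarrow C$ and bottom edge $g$ admits a lift $h\colon C'\longrightarrow C$ with $qh=g$ by the lifting axiom. Hence $q\colon C\longrightarrow M$ is a right $\mathcal C$-approximation, so $\mathcal C$ is contravariantly finite. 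Writing instead $0\longrightarrow M$ as $0\xlongrightarrow{i} T\xlongrightarrow{p} M$ with $i\in\TCoFib$ and $p\in\Fib$ gives $T\in\mathcal C\cap\mathcal W={\rm T}\mathcal C$; for $T'\in{\rm T}\mathcal C$ one has $0\longrightarrow T'\in\TCoFib$ (a cofibration since $T'$ is cofibrant, a weak equivalence since $T'$ is trivial), so the analogous square built from $0\longrightarrow T'$ and $p$ has a lift by the lifting axiom, and $p$ is a right ${\rm T}\mathcal C$-approximation. Part $(2)$ is the exact dual, applying the factorization axiom to $M\longrightarrow 0$: the factorization $M\xlongrightarrow{i}F\longrightarrow 0$ with $i\in\TCoFib$ and $F\longrightarrow 0\in\Fib$ makes $i$ a left $\mathcal F$-approximation, and $M\xlongrightarrow{j}T\longrightarrow 0$ with $j\in\CoFib$ and $T\longrightarrow 0\in\TFib$ makes $j$ a left ${\rm T}\mathcal F$-approximation, the required factorizations again coming from the lifting axiom applied to the two opposite squares.

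For $(3)$, if $X,Y\in\mathcal C$ and the coproduct $X\sqcup Y$ exists, it is the push-out of $X\longleftarrow 0\longrightarrow Y$, so pushing out the cofibration $0\longrightarrow X$ along $0\longrightarrow Y$ yields a cofibration $Y\longrightarrow X\sqcup Y$ by Lemma \ref{morphism}(4); composing with $0\longrightarrow Y\in\CoFib$ via Lemma \ref{morphism}(2) gives $0\longrightarrow X\sqcup Y\in\CoFib$, i.e. $X\sqcup Y\in\mathcal C$, and replacing $\CoFib$ by $\TCoFib$ throughout does the same for ${\rm T}\mathcal C$. If $X$ is a retract of some $Z\in\mathcal C$ --- in particular if $X$ is a direct summand or a direct productand of $Z$, using the structural split monomorphism $X\longrightarrow Z$ and split epimorphism $Z\longrightarrow X$ --- then $0\longrightarrow X$ is a retract of $0\longrightarrow Z$ (take $\Id_0$ on the upper row and the split mono/epi on the lower row), so $0\longrightarrow X\in\CoFib$ by the retract axiom; when $Z\in{\rm T}\mathcal C$ the retract axiom applied to $\Weq$ as well gives $0\longrightarrow X\in\CoFib\cap\Weq$. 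Part $(4)$ is the dual: a finite product $X\times Y$ of fibrant (resp. trivially fibrant) objects is the pull-back of $X\longrightarrow 0\longleftarrow Y$, so Lemma \ref{morphism}(5) and (2) give $X\times Y\longrightarrow 0\in\Fib$ (resp. $\in\TFib$); closure under direct summands and direct productands again follows from the retract axiom.

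Finally, for $(5)$, if $X,Y\in\mathcal W$ then $0\longrightarrow X$ and $0\longrightarrow Y$ are weak equivalences, and since $\mathcal M$ is additive Lemma \ref{morphism}(8) shows their coproduct $0\cong 0\oplus 0\longrightarrow X\oplus Y$ is a weak equivalence, so $X\oplus Y\in\mathcal W$; and if $X$ is a retract of $Z\in\mathcal W$ then $0\longrightarrow X$ is a retract of $0\longrightarrow Z\in\Weq$, whence $0\longrightarrow X\in\Weq$ by the retract axiom, so $X\in\mathcal W$. I do not expect a genuine obstacle: the whole proof is bookkeeping with the axioms, and the only points needing care are choosing the correct one of the two factorizations to pair with the correct instance of the lifting axiom in $(1)$--$(2)$, and --- since $\mathcal M$ is only assumed pointed in $(1)$--$(4)$ --- recognizing finite coproducts and products as push-outs and pull-backs over the zero object so that Lemma \ref{morphism}(4),(5) applies, together with the (routine) verification that direct summands and direct productands are retracts.
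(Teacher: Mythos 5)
Your proof is correct, and it is the standard argument: the paper itself omits a proof of this lemma, deferring to [Q1], [Q2], [H1], [BR], and your derivation (factorization plus lifting for the (co)approximations in $(1)$--$(2)$, push-out/pull-back and composition stability for coproducts/products, and the retract axiom for summands and productands) is exactly the argument those references use. All the delicate points --- pairing the correct factorization of $0\to M$ with the correct case of the lifting axiom, and realizing summands and productands as retracts in a merely pointed category --- are handled properly.
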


\begin{lem}\label{CCoFib}
Let $(\CoFib$, $\Fib$,  $\Weq)$ be a model structure on pointed category $\mathcal M$, and $f: A\longrightarrow B$ a morphism.

\vskip5pt

$(1)$ \ If $A\in \mathcal C$, $f\in \CoFib$, then $B\in \mathcal C$.

\vskip5pt

$(2)$ \ If $A\in {\rm T}\mathcal C$, $f\in \TCoFib$, then $B\in {\rm T}\mathcal C$.

\vskip5pt

$(3)$ \ If $B\in \mathcal F$, $f\in \Fib$, then $A\in \mathcal F$.

\vskip5pt

$(4)$ \ If $B\in {\rm T}\mathcal F$, $f\in \TFib$, then $A\in {\rm T}\mathcal F$.

\end{lem}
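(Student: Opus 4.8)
The plan is to observe that all four assertions are instances of closure under composition, using only that $\mathcal M$ is pointed. Recall that in a pointed category the zero object $0$ is both initial and terminal; hence for every object $X$ there is a unique morphism $0\longrightarrow X$ and a unique morphism $X\longrightarrow 0$, and for any $f\colon A\longrightarrow B$ the composite of $0\longrightarrow A$ with $f$ is precisely $0\longrightarrow B$, while the composite of $f$ with $B\longrightarrow 0$ is precisely $A\longrightarrow 0$.

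For $(1)$ I would argue as follows: since $A\in\mathcal C$, the morphism $0\longrightarrow A$ lies in $\CoFib$; together with $f\in\CoFib$ and the fact that $\CoFib$ is closed under composition (Lemma \ref{morphism}(2)), the composite $0\longrightarrow A\stackrel{f}{\longrightarrow} B$ lies in $\CoFib$. By the remark above this composite is the unique morphism $0\longrightarrow B$, so $B\in\mathcal C$. Statement $(2)$ is the same argument verbatim with $\CoFib$ replaced by $\TCoFib$, which is likewise closed under composition by Lemma \ref{morphism}(2), whence $0\longrightarrow B\in\TCoFib$ and $B\in{\rm T}\mathcal C$.

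For $(3)$ I would run the dual argument: $B\in\mathcal F$ gives $B\longrightarrow 0\in\Fib$, and with $f\in\Fib$ and closure of $\Fib$ under composition (Lemma \ref{morphism}(2)) the composite $A\stackrel{f}{\longrightarrow} B\longrightarrow 0$, which is the unique morphism $A\longrightarrow 0$, lies in $\Fib$; hence $A\in\mathcal F$. Statement $(4)$ is identical with $\Fib$ replaced by $\TFib$, again closed under composition by Lemma \ref{morphism}(2).

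There is essentially no obstacle here: the proof uses none of the two out of three, retract, lifting, or factorization axioms, only the composition-closure of the four relevant classes (already recorded in Lemma \ref{morphism}(2)) together with the bookkeeping that in a pointed category $0\longrightarrow X$ and $X\longrightarrow 0$ are the unique such morphisms. The only point worth a moment's care is making sure one invokes $\TCoFib$ rather than $\CoFib$ in $(2)$, and $\TFib$ rather than $\Fib$ in $(4)$, since the class $\Weq$ enters the conclusions only through those trivial subclasses.
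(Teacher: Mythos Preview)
Your proof is correct and follows exactly the same approach as the paper: both reduce each assertion to closure under composition (Lemma \ref{morphism}(2)) of the relevant class, composing with the unique morphism $0\longrightarrow A$ or $B\longrightarrow 0$. The paper's proof is just a terser one-line version of what you wrote.
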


\begin{proof} \ All these assertions follow from Lemma \ref{morphism}(2). For example, for $(1)$, by definition $0\longrightarrow A$ is in $\CoFib$. If follows that the composition
$(0\longrightarrow B) =f\circ (0\longrightarrow A)$ is in $\CoFib$.
\end{proof}

For a class $\mathcal U$ of objects of category $\mathcal A$,
a morphism $f$ of  $\mathcal A$ is said to be {\it $\Hom_{\mathcal{A}}(-, \mathcal{U})$-epic},
provided that $\Hom_{\mathcal{A}}(f, U)$ is epic for any $U\in \mathcal U$.

\vskip5pt

\begin{lem}\label{split} \ {\rm (\cite[VIII, Lemma 1.1]{BR})} \ Let $(\CoFib, \Fib, \Weq)$ be a model structure on pointed category $\mathcal M$. Then

\vskip5pt

$(1)$ \ Any cofibration $f: A\longrightarrow B$ is $\Hom_{\mathcal M}(-, {\rm T}\mathcal F)$-epic$;$
in particular, if $A\in {\rm T}\mathcal F$ then $f$ is a splitting monomorphism.

\vskip5pt

$(2)$ \ Any trivial cofibration $f: A\longrightarrow B$ is $\Hom_{\mathcal M}(-, \mathcal F)$-epic$;$
in particular, if $A\in \mathcal F$ then $f$ is a splitting monomorphism.

\vskip5pt

$(3)$ \ Any fibration $f: A\longrightarrow B$ is $\Hom_{\mathcal M}({\rm T}\mathcal C,-)$-epic$;$
in particular, if $B\in {\rm T}\mathcal C$ then  $f$ is a splitting epimorphism.

\vskip5pt

$(4)$ \ Any trivial fibration $f: A\longrightarrow B$ is $\Hom_{\mathcal M}(\mathcal C, -)$-epic$;$
in particular, if $B\in \mathcal C$ then $f$ is a splitting epimorphism.\end{lem}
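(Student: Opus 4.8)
The plan is to deduce every assertion from a single instance of the lifting axiom, using as one leg of the lifting square the structure morphism $T\longrightarrow 0$ or $0\longrightarrow T$ attached to the relevant test object $T$. No new machinery is needed beyond the fact that $\mathcal M$, being pointed, has a unique morphism into and out of $0$, so the outer rectangles below commute automatically.

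For $(1)$ I would fix $T\in {\rm T}\mathcal F$ and a morphism $u\colon A\longrightarrow T$, with the goal of producing $v\colon B\longrightarrow T$ satisfying $vf=u$. Since $T$ is trivially fibrant, $T\longrightarrow 0$ lies in $\Fib\cap\Weq$. In the commutative square with top edge $u$, left edge $f\in\CoFib$, right edge $T\longrightarrow 0$ and bottom edge $B\longrightarrow 0$ (the two composites $A\to T\to 0$ and $A\to B\to 0$ agree, both being the zero morphism $A\longrightarrow 0$), the lifting axiom applied to the cofibration $f$ against the trivial fibration $T\longrightarrow 0$ gives a diagonal $h\colon B\longrightarrow T$ with $hf=u$. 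Hence $\Hom_{\mathcal M}(f,T)$ is surjective, i.e. $f$ is $\Hom_{\mathcal M}(-,{\rm T}\mathcal F)$-epic. For the "in particular", when $A\in{\rm T}\mathcal F$ take $T=A$ and $u=\Id_A$ to obtain $v\colon B\longrightarrow A$ with $vf=\Id_A$, so $f$ is a splitting monomorphism.

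Statement $(2)$ is proved by the same square, now with $f$ a trivial cofibration and $T\in\mathcal F$, so $T\longrightarrow 0$ is only required to be a fibration; here one invokes the lifting axiom in its other form (trivial cofibration against fibration). Statements $(3)$ and $(4)$ are the categorical duals: for $(3)$, given $T\in{\rm T}\mathcal C$ and $v\colon T\longrightarrow B$, the morphism $0\longrightarrow T$ lies in $\CoFib\cap\Weq$, and the commutative square with left edge $0\longrightarrow T$, bottom edge $v$, top edge $0\longrightarrow A$ and right edge $f\in\Fib$ admits, by the lifting axiom (trivial cofibration against fibration), a diagonal $h\colon T\longrightarrow A$ with $fh=v$; thus $\Hom_{\mathcal M}(T,f)$ is surjective, and taking $T=B$, $v=\Id_B$ when $B\in{\rm T}\mathcal C$ yields a section of $f$. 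For $(4)$ one repeats this with $f$ a trivial fibration and $T\in\mathcal C$, using $0\longrightarrow T\in\CoFib$ and the lifting axiom in the form (cofibration against trivial fibration).

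There is no genuine obstacle: the only point requiring attention is to match the four cases to the two available forms of the lifting axiom, and to record that in each square the outer rectangle commutes because one of its two parallel sides is a map into or out of the zero object.
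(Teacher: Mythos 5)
Your proof is correct, and it is exactly the standard argument: the paper does not reprove this lemma but cites [BR, VIII, Lemma 1.1], whose proof is precisely the lifting-axiom-against-the-zero-object square you describe, with the four cases matched to the two forms of the lifting axiom just as you do. Nothing is missing.
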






\subsection{Exact model structures and the Hovey correspondence}
A model structure on exact category $\A$ is {\it exact} ([G2]), if
cofibrations are precisely inflations with cofibrant cokernel, and fibrations are precisely
deflations with fibrant kernel. If this is the case, trivial cofibrations are precisely inflations with trivially cofibrant cokernel,
and trivial fibrations are precisely deflations with trivially fibrant kernel.
If $\mathcal A$ is abelian, then this goes to the notion of {\it abelian model structure} ([H2]).

\vskip5pt

\vskip5pt

{\it A Hovey triple} (\cite{H2}, \cite{G2}) in exact category $\mathcal A$ is a triple \ $(\mathcal C, \mathcal F, \mathcal W)$ \ of classes of objects such that
\  $\mathcal W$ is {\it thick} in $\mathcal A$ (i.e., $\mathcal W$ is closed under direct summands,
and if two out of the three terms in an admissible exact sequence  are in $\mathcal W$, then so is the third one); and that
both $(\mathcal C \cap \mathcal W, \ \mathcal F)$ and \ $(\mathcal C, \ \mathcal F \cap \mathcal W)$ \  are complete cotorsion pairs.

\begin{thm} \label{hoveycorrespondence} {\rm (The Hovey correspondence) \ ([H2, Theorem 2.2]; [G2, 3.3]; \cite[6.9]{S})} \ Let $\mathcal A$ be a weakly idempotent complete exact category.
Then there is a one-to-one correspondence between exact model structures and Hovey triples in $\mathcal A$, given by
 \ $({\rm CoFib}, \ {\rm Fib}, \ {\rm Weq})\mapsto (\mathcal{C}, \ \mathcal{F}, \ \mathcal W)$,
with the inverse \ \ $(\mathcal{C}, \ \mathcal{F}, \ \mathcal W) \mapsto ({\rm CoFib}, \ {\rm Fib}, \ {\rm Weq}).$
\end{thm}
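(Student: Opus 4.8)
The plan is to follow Hovey's strategy, in the exact-category form due to Gillespie and \v{S}\'{t}ov\'{i}\v{c}ek; here are the main steps.

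First I would treat the passage from a model structure to a triple of classes of objects. Given an exact model structure $(\CoFib, \Fib, \Weq)$ on $\mathcal A$, the claim is that $(\mathcal C, \mathcal F, \mathcal W)$ is a Hovey triple. By the definition of exactness, $\CoFib$ is the class of inflations with cokernel in $\mathcal C$ and $\TCoFib$ the class of inflations with cokernel in ${\rm T}\mathcal C$, and dually for fibrations; the lifting axiom then translates, as usual, into the $\Ext^1$-orthogonality $\Ext^1_{\mathcal A}(\mathcal C, {\rm T}\mathcal F)=0=\Ext^1_{\mathcal A}({\rm T}\mathcal C, \mathcal F)$, and the factorization axiom applied to the morphisms $0\longrightarrow X$ and $X\longrightarrow 0$ produces the approximation sequences that make $(\mathcal C, {\rm T}\mathcal F)$ and $({\rm T}\mathcal C, \mathcal F)$ complete cotorsion pairs. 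Thickness of $\mathcal W$ comes from Lemma \ref{object}(5) together with the two-out-of-three axiom applied along an admissible short exact sequence. Conversely, given a Hovey triple one \emph{defines} $\CoFib, \TCoFib, \Fib, \TFib$ as the inflations, respectively deflations, with (co)kernel in the prescribed class, and $\Weq:=\TFib\circ\TCoFib$; the work is then to show this is a model structure whose associated triple is the original one.

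The core input is the fact that a complete cotorsion pair $(\mathcal X, \mathcal Y)$ in a weakly idempotent complete exact category yields a weak factorization system whose left class consists of the inflations with cokernel in $\mathcal X$ and whose right class consists of the deflations with kernel in $\mathcal Y$: the lifting property is the standard translation of $\Ext^1_{\mathcal A}(\mathcal X, \mathcal Y)=0$, the factorizations come from completeness together with the push-out/pull-back constructions available in an exact category, and closure of both classes under retracts is exactly where weak idempotent completeness enters, via the arguments of \cite[Section 7]{Bu}. Applying this to $({\rm T}\mathcal C, \mathcal F)$ and to $(\mathcal C, {\rm T}\mathcal F)$ gives the two weak factorization systems $(\TCoFib, \Fib)$ and $(\CoFib, \TFib)$, from which the lifting and factorization axioms for $(\CoFib, \Fib, \Weq)$ follow at once, and $\CoFib, \Fib$ are closed under retracts.

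The delicate part, and the step I expect to be the real obstacle, is the two-out-of-three axiom (and the retract axiom) for $\Weq$. My plan is first to establish an intrinsic description of $\Weq$: a morphism $f$ belongs to $\Weq$ if and only if, in one — equivalently every — factorization $f=p\circ i$ with $i\in\CoFib$, $p\in\Fib$, one has $p\in\TFib$; equivalently, in one — equivalently every — factorization $f=q\circ j$ with $j\in\CoFib$, $q\in\TFib$, one has $j\in\TCoFib$. Proving that these properties are independent of the chosen factorization, and coincide with membership in $\TFib\circ\TCoFib$, is a chain of diagram chases that repeatedly use the thickness of $\mathcal W$ — to transport the ``$\mathcal W$-part'' of a kernel or cokernel across a pull-back or push-out square — together with the two weak factorization systems just constructed. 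Once this characterization is available, two-out-of-three and the retract axiom for $\Weq$ follow by further diagram chases of the same kind. This is precisely Hovey's argument \cite[Theorem~2.2]{H2} in its exact reformulation \cite{G2}, \cite[6.9]{S}, and I would reproduce it rather than look for a shortcut.

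It remains to check that the two assignments are mutually inverse. Beginning with an exact model structure, the triple $(\mathcal C, \mathcal F, \mathcal W)$ regenerates $\CoFib$ and $\Fib$ since exactness says precisely that these are the inflations, respectively deflations, with the prescribed (co)kernels, and it regenerates $\Weq$ because $\Weq=\TFib\circ\TCoFib$ by Lemma \ref{morphism}(1). Beginning with a Hovey triple, the model structure built from it has as cofibrant objects exactly the $X$ with $X\cong\Coker(0\longrightarrow X)\in\mathcal C$, hence $\mathcal C$; dually its fibrant objects form $\mathcal F$; and its trivial objects form $\mathcal W$: if $0\longrightarrow X$ lies in $\Weq=\TFib\circ\TCoFib$, then $X$ is a deflation-quotient of an object of ${\rm T}\mathcal C\subseteq\mathcal W$ by a kernel in ${\rm T}\mathcal F\subseteq\mathcal W$, so $X\in\mathcal W$ by thickness; conversely, for $X\in\mathcal W$ completeness of $(\mathcal C, {\rm T}\mathcal F)$ gives an admissible short exact sequence $0\to F\to C'\to X\to 0$ with $C'\in\mathcal C$ and $F\in{\rm T}\mathcal F$, and thickness of $\mathcal W$ forces $C'\in\mathcal W$, so $0\longrightarrow C'\in\TCoFib$ and $C'\longrightarrow X\in\TFib$, whence $0\longrightarrow X\in\Weq$. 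This establishes the asserted one-to-one correspondence.
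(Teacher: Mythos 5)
The paper does not prove Theorem \ref{hoveycorrespondence}: it is quoted from [H2], [G2] and [\v{S}], so there is no internal proof to compare against. Your outline is the standard strategy of those references and is sound as a plan: complete cotorsion pairs in a weakly idempotent complete exact category yield the two weak factorization systems $(\CoFib,\TFib)$ and $(\TCoFib,\Fib)$, the real difficulty is concentrated in the two-out-of-three and retract axioms for $\Weq=\TFib\circ\TCoFib$, and your mutual-inverse check at the end (in particular the identification, via thickness and completeness, of the trivial objects of the constructed model structure with $\mathcal W$) is carried out correctly. For the first direction you should also record that the orthogonality $\Ext^1(\mathcal C,{\rm T}\mathcal F)=0=\Ext^1({\rm T}\mathcal C,\mathcal F)$ together with completeness and closure under direct summands is what upgrades the pairs to genuine cotorsion pairs; as a sketch this is acceptable.

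One slip is worth fixing. The first of your two ``intrinsic descriptions'' of $\Weq$ is false as stated: it is not true that $f\in\Weq$ forces $p\in\TFib$ in every (or even in some chosen) factorization $f=p\circ i$ with merely $i\in\CoFib$ and $p\in\Fib$. Take $f={\rm Id}_X$ and factor it as $X\longrightarrow X\oplus C\longrightarrow X$ with $C\in\mathcal C\cap\mathcal F$ but $C\notin\mathcal W$ (such $C$ exists whenever the model structure is non-trivial, by Theorem \ref{hoA}); then $i$ is a cofibration, $p$ is a fibration, $pi\in\Weq$, yet $\Ker p=C\notin{\rm T}\mathcal F$. The correct statement requires $i\in\TCoFib$ rather than $i\in\CoFib$; it is then the exact dual of your second characterization, which is stated correctly and is the one the cited sources actually use. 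Since you state the correct version as well and explicitly propose to reproduce the known argument, the plan goes through once this is corrected.
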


A Hovey triple  $(\mathcal C, \mathcal F, \mathcal W)$ is {\it hereditary} if both the cotorsion pairs
$(\mathcal C \cap \mathcal W, \ \mathcal F)$ and \ $(\mathcal C, \ \mathcal F \cap \mathcal W)$ \  are hereditary.
An advantage of a hereditary Hovey triple is that the homotopy category of the corresponding exact model structure
is just the stable category of a Frobenius category, and hence triangulated.
J. Gillespie \cite[Theorem 1.1]{G3} has
constructed all the hereditary Hovey triples in an abelian category via two compatible hereditary complete cotorsion pairs.

\subsection{Weak factorization systems} \ Recall the basics on weak factorization systems from \cite{AHRT} and \cite{S}.

\vskip5pt

\begin{defn}\label{WFS} \ Let $\mathcal M$ be a category.
A pair $({\rm L},{\rm R})$ of classes of morphisms of $\mathcal M$ is a weak factorization system on $\mathcal M$,
provided that the following conditions are satisfied:

\vskip5pt

$(1)$ \ \ Both ${\rm L}$ and ${\rm R}$ are closed under retracts.

\vskip5pt

$(2)$ \ \ $l\ \square\ r$ for all $l\in {\rm L}$, $r\in {\rm R}$.

\vskip5pt

$(3)$ \ For any $f\in {\rm Mor}(\mathcal M)$, there is a factorization $f= r\circ l$ with $l\in {\rm L}$ and $r\in {\rm R}$.
\end{defn}

\vskip5pt

Typical examples of weak factorization systems come from model structures:
if $(\CoFib, \Fib, \Weq)$ is a model structure on category $\mathcal M$, then  both $(\CoFib, \TFib)$ and $(\TCoFib, \Fib)$ are weak factorization systems on $\mathcal M$.
For the proofs of the following facts we refer to \cite{AHRT} and \cite{S}. See also [Q1] and [Q2].

\begin{lem}\label{lemFWFS} \ Let $({\rm L},{\rm R})$ be a weak factorization system on  $\mathcal M$. Then

  \vskip5pt

  $(1)$ \ \ ${\rm L}={\rm LLP}({\rm R})$ and \  ${\rm R}={\rm RLP}({\rm L})$. In particular, isomorphisms  belong to ${\rm L}$ and ${\rm R}$.

    \vskip5pt

  $(2)$ \ \ ${\rm L}$ and ${\rm R}$ are closed under compositions. If $\mathcal M$ is additive, then ${\rm L}$ and ${\rm R}$  are closed under finite coproducts.

    \vskip5pt

  $(3)$ \ \ ${\rm L}$ is closed under push-outs.

\vskip5pt

In particular, if $\mathcal M$ is pointed and $f\in {\rm L}$,  and if $f$ has cokernel, then $0\longrightarrow \Coker f$ is in ${\rm L}$.

    \vskip5pt

$(4)$ \ ${\rm R}$ is closed under pull-backs.

\vskip5pt  In particular, if $\mathcal M$ is pointed and $f\in {\rm R}$, and if $f$ has kernel, then $\Ker f\longrightarrow 0$ is in ${\rm R}$.

  \vskip5pt

  $(5)$ \ Assume that $\mathcal M$ is additive.

\vskip5pt

If \ $0\longrightarrow X$ is in ${\rm L}$, then any splitting monomorphism with cokernel $X$ belongs to ${\rm L}.$

\vskip5pt

If \ $X\longrightarrow 0$ is in ${\rm R}$, then any splitting epimorphism with kernel $X$ belongs to $R$.

\end{lem}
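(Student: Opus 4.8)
The plan is to prove part (1) by the classical \emph{retract argument} and then to deduce parts (2)--(5) from (1) by formal lifting manipulations. For (1), the inclusion ${\rm L}\subseteq {\rm LLP}({\rm R})$ is exactly Definition \ref{WFS}(2). Conversely, I would take $f\colon A\longrightarrow B$ in ${\rm LLP}({\rm R})$ and factor it, by Definition \ref{WFS}(3), as $f=r\circ l$ with $l\colon A\longrightarrow E$ in ${\rm L}$ and $r\colon E\longrightarrow B$ in ${\rm R}$. Applying $f\ \square\ r$ to the square with top edge $l$, bottom edge ${\rm Id}_B$, left edge $f$ and right edge $r$ produces $h\colon B\longrightarrow E$ with $hf=l$ and $rh={\rm Id}_B$; then $f$ is a retract of $l$ --- with the identity maps on the source $A$ and with $h,r$ on the target --- so $f\in {\rm L}$ because ${\rm L}$ is closed under retracts. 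The equality ${\rm R}={\rm RLP}({\rm L})$ is dual. Since an isomorphism has both the left and the right lifting property against every morphism, it lies in ${\rm LLP}({\rm R})\cap{\rm RLP}({\rm L})={\rm L}\cap{\rm R}$.

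Parts (2)--(4) then become formal diagram chases through ${\rm L}={\rm LLP}({\rm R})$ and ${\rm R}={\rm RLP}({\rm L})$. For closure of ${\rm L}$ under composition: given $l_1,l_2\in {\rm L}$ and a test square for $l_2 l_1$ against $r\in {\rm R}$, I first solve the lifting problem for $l_1$ against $r$, and the resulting map becomes the top edge of a lifting problem for $l_2$ against $r$, whose solution solves the original square; the argument for ${\rm R}$ is dual. In the additive case, a test square for $l_1\oplus l_2$ against $r$ decomposes into one test square for each $l_i$, and the two component lifts assemble into a single lift, giving closure of ${\rm L}$ under finite coproducts (dually for ${\rm R}$). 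For closure of ${\rm L}$ under push-outs, if $l'$ is a push-out of $l\in {\rm L}$, then any test square for $l'$ against $r\in {\rm R}$ precomposes, along the push-out leg, to a test square for $l$ against $r$, and the lift obtained, together with the remaining arrow of the push-out cocone, induces a lift for $l'$ by the universal property (compatibility with $r$ is checked using uniqueness in that property); closure of ${\rm R}$ under pull-backs is dual. The two ``in particular'' assertions use that, in a pointed category, $0\longrightarrow\Coker f$ is the push-out of $f$ along the unique morphism from the source of $f$ to $0$, and dually $\Ker f\longrightarrow 0$ is the pull-back of $f$ along the unique morphism from $0$ to the target of $f$.

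For (5), suppose $0\longrightarrow X$ is in ${\rm L}$ and $g\colon A\longrightarrow B$ is a splitting monomorphism with $\Coker g=X$. Then $0\longrightarrow A\stackrel{g}{\longrightarrow}B\longrightarrow X\longrightarrow 0$ is a splitting short exact sequence, and Definition-Fact \ref{defnfact}(4) exhibits $g$ as isomorphic, in the arrow category of $\mathcal M$, to the canonical split monomorphism $A\longrightarrow A\oplus X$ onto the first summand; the latter is the push-out of $0\longrightarrow X$ along $0\longrightarrow A$, hence lies in ${\rm L}$ by (3), and therefore so does $g$, an isomorphism of morphisms being in particular a retract. The statement for ${\rm R}$ and splitting epimorphisms with kernel $X$ is dual, using that such an epimorphism is isomorphic to the projection $A\oplus X\longrightarrow A$, which is the pull-back of $X\longrightarrow 0$ along $A\longrightarrow 0$.

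I expect no serious obstacle here: the single non-formal ingredient is the retract argument in (1), and it is classical. Once it is in place, (2)--(5) are pure bookkeeping --- keeping the orientation of the squares straight in the composition and push-out/pull-back steps, and invoking Definition-Fact \ref{defnfact} in (5) to replace ``splitting (epi)monomorphism'' by an honest biproduct decomposition. This is exactly why the excerpt is content to refer to \cite{AHRT} and \cite{S}.
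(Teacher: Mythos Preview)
Your proof is correct and follows the standard retract-argument approach; the paper itself does not give a proof of this lemma but simply refers to \cite{AHRT} and \cite{S} (see the sentence preceding Lemma~\ref{lemFWFS}), where exactly this argument appears. There is nothing to compare beyond noting that your write-up is what those references contain.
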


\section{\bf The homotopy category}
By definition the homotopy category $\Ho(\mathcal M)$, of a model structure $(\CoFib, \Fib, \Weq)$ on category $\mathcal M$, is the localization category ${\mathcal M}[\Weq^{-1}]$.
If $\mathcal M$ is additive, then ${\mathcal M}[\Weq^{-1}]$ is additive and the localization functor $\mathcal M \longrightarrow \Ho(\mathcal M)$ is additive. See Lemma \ref{locilzationasquotient}.

\vskip5pt

\subsection{The homotopy categories of model structures} \ If $\mathcal M$ is a pointed category, then one has three full subcategories:
the subcategory $\mathcal C$ consisting of cofibrant objects,
the subcategory $\mathcal F$ consisting of fibrant object, and $\mathcal C\cap \mathcal F$. As $\Ho(\mathcal M)=\mathcal {M}[\Weq^{-1}]$,
we put $$\Ho(\mathcal C)=\mathcal {C}[(\Weq\cap {\rm Mor}(\mathcal C))^{-1}],  \ \ \ \ \Ho(\mathcal F)=\mathcal {F}[(\Weq\cap {\rm Mor}(\mathcal F))^{-1}]$$ and $$\Ho(\mathcal C\cap \mathcal F)=(\mathcal C\cap \mathcal F)[(\Weq\cap {\rm Mor}(\mathcal C\cap \mathcal F))^{-1}].$$ Denote the corresponding localization functor by $L$, $L_{\mathcal C}$, $L_{\mathcal F}$ and $L_{\mathcal C\cap \mathcal F}$, respectively. Then the canonical embedding $i: \mathcal C\cap \mathcal F\longrightarrow \mathcal M$ and the universal property of the localization functor induces
a unique functor $\overline{i}:\Ho(\mathcal C\cap \mathcal F)\longrightarrow\Ho(\mathcal M)$ such that the following diagram commutes:
\begin{equation}
\xymatrix{
\mathcal C\cap \mathcal F \ar[r]^-i\ar[d]_{L_{\mathcal C\cap \mathcal F}}& M\ar[d]^-L\\
\Ho(\mathcal C\cap \mathcal F)\ar@{..>}[r]^{\overline{i}}& \Ho(\mathcal M).
}
\end{equation}

\begin{thm}\label{HoCcapF} \ Let $\mathcal M$ be a pointed category with finite coproducts and products, and $(\CoFib, \Fib, \Weq)$ a model structure on $\mathcal M$. Then the canonical functor
  $\overline{i}:\Ho(\mathcal C\cap \mathcal F)\longrightarrow\Ho(\mathcal M)$ is an equivalence of categories.
\end{thm}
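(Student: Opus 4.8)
The plan is to check that $\overline{i}$ is essentially surjective and fully faithful. Essential surjectivity is immediate from replacements; the delicate point is full faithfulness, and here one cannot invoke Quillen's homotopy relation — transitivity of left homotopy requires pushouts, which $\mathcal M$ need not admit — so it must be replaced by a direct manipulation of zigzags based on the factorization and lifting axioms, on Lemma~\ref{split}, and on cylinder and path objects, the latter being available precisely because $\mathcal M$ has finite coproducts and products.

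\emph{Replacements and essential surjectivity.} For each $M$ the factorization axiom gives $0\longrightarrow QM\xrightarrow{q_M}M$ with $(0\to QM)\in\CoFib$, $q_M\in\TFib$, and $M\xrightarrow{r_M}RM\longrightarrow 0$ with $r_M\in\TCoFib$, $(RM\to 0)\in\Fib$. By Lemma~\ref{morphism}(2), $M\in\mathcal F$ implies $QM\in\mathcal C\cap\mathcal F$ and $M\in\mathcal C$ implies $RM\in\mathcal C\cap\mathcal F$; hence $RQM\in\mathcal C\cap\mathcal F$ and the weak equivalences $q_M$ and $r_{QM}$ form a zigzag $M\longleftarrow QM\longrightarrow RQM$. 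Since $\overline i\circ L_{\mathcal C\cap\mathcal F}=L\circ i$, this yields $\overline i\big(L_{\mathcal C\cap\mathcal F}(RQM)\big)\cong L(M)$ in $\Ho(\mathcal M)$, so $\overline i$ is essentially surjective.

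\emph{Reducing the hom-sets to $\mathcal M(X,Y)$.} Fix $X,Y\in\mathcal C\cap\mathcal F$. I would first show that every morphism of $\Ho(\mathcal M)(X,Y)$, and of $\Ho(\mathcal C\cap\mathcal F)(X,Y)$, is the image of a morphism of $\mathcal M(X,Y)$. Starting from a representing zigzag, apply the ladder construction: for a forward arrow $f\colon A\to B$ a lift against $q_B\in\TFib$ (lifting axiom, $(0\to QA)\in\CoFib$) produces $Qf\colon QA\to QB$ with $q_BQf=fq_A$, and then a lift of $r_{QA}\in\TCoFib$ against $(RQB\to 0)\in\Fib$ produces $RQf\colon RQA\to RQB$; two-out-of-three shows replacements of weak equivalences are weak equivalences, so the zigzag may be taken with all vertices in $\mathcal C\cap\mathcal F$ and backward legs weak equivalences between such objects. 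For such a backward leg $w$, factor $w=p\circ j$ with $j\in\TCoFib$, $p\in\TFib$; then $p$ splits (Lemma~\ref{split}(4), its codomain being cofibrant) and $j$ splits (Lemma~\ref{split}(2), its domain being fibrant), and these splittings express $L(w)^{-1}$ as the image of a genuine morphism of $\mathcal M$ (and of $\mathcal C\cap\mathcal F$). Repeating collapses the whole zigzag to a single morphism $X\to Y$ of $\mathcal M$. Consequently $\Ho(\mathcal M)(X,Y)$ and $\Ho(\mathcal C\cap\mathcal F)(X,Y)$ are both quotients of $\mathcal M(X,Y)$, and $\overline i$ on hom-sets is the induced map between these quotients; in particular $\overline i$ is full.

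\emph{The main obstacle: the two quotients agree.} It remains to prove that for $h,k\in\mathcal M(X,Y)$ one has $L(h)=L(k)$ in $\Ho(\mathcal M)$ if and only if $L_{\mathcal C\cap\mathcal F}(h)=L_{\mathcal C\cap\mathcal F}(k)$; the ``if'' direction is trivial and the ``only if'' direction is the heart of the theorem. Since the classical route $L(h)=L(k)\Rightarrow h\simeq k\Rightarrow$ equality in the homotopy category is blocked, I would instead factor the diagonal $Y\longrightarrow Y\times Y$ (using finite products) as $Y\xrightarrow{s}PY\xrightarrow{(d_0,d_1)}Y\times Y$ with $s\in\TCoFib$, $(d_0,d_1)\in\Fib$, check by Lemma~\ref{morphism} that $PY\in\mathcal C\cap\mathcal F$, and note that $d_0,d_1$ are weak equivalences with $d_0s=d_1s=\Id_Y$, whence $L_{\mathcal C\cap\mathcal F}(d_0)=L_{\mathcal C\cap\mathcal F}(s)^{-1}=L_{\mathcal C\cap\mathcal F}(d_1)$. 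The task is then to extract from the bare equality $L(h)=L(k)$ in $\Ho(\mathcal M)$ an actual morphism $H\colon X\to PY$ of $\mathcal M$ with $d_0H=h$ and $d_1H=k$; applying $L_{\mathcal C\cap\mathcal F}$ to such an $H$ gives $L_{\mathcal C\cap\mathcal F}(h)=L_{\mathcal C\cap\mathcal F}(d_0)L_{\mathcal C\cap\mathcal F}(H)=L_{\mathcal C\cap\mathcal F}(d_1)L_{\mathcal C\cap\mathcal F}(H)=L_{\mathcal C\cap\mathcal F}(k)$, so that $\overline i$ is faithful and the proof is complete. Producing $H$ — essentially a calculus-of-fractions argument for $\Weq$ restricted to $\mathcal C\cap\mathcal F$, carried out by repeated liftings, with cylinder objects for the cofibrant object $X$ (obtained by factoring $X\sqcup X\to X$) supplying the needed normal forms, and no pushout or pullback beyond the available finite (co)products — is where the real work lies, and I expect it to be the main obstacle.
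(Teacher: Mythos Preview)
Your essential surjectivity and fullness sketches are close to the paper's: it too replaces the vertices of a zigzag by objects of $\mathcal C\cap\mathcal F$ (using $Q$ and $R$), factors each backward weak equivalence as $\TCoFib$ followed by $\TFib$, and then uses Lemma~\ref{split}(2),(4) to turn the backward legs into forward ones, landing on a zigzag in $\mathcal C\cap\mathcal F$.

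The gap is in faithfulness. You correctly isolate the obstacle --- producing an actual right homotopy $H$ from the bare equality $L(h)=L(k)$ in $\Ho(\mathcal M)$ --- and leave it unresolved. This is a genuine gap, not a routine step: what you are asking for is precisely the hard direction of Quillen's $\Ho(\mathcal M)(X,Y)\cong\pi(X,Y)$, whose standard proof needs transitivity of homotopy and hence pushouts, exactly what you set out to avoid. There is no evident ``calculus-of-fractions argument'' that extracts $H$ from an arbitrary chain of identifications in the Gabriel--Zisman localization without reproducing that machinery.

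The paper sidesteps this completely by constructing a one-sided inverse functor. The path-object computation you wrote down is used, but only in the easy direction: if $p_Y\circ f_1=p_Y\circ f_2$ for a trivial fibration $p_Y$ and $f_1,f_2\colon X\to Y$ with $X,Y\in\mathcal C$, then $L_{\mathcal C}(f_1)=L_{\mathcal C}(f_2)$ (this is Lemma~\ref{faithful}). That is enough to make $M\mapsto QM$, $f\mapsto L_{\mathcal C}(Qf)$ a well-defined functor $\overline{Q}\colon\mathcal M\to\Ho(\mathcal C)$ sending $\Weq$ to isomorphisms; by the universal property it factors through $\Ho(\mathcal M)$. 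Iterating with a fibrant replacement $\overline{R}_{\mathcal C}\colon\mathcal C\to\Ho(\mathcal C\cap\mathcal F)$ yields $\overline{p}\colon\Ho(\mathcal M)\to\Ho(\mathcal C\cap\mathcal F)$, and a short diagram chase with the universal property gives $\overline{p}\circ\overline{i}=\Id_{\Ho(\mathcal C\cap\mathcal F)}$. Faithfulness of $\overline{i}$ follows immediately, without ever decoding an equality in the localization back into a homotopy.
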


\begin{lem}\label{faithful}

 Let $\mathcal M$ be a pointed category with finite coproducts and products, and $(\CoFib, \Fib, \Weq)$ a model structure on $\mathcal M$. Then

 \vskip5pt

 $(1)$ \ Let $f, g: X\longrightarrow Y$ with $X\in \mathcal C$ and  $Y\in \mathcal C$. Suppose that there is a trivial fibration $h:Y\longrightarrow Z$ such that $hf=hg$.
 Then $L_{\mathcal C}(f)=L_{\mathcal C}(g)$ in $\Ho(\mathcal C)$.

 \vskip5pt

 $(2)$ \ Let $f, g: Y\longrightarrow Z$ with $Y\in \mathcal F$ and $Z\in \mathcal F$. Suppose that there is a trivial cofibration $h: X\longrightarrow Y$ such that $fh=gh$.
 Then $L_{\mathcal F}(f)=L_{\mathcal F}(g)$ in $\Ho(\mathcal F)$.

 \vskip5pt

 $(3)$ \ Let $f, g: X\longrightarrow Y$ with $X\in \mathcal C\cap \mathcal F$ and $Y\in \mathcal C\cap \mathcal F$.
 Suppose that there is a trivial fibration $h: Y\longrightarrow Z$ such that $hf=hg$. Then $L_{\mathcal C\cap\mathcal F}(f)=L_{\mathcal C\cap\mathcal F}(g)$ in $\Ho(\mathcal C\cap\mathcal F)$.

 \vskip5pt

 $(4)$ \ Let $f, g: Y\longrightarrow Z$ with $Y\in \mathcal C\cap\mathcal F$ and $Z\in \mathcal C\cap\mathcal F$.
 Suppose that there is a trivial cofibration  $h:X\longrightarrow Y$ such that $fh=gh$.
 Then $L_{\mathcal C\cap\mathcal F}(f)=L_{\mathcal C\cap\mathcal F}(g)$ in $\Ho(\mathcal C\cap\mathcal F)$.
\end{lem}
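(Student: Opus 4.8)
My plan is to establish the four identities by building, from the factorization axiom alone, a cylinder object for $X$ (in $(1)$ and $(3)$) and a path object for $Z$ (in $(2)$ and $(4)$), and then transporting an appropriate lifting square through the relevant localization functor. Observe first why one cannot simply cancel $h$: in $(1)$ and $(3)$ the target $Z$ of the trivial fibration is not assumed cofibrant, so $h$ need not be a morphism of $\mathcal C$ (resp.\ $\mathcal C\cap\mathcal F$) and $L_{\mathcal C}(h)$ is a priori undefined; dually, in $(2)$ and $(4)$ the source of the trivial cofibration $h$ need not be fibrant. This is precisely where the extra structure on $\mathcal M$ (finite coproducts, resp.\ finite products) is used.

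For $(1)$ I would take the coproduct injections $\iota_1,\iota_2\colon X\to X\sqcup X$ and the fold map $\nabla\colon X\sqcup X\to X$ (so $\nabla\iota_1=\nabla\iota_2=\Id_X$), and factor $\nabla$ as $X\sqcup X\xrightarrow{j}\mathrm{Cyl}\xrightarrow{q}X$ with $j\in\CoFib$ and $q\in\TFib$; set $j_k=j\iota_k$, so $qj_1=qj_2=\Id_X$. Since $\mathcal C$ is closed under finite coproducts (Lemma \ref{object}(3)) we have $X\sqcup X\in\mathcal C$, hence $\mathrm{Cyl}\in\mathcal C$ by Lemma \ref{CCoFib}(1); thus $f,g,j_1,j_2,q$ and the lift $H$ below are all morphisms of $\mathcal C$. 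The square with sides $(f,g)\colon X\sqcup X\to Y$ on top, $j$ on the left, $(hf)q\colon\mathrm{Cyl}\to Z$ on the bottom, and $h$ on the right commutes: restricting the two composites $X\sqcup X\to Z$ along $\iota_1,\iota_2$ gives $hf,hg$ on one side and $hf,hf$ on the other (because $(hf)qj=(hf)\nabla$), and these agree since $hf=hg$. As $j\in\CoFib$ and $h\in\TFib$, the lifting axiom supplies $H\colon\mathrm{Cyl}\to Y$ with $Hj=(f,g)$, i.e.\ $Hj_1=f$ and $Hj_2=g$. Since $q\in\Weq$ is a morphism of $\mathcal C$, $L_{\mathcal C}(q)$ is invertible, so $L_{\mathcal C}(j_1)=L_{\mathcal C}(q)^{-1}=L_{\mathcal C}(j_2)$, whence $L_{\mathcal C}(f)=L_{\mathcal C}(H)L_{\mathcal C}(j_1)=L_{\mathcal C}(H)L_{\mathcal C}(j_2)=L_{\mathcal C}(g)$.

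Part $(2)$ I would do by the formal dual: factor the diagonal $\Delta\colon Z\to Z\times Z$ as $Z\xrightarrow{q}\mathrm{Path}\xrightarrow{p}Z\times Z$ with $q\in\TCoFib$, $p\in\Fib$, put $p_k=\pi_k p$ (so $p_1q=p_2q=\Id_Z$), and note $\mathrm{Path}\in\mathcal F$ because $Z\times Z\in\mathcal F$ (Lemma \ref{object}(4)) and $p\in\Fib$ (Lemma \ref{CCoFib}(3)). The square with $q(fh)\colon X\to\mathrm{Path}$ on top, $h$ on the left, $(f,g)\colon Y\to Z\times Z$ on the bottom, $p$ on the right commutes because $fh=gh$, and lifting (with $h\in\TCoFib$, $p\in\Fib$) gives $H\colon Y\to\mathrm{Path}$ with $p_1H=f$, $p_2H=g$; since $L_{\mathcal F}(p_1)=L_{\mathcal F}(q)^{-1}=L_{\mathcal F}(p_2)$ we get $L_{\mathcal F}(f)=L_{\mathcal F}(g)$. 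For $(3)$ I would rerun the cylinder construction of $(1)$ and note in addition that $\mathrm{Cyl}\in\mathcal F$, since $q\colon\mathrm{Cyl}\to X$ lies in $\Fib$ and $X\in\mathcal F$ (Lemma \ref{CCoFib}(3)); for $(4)$ I would rerun the path construction of $(2)$ and note that $\mathrm{Path}\in\mathcal C$, since $q\colon Z\to\mathrm{Path}$ lies in $\CoFib$ and $Z\in\mathcal C$ (Lemma \ref{CCoFib}(1)). In both cases every morphism in sight lies in $\mathcal C\cap\mathcal F$ and the identical computation yields $L_{\mathcal C\cap\mathcal F}(f)=L_{\mathcal C\cap\mathcal F}(g)$.

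I expect the only real obstacle to be setting up the lifting square correctly — recognizing that the relation $\nabla=qj$ forces $(hf)q$ (and, dually, $q(fh)$) to make the square commute precisely when $hf=hg$ (resp.\ $fh=gh$) — together with the routine bookkeeping that keeps all the auxiliary maps inside the relevant subcategory. Once the lift $H$ is in hand, the conclusion is immediate from the universal property of the localization applied to the weak equivalence $q$ in either construction.
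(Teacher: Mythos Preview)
Your proposal is correct and follows essentially the same argument as the paper: factor the fold map $X\sqcup X\to X$ (respectively the diagonal $Z\to Z\times Z$) into a cofibration followed by a trivial fibration (respectively a trivial cofibration followed by a fibration), lift the square against $h$, and conclude via the invertibility of $L(q)$. The only addition you make is to spell out explicitly, for $(3)$ and $(4)$, why the cylinder (respectively path) object lies in $\mathcal C\cap\mathcal F$; the paper simply asserts these two parts ``can be similarly proved''.
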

\begin{proof} \ We only prove $(1)$ and $(2)$. The assertion $(3)$ and $(4)$ can be similarly proved.

\vskip5pt

(1) \ Consider $(1,1): X\oplus X\longrightarrow X$ and its factorization  $(1, 1) = \sigma \circ (\partial_1, \partial_2)$,
where $(\partial_1,\partial_2): X\oplus X\longrightarrow \tilde{X}$ is in $\CoFib$ and $\sigma: \tilde{X}\longrightarrow X$ is in $\TFib$.
Since $X\in \mathcal C$, it follows from Lemma \ref{object}(3) that $X\oplus X\in \mathcal C$, and hence $\tilde{X}\in \mathcal C$, by Lemma \ref{CCoFib}(1).
The following commutative diagram
\[
\xymatrix{
X\oplus X\ar[r]^-{(f,g)}\ar[d]_-{(\partial_1,\partial_2)} & Y\ar[d]^-{h}\\
\tilde{X}\ar@{..>}[ur]^-{t}\ar[r]^-{fh\sigma}& Z
}
\]
gives a lifting $t$ such that $(f,g)= t(\partial_1,\partial_2)$. Since $\sigma\in \Weq\cap {\rm Mor}(\mathcal C)$, $L_{\mathcal C}(\sigma)$ is an isomorphism and hence
$L_{\mathcal C}(\partial_1)=L_{\mathcal C}(\sigma)^{-1}=L_{\mathcal C}(\partial_2)$. Thus $L_{\mathcal C}(f)=L_{\mathcal C}(t\partial_1)=L_{\mathcal C}(t)L_{\mathcal C}(\partial_1)=L_{\mathcal C}(t)L_{\mathcal C}(\partial_2)=L_{\mathcal C}(t\partial_2)=L_{\mathcal C}(g)$.

\vskip5pt

$(2)$ \ Consider the morphism $\left(\begin{smallmatrix} 1 \\ 1\end{smallmatrix}\right): Z\longrightarrow Z\prod Z$ and its factorization  $\left(\begin{smallmatrix} 1 \\ 1\end{smallmatrix}\right) =\left(\begin{smallmatrix} \partial_1 \\ \partial_2\end{smallmatrix}\right) \circ \sigma$,
where $\sigma: Z\longrightarrow \hat{Z}$ is in $\TCoFib$ and $\left(\begin{smallmatrix} \partial_1 \\ \partial_2\end{smallmatrix}\right): \hat{Z}\longrightarrow Z\prod Z$ is in $\Fib$.
Since $Z\in \mathcal F$, it follows from Lemma \ref{object}(4) that $Z\prod Z\in \mathcal F$, and hence $\hat{Z}\in \mathcal F$, by Lemma \ref{CCoFib}(3).
The following commutative diagram
\[
\xymatrix{&
X\ar[r]^-{\sigma fh}\ar[d]_-{h} & \hat{Z}\ar[d]^-{ \left(\begin{smallmatrix} \partial_1 \\ \partial_2\end{smallmatrix}\right)}\\
&Y\ar@{..>}[ur]^-{t}\ar[r]_-{\left(\begin{smallmatrix} f \\ g\end{smallmatrix}\right) }& Z\prod Z
}
\]
gives a lifting $t$ such that $\left(\begin{smallmatrix} f \\ g\end{smallmatrix}\right)= \left(\begin{smallmatrix} \partial_1 \\ \partial_2\end{smallmatrix}\right)t$.
Since $\sigma\in \Weq\cap {\rm Mor}(\mathcal F)$,  $L_{\mathcal F}(\partial_1)=L_{\mathcal F}(\sigma)^{-1}=L_{\mathcal F}(\partial_2)$. Thus $L_{\mathcal F}(f)=L_{\mathcal F}(\partial_1t)=L_{\mathcal F}(\partial_1)L_{\mathcal F}(t)=L_{\mathcal F}(\partial_2)L_{\mathcal F}(t)=L_{\mathcal F}(\partial_2t)=L_{\mathcal F}(g)$.
\end{proof}

\subsection{\bf Proof of Theorem \ref{HoCcapF}} \ We need to show that $\overline{i}$ is fully faithful and dense. For $X\in \mathcal M$, fix a factorization of morphism
$$(0\longrightarrow X) = ({\rm Q}X \stackrel {p_X} \longrightarrow X)\circ (0\longrightarrow {\rm Q}X)\eqno (3.2)$$
where ${\rm Q}X\in \mathcal C$ and  $p_X\in \TFib$,  such that if $X\in \mathcal C$ then ${\rm Q}X=X$ and $p_X={\rm Id}_X$.
Also, fix a factorization
$$(X\longrightarrow 0) = ({\rm R}X \longrightarrow 0)\circ (X\stackrel {i_X} \longrightarrow {\rm R}X)\eqno(3.3)$$
where ${\rm R}X\in \mathcal F$ and  $i_X\in \TCoFib$, such  that if  $X\in \mathcal F$ then ${\rm R}X=X$ and $i_X={\rm Id}_X$.
Since $L(i_X)$ and $L(p_{_{{\rm R}X}})$ are isomorphisms, one has $X\cong {\rm QR}X$ in $\Ho(M)$.
Since $p_{_{{\rm R}X}}: {\rm QR}X\longrightarrow {\rm R}X$ is a fibration with ${\rm R}X\in \mathcal F$,
by Lemma \ref{CCoFib}(3)  ${\rm QR}X\in \mathcal F$, and hence ${\rm QR}X\in \mathcal C\cap \mathcal F$. By $(3.1)$ one see that $\overline{i}$ is dense.

  \vskip5pt

To prove that $\overline{i}$ is faithful, define a functor $\overline{\rm Q}: \mathcal M\longrightarrow \Ho(\mathcal C)$ as follows. For $X\in \mathcal M$, let $\overline{\rm Q}X={\rm Q}X$.
For $f: X\longrightarrow Y$, choose ${\rm Q}f$ to be a lifting of
\[
\xymatrix{
0\ar[r]\ar[d]& {\rm Q}Y\ar[d]^-{p_Y}\\
{\rm Q}X\ar@{..>}[ur]^-{{\rm Q}f}\ar[r]^-{f\circ p_X}& Y
}
\]
such that if both $X$ and $Y$ are in $\mathcal C$ then ${\rm Q}f = f$ (in this case one has $p_X={\rm Id}_X$ and $p_Y={\rm Id}_Y$).
Define $\overline{{\rm Q}}(f)=L_{\mathcal C}({\rm Q}f)$.
By Lemma \ref{faithful}(1), $L_{\mathcal C}({\rm Q}f)$ does not depend on the choice of ${\rm Q}f$.
Thus $\overline{{\rm Q}}$ is well-defined, taking weak equivalences into isomorphisms, and the restriction $\overline{{\rm Q}}|_{\mathcal C}$
is exactly $L_\mathcal C$. Therefore
$$L_{\mathcal C} = \overline{{\rm Q}}\circ i_\mathcal M$$ where $i_\mathcal M: \mathcal C\longrightarrow \mathcal M$ is the embedding.

\vskip5pt

Similarly, there is a well-define functor $\overline{{\rm R}}_\mathcal C:\mathcal C\longrightarrow \Ho(\mathcal C\cap \mathcal F)$, taking weak equivalences into isomorphisms,
and the restriction $\overline{{\rm R}}_\mathcal C|_{\mathcal C\cap \mathcal F}$
is exactly $L_{\mathcal C\cap \mathcal F}$. Thus, one has
$$L_{\mathcal C\cap \mathcal F} = \overline{{\rm R}}_\mathcal C\circ i_\mathcal C$$ where $i_\mathcal C: \mathcal C\cap \mathcal F \longrightarrow \mathcal C$ is the embedding.

\vskip5pt

By the universal property of localization functors, one knows that $\overline{{\rm R}}_\mathcal C$ factors through $L_{\mathcal C}$ by
$\overline{{\rm R}}_\mathcal C': \Ho(\mathcal C)\longrightarrow \Ho(\mathcal C\cap \mathcal F)$,  and that
$\overline{{\rm R}}_{\mathcal C}'\circ\overline{{\rm Q}}$ factors through $L$ by
$\overline{p}: \Ho(\mathcal M)\longrightarrow \Ho(\mathcal C\cap \mathcal F)$:

\[
\xymatrix{
\mathcal C\ar[r]^-{L_\mathcal C}\ar[d]_{\overline{\rm R}_{\mathcal C}}& \Ho(\mathcal C)\ar[dl]^-{\overline{{\rm R}}_{\mathcal C}'}
&& M \ar[r]^-{L}\ar[d]_{\overline {{\rm R}}_{\mathcal C}'\circ \overline {{\rm Q}}}& \Ho(\mathcal M)\ar[dl]^-{\overline{p}}\\
\Ho(\mathcal C\cap \mathcal F) & && \Ho(\mathcal C\cap \mathcal F).
}
\]
Now there is a commutative diagram of functors
\[
\xymatrix{
\mathcal C\cap \mathcal F \ar[r]^-i\ar[d]_{L_{\mathcal C\cap \mathcal F}}& M\ar[dl]^-{\overline{{\rm R}}_{\mathcal C}'\circ\overline{Q}}\\
\Ho(\mathcal C\cap \mathcal F) &
}
\]
In fact, one has
$$
\overline{{\rm R}}_{\mathcal C}'\circ\overline{Q}\circ i=\overline{{\rm R}}_{\mathcal C}'\circ\overline{Q}\circ i_\mathcal M\circ i_\mathcal C=\overline{{\rm R}}_{\mathcal C}'\circ L_\mathcal C\circ i_\mathcal C=\overline{{\rm R}}_{\mathcal C}\circ i_\mathcal C=L_{\mathcal C\cap \mathcal F}.$$
Thus, by $(3.1)$ one has $$L_{\mathcal C\cap \mathcal F}=\overline{{\rm R}}_{\mathcal C}'\circ\overline{Q}\circ i =\overline{p}\circ L\circ i=\overline{p}\circ\overline{i}\circ L_{\mathcal C\cap \mathcal F}.$$
Therefore $\overline{p}\circ\overline{i}={\rm Id}_{\Ho(\mathcal C\cap \mathcal F)}$, by the universal property of localization functors.
In particular, $\overline{i}$ is faithful.

\vskip5pt

It remains to prove that $\overline{i}$ is full. That is, for any morphism  $\delta: X\longrightarrow Y\in {\rm Mor}(\Ho(\mathcal M))$ with $X$ and $Y$ in $\mathcal C\cap \mathcal F$, we need to prove $\delta = \overline{i}(\psi)$ for some $\psi: X\longrightarrow Y$ in ${\rm Mor}(\Ho(\mathcal C\cap \mathcal F))$. Note that $\delta$ can be written as $\delta={g_n}^{-1}f_n\cdots {g_1}^{-1}f_1$:
\[
\xymatrix{
X\ar[r]^-{f_1}& A_1& B_1\ar[l]_-{g_1}\ar[r]^-{f_2} & A_2& B_2\ar[l]_-{g_2}\ar[r]^-{f_3}& \ldots \ar[r]^-{f_{n}}& A_{n}& B_n=Y\ar[l]_-{g_{n}}
}
\]
where $g_1,\ldots, g_n\in \Weq$. See Subsection 2.3.

\vskip5pt

The main idea of the proof  below is to replace $A_j$ and $B_j$, by some objects in $\mathcal C\cap \mathcal F$, by using the factorizations $(3.2)$ and $(3.3)$, as well as Factorization axiom.
For each object $A_j$ we use the factorization $(3.3)$, and for each object $B_j$ we use the factorization $(3.2)$:
\[
\xymatrix@R=0.5cm{A_j \ar[rr]\ar[rd]_-{i_{_{A_j}}} && 0 && 0\ar[rr]\ar[rd]&& B_j\\
&{\rm R}A_j\ar[ru] && && {\rm Q}B_j\ar[ru]_-{p_{_{B_j}}}
}
\]
with ${\rm R}A_j\in \mathcal F$, \ $i_{_{A_j}}\in \TCoFib$, and  ${\rm Q}B_j\in \mathcal C$, \ $p_{_{B_j}}\in \TFib$.
Then one has the sequence of morphisms
\[
\xymatrix{
X\ar[r]^-{i_{_{A_1}}f_1}& {\rm R}A_1& {\rm Q}B_1\ar[l]_-{i_{_{A_1}}g_1p_{_{B_1}}}\ar[r]^-{i_{_{A_2}}f_2p_{_{B_1}}} & {\rm R}A_2& {\rm Q}B_2
\ar[l]_-{i_{_{A_2}}g_2p_{_{B_2}}}\ar[r]^-{i_{_{A_3}}f_3p_{_{B_2}}}& \ldots \ar[rr]^-{i_{_{A_n}}f_{n}p_{_{B_{n-1}}}}&& {\rm R}A_{n}& Y\ar[l]_-{i_{_{A_n}}g_{n}}
}
\]
and  $$\delta=(i_{_{A_n}}g_n)^{-1} (i_{_{A_n}}f_{n}p_{_{B_{n-1}}})\cdots (i_{_{A_2}}g_2p_{_{B_2}})^{-1} (i_{A_2}f_2p_{_{B_1}})(i_{_{A_1}}g_1p_{_{B_1}})^{-1} (i_{_{A_1}}f_1).$$
Consider factorizations of weak equivalences $i_{_{A_j}}g_jp_{_{B_j}}: {\rm R}A_j\longrightarrow {\rm Q}B_j$ for $j=1, \ldots,n:$
\[
\xymatrix@R=0.5cm{
{\rm R}A_j&& {\rm Q}B_j\ar[ll]_-{i_{_{A_j}}g_jp_{_{B_j}}}\ar[ld]^-{\sigma_j}\\
& C_j\ar[lu]^{\pi_j}&
}
\]
with $\sigma_j\in \TCoFib$ and $\pi_j\in \TFib$.
Note that ${\rm Q}B_n=B_n=Y\in \mathcal F$ and $p_{_{B_n}}={\rm Id}_Y$ since $Y\in \mathcal C$.
Since $\sigma_j: {\rm Q}B_j\longrightarrow C_j$ is a cofibration and ${\rm Q}B_j\in\mathcal C$, by Lemma \ref{CCoFib}(1)  $C_j\in\mathcal C$.
Since $\pi_j: C_j\longrightarrow {\rm R}A_j$ is a fibration and ${\rm R}A_j\in\mathcal F$, by Lemma \ref{CCoFib}(3) $C_j\in\mathcal F$.
Thus each $C_j\in\mathcal C\cap\mathcal F$.

\vskip5pt

Since $\pi_1: C_1 \longrightarrow {\rm R}A_1$ is a trivial fibration and $X\in\mathcal C$, by
Lemma \ref{split}(4) there exists  $h_1: X\longrightarrow C_1$ such that $\pi_1 h_1=i_{_{A_1}}f_1: X\longrightarrow {\rm R}A_1.$ See diagram

\[
\xymatrix@R=0.5cm{
X\ar[rr]^-{i_{_{A_1}}f_1}\ar@{..>}[rd]_-{h_1}&& {\rm R}A_j\\
& C_j\ar[ru]_-{\pi_1}&
}
\]

For $j=2,\ldots, n$, since $\pi_j: C_j \longrightarrow {\rm R}A_j$ is a trivial fibration and ${\rm Q}B_{j-1}\in\mathcal C$, by
Lemma \ref{split}(4) there exists
$h_j: {\rm Q}B_{j-1}\longrightarrow C_j$ such that $\pi_j h_j=i_{_{A_j}}f_jp_{_{B_{j-1}}}: {\rm Q}B_{j-1}\longrightarrow {\rm R}A_j$.  See diagram

\[
\xymatrix@R=0.5cm{
{\rm Q}B_{j-1}\ar[rr]^-{i_{_{A_j}}f_jp_{_{B_{j-1}}}}\ar@{..>}[rd]_-{h_j}&& {\rm R}A_j\\
& C_j\ar[ru]_-{\pi_j}&
}
\]

\vskip5pt

Again,  Since $\sigma_{j-1}:  {\rm Q}B_{j-1}\longrightarrow C_{j-1}$ is a trivial cofibration and $C_j\in\mathcal F$, by  Lemma \ref{split}(2) there exists $r_j:C_{j-1}\longrightarrow C_j$ such that $r_j\sigma_{j-1}=h_j$ for $j=2,\ldots, n$. See diagram

\[
\xymatrix@R=0.5cm{{\rm Q}B_{j-1}\ar[rr]^-{\sigma_{j-1}}
\ar[rd]_-{h_j}&& C_{j-1}\ar@{..>}[ld]^-{r_j}\\
& C_j &
}
\]
Thus, one has $i_{_{A_j}}f_jp_{_{B_{j-1}}} = \pi_jh_j = \pi_jr_j\sigma_{j-1}$ for $j=2,\ldots, n$.

\vskip5pt

In this way we obtain a sequence of morphisms in $\mathcal C\cap \mathcal F$:
\[
\xymatrix{
X\ar[r]^-{h_1}&C_1\ar[r]^-{r_2}&C_2\ar[r]^-{r_3}&\cdots\ar[r]^-{r_n}&C_n&Y.\ar[l]_-{\sigma_n}
}
\]
Put $\psi=(\sigma_n)^{-1}r_n\cdots r_2h_1 \in {\rm Mor}(\Ho(\mathcal C\cap \mathcal F))$. We claim $\delta = \overline{i}(\psi)$. In fact, in $\Ho(\mathcal M)$ one has

\[
\begin{aligned}
\delta&=(i_{_{A_n}}g_n)^{-1} (i_{_{A_n}}f_{n}p_{_{B_{n-1}}})\cdots (i_{_{A_2}}g_2p_{_{B_2}})^{-1} (i_{A_2}f_2p_{_{B_1}})(i_{_{A_1}}g_1p_{_{B_1}})^{-1} (i_{_{A_1}}f_1)\\
&=(\pi_n\sigma_n)^{-1}(\pi_nr_n\sigma_{n-1}) \cdots(\pi_2\sigma_2)^{-1}(\pi_2r_2\sigma_1)(\pi_1\sigma_1)^{-1}(\pi_1h_1)\\
&=(\sigma_n)^{-1}r_n\cdots r_2h_1\\
&=((L\circ i)(\sigma_n))^{-1}\circ (L\circ i)(r_n\cdots r_2h_1)\\
&=((\overline{i}\circ L_{\mathcal C\cap \mathcal F})(\sigma_n))^{-1}\circ (\overline{i}\circ L_{\mathcal C\cap \mathcal F})(r_n\cdots r_2h_1)\\
&=(\overline{i}(L_{\mathcal C\cap \mathcal F}(\sigma_n))^{-1})\circ \overline{i}(L_{\mathcal C\cap \mathcal F}(r_n\cdots r_2h_1))\\
&=\overline{i}((L_{\mathcal C\cap \mathcal F}(\sigma_n))^{-1}\circ L_{\mathcal C\cap \mathcal F}(r_n\cdots r_2h_1))\\
&=\overline{i}((\sigma_n)^{-1}r_n\cdots r_2h_1)\\
&=\overline{i}(\psi).
\end{aligned}
\]
This completes the proof. \hfill $\square$

\vskip5pt

\subsection{\bf Proof of Theorem \ref{hoA}} \ By Lemma \ref{object}, $\mathcal C\cap \mathcal F$ is an additive category and
$\mathcal C\cap \mathcal F\cap \mathcal W$ is a full additive subcategory. Thus $(\mathcal C\cap \mathcal F)/(\mathcal C\cap \mathcal F\cap \mathcal W)$ is an additive category.
Set
$$\mathcal S =\{f\in {\rm Mor}(\mathcal C\cap \mathcal F) \ |\ \overline{f} \text{\ is\ an\ isomorphism\ in\ } (\mathcal C\cap \mathcal F)/(\mathcal C\cap \mathcal F\cap \mathcal W)\}.$$
Then $(\mathcal C\cap \mathcal F)/(\mathcal C\cap \mathcal F\cap \mathcal W)\cong(\mathcal C\cap \mathcal F)[\mathcal S^{-1}]$ (cf. Lemma \ref{locilzationasquotient}).
By Theorem \ref{HoCcapF} one has $$\Ho(\mathcal A) \cong \Ho(\mathcal C\cap \mathcal F)=(\mathcal C\cap \mathcal F)[(\Weq\cap {\rm Mor}(\mathcal C\cap \mathcal F))^{-1}].$$ Thus, what we need to prove is
exactly $$(\mathcal C\cap \mathcal F)[\mathcal S^{-1}]\cong (\mathcal C\cap \mathcal F)[(\Weq\cap {\rm Mor}(\mathcal C\cap \mathcal F))^{-1}].$$
So, it suffices to prove that for $f:A\longrightarrow B$ with  $A\in \mathcal C\cap \mathcal F$ and $B\in \mathcal C\cap \mathcal F$, $f\in \Weq$ if and only if $f\in \mathcal S$.

\vskip5pt

Suppose that $f\in \Weq$. Then there is a factorization $f=tu$ with $u\in \TCoFib$ and $t\in \TFib$.
\[
\xymatrix@R=0.5cm{
A\ar[rr]^-{f}\ar@<0.5ex>[dr]_-{u} &  & B\\
& C\ar@<0.5ex>[ur]_-{t} &
}
\]
Then $u$ is a splitting monomorphism, by Lemma \ref{split}(2). Since $\mathcal A$ is a weakly idempotent complete additive category, $\Coker u$ exists. By Lemma \ref{morphism}(4),
$\Coker u\in {\rm T}\mathcal C$. Similarly, $t$ is a splitting epimorphism with $\Ker t\in {\rm T}\mathcal F$.
Since $0\longrightarrow C = u\circ (0\longrightarrow A)$, it follows that $0\longrightarrow C$ is a cofibration and hence $C\in \mathcal C$.
Thus $\Ker t\in \mathcal C$ as a direct summand of $C$, and hence $\Ker t\in \mathcal C\cap \mathcal F\cap \mathcal W$. Similarly, $\Coker u\in \mathcal C\cap \mathcal F\cap \mathcal W$.

\vskip5pt

Now we have two splitting short exact sequences
$$0\longrightarrow A \stackrel u \longrightarrow C \stackrel {c}\longrightarrow \Coker u \longrightarrow 0, \ \ \
0\longrightarrow \Ker t \stackrel k \longrightarrow C \stackrel {t}\longrightarrow B \longrightarrow 0.$$
Thus, there are morphisms \ $v: C\longrightarrow A$, \ $e: \Coker u\longrightarrow C$,  \ $s:B\longrightarrow C$ and  \ $p: C\longrightarrow \Ker t$,
such that $$v\circ u= {\rm Id}_{A}, \ \ \ c\circ e= {\rm Id}_{\Coker u}, \ \ \ u\circ v+e \circ c= {\rm Id}_C$$
and that
$$t\circ s= {\rm Id}_B,  \ \ \ p\circ k= {\rm Id}_{\Ker t}, \ \ \  k\circ p + s\circ t= {\rm Id}_C.$$ Put \ $g=vs$.
Then $${\rm Id}_A - gf=vu-vstu=v({\rm Id}_C- s t)u=vkpu$$ and $${\rm Id}_B-fg=ts-tuvs=t({\rm Id}_C-uv)s=tecs.$$
Since ${\rm Id}_A - gf = (v\circ k)\circ (p\circ u)$ factors through $\Ker t\in \mathcal C\cap \mathcal F\cap \mathcal W$ and
${\rm Id}_B-fg = (t\circ e)\circ (c\circ s)$ factors through $\Coker u\in \mathcal C\cap \mathcal F\cap \mathcal W$, it follows that
$\overline{f}$ is an isomorphism in $(\mathcal C\cap \mathcal F)/(\mathcal C\cap \mathcal F\cap \mathcal W)$, i.e., $f\in \mathcal S$.

\vskip5pt

Conversely, let $f\in \mathcal S$. By definition there is a morphism $g:B\longrightarrow A$ such that ${\rm Id}_A - gf =ts$ for some  $s:A\longrightarrow U$ and $t: U\longrightarrow A$ with $U\in \mathcal C\cap \mathcal F\cap \mathcal W$.
Thus  $(g, t)\left(\begin{smallmatrix} f\\s\end{smallmatrix}\right)={\rm Id}_A$. In particular, $\left(\begin{smallmatrix} f\\s\end{smallmatrix}\right): A \longrightarrow B\oplus U$ is a splitting monomorphism. By assumption $\left(\begin{smallmatrix} f\\s\end{smallmatrix}\right)$ has cokernel. It follows that
there is a splitting short exact sequence $$0\longrightarrow A \stackrel{\left(\begin{smallmatrix} f\\s\end{smallmatrix}\right)}\longrightarrow B\oplus U \stackrel {(b, u)} \longrightarrow \Coker \left(\begin{smallmatrix} f\\s\end{smallmatrix}\right) \longrightarrow 0.$$
Since $U\in \mathcal C\cap \mathcal F\cap \mathcal W$ and $\overline{f}$ is an isomorphism in $(\mathcal C\cap \mathcal F)/(\mathcal C\cap \mathcal F\cap \mathcal W)$, it follows that $\overline{\left(\begin{smallmatrix} f\\s\end{smallmatrix}\right)}: A\longrightarrow B\oplus U$ is also an isomorphism in $(\mathcal C\cap \mathcal F)/(\mathcal C\cap \mathcal F\cap \mathcal W)$. Thus $\overline {(b, u)} = 0$ in  $(\mathcal C\cap \mathcal F)/(\mathcal C\cap \mathcal F\cap \mathcal W)$. But $(b, u)$ is a splitting epimorphism, thus there is a morphism $\left(\begin{smallmatrix} b'\\u'\end{smallmatrix}\right): \Coker \left(\begin{smallmatrix} f\\s\end{smallmatrix}\right) \longrightarrow B\oplus U$ such that $(b,u)\left(\begin{smallmatrix} b'\\u'\end{smallmatrix}\right) = {\rm Id}_{\Coker \left(\begin{smallmatrix} f\\s\end{smallmatrix}\right) }$.
Therefore in $(\mathcal C\cap \mathcal F)/(\mathcal C\cap \mathcal F\cap \mathcal W)$ there holds
$${\rm Id}_{\Coker \left(\begin{smallmatrix} f\\s\end{smallmatrix}\right) } = \overline{{\rm Id}_{\Coker \left(\begin{smallmatrix} f\\s\end{smallmatrix}\right)} } =
\overline{(b,u)}\overline{\left(\begin{smallmatrix} b'\\u'\end{smallmatrix}\right)} = 0.$$
Thus $\Coker \left(\begin{smallmatrix} f\\s\end{smallmatrix}\right) \in \mathcal C\cap \mathcal F\cap \mathcal W$. In particular,
$\Coker \left(\begin{smallmatrix} f\\s\end{smallmatrix}\right) \in \mathcal C\cap \mathcal W$.
By Lemma \ref{morphism}(6), $\left(\begin{smallmatrix} f\\s\end{smallmatrix}\right)\in \TCoFib$.

\vskip5pt

Also, $(1,0): A\oplus U\longrightarrow A$ is a splitting epimorphism with kernel $U\in \mathcal F\cap \mathcal W$.
By Lemma \ref{morphism}(7), $(1, 0)\in \TFib$.
Thus, by the decomposition $f=(1, 0)\left(\begin{smallmatrix} f\\s\end{smallmatrix}\right)$ one sees that $f\in \Weq.$ \hfill $\square$

\vskip5pt

\section{\bf Construction of fibrant model structures}

The aim of this section is to prove Theorems \ref{fibrant}, \ref{mainthm} and \ref{correspondence}, which describe and construct fibrant model structures.
Throughout this section, $\mathcal{A}$ is a weakly idempotent complete additive category.

\subsection{Proof of Theorem \ref{fibrant}}

\ $(1) \Longrightarrow (2):$ \ Suppose that $\mathcal F=\mathcal A$.
If $f$ is a splitting monomorphism with $\Coker f\in \mathcal C$, then $\Coker f\in {\rm T}\mathcal C$.
By Lemma \ref{morphism}(6) one sees that $f\in \TCoFib$.
Conversely, if $f\in \TCoFib$,  then $f$ is a splitting monomorphism, by Lemma \ref{split}(2);  and $\Coker f\in {\rm T}\mathcal C$, by Lemma \ref{morphism}(4).

\vskip5pt

$(2) \Longrightarrow (3):$  \   Suppose that
$\TCoFib=\{\text{splitting monomorphism} \ f \ |\ \Coker f\in {\rm T}\mathcal C \}$.
By Lemma \ref{split}(3) one sees the inclusion $\Fib \subseteq \{ \text{morphism} \ g \ | \ g \ \mbox{is} \ \Hom_{\mathcal A}({\rm T}\mathcal C, -)\mbox{-epic}\}$.
Conversely, let $g: C\longrightarrow D$ be a $\Hom_{\mathcal A}({\rm T}\mathcal C, -)$-epimorphism. Then for any
trivial cofibration $f: A\longrightarrow B$, by assumption $f$ is a splitting monomorphism with  $\Coker f\in {\rm T}\mathcal C$.
Thus one has a splitting short exact sequence $0\longrightarrow A\stackrel f \longrightarrow B\stackrel p \longrightarrow \Coker f\longrightarrow 0$
with morphisms  $f': B\longrightarrow A$ and $p': \Coker f\longrightarrow B$ such that
$$f'f={\rm Id}_A,  \ \ pp'= {\rm Id}_C, \ \ ff'+p'p={\rm Id}_B.$$
Given an arbitrary commutative square
\[
\xymatrix{
A\ar[r]^-a\ar[d]_-f & C\ar[d]^-g\\
B\ar[r]^-{b}& D
}
\]
one has $bp': \Coker f \longrightarrow D$. Since $\Coker f\in {\rm T}\mathcal C$ and $g: C\longrightarrow D$ is a $\Hom_{\mathcal A}({\rm T}\mathcal C, -)$-epimorphism,
it follows that there exists a morphism $h: \Coker f \longrightarrow C$ such that $bp' = gh.$
Thus the commutative square has a lifting $t = af' + hp: B\longrightarrow C$. Using $\Fib={\rm RLP}(\TCoFib)$ one sees that $g\in \Fib$.
This proves $\Fib  = \{ \text{morphism} \ f \ | \ f \ \mbox{is} \ \Hom_{\mathcal A}({\rm T}\mathcal C, -)\mbox{-epic}\}$.

\vskip5pt

$(3) \Longrightarrow (1):$ \ Suppose that $\Fib$ is exactly the class of $\Hom_{\mathcal A}({\rm T}\mathcal C, -)$-epic morphisms.
Since
 $X\longrightarrow 0$ is $\Hom_{\mathcal A}({\rm T}\mathcal C, -)$-epic for any object $X$ of $\mathcal A$, it follows that $X\in\mathcal F$. Thus $\mathcal F=\mathcal A$.

\vskip5pt

The last asserion follows from Theorem \ref{hoA}. \hfill $\square$

\begin{prop}\label{fibrantFWFS} \ Let $(\CoFib, \Fib, \Weq)$ be a fibrant model structure. Then both $(\CoFib, \TFib)$ and $(\TCoFib, \Fib)$ are fibrantly weak factorization systems.
\end{prop}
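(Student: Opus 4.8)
The plan is to verify, for each of the pairs $(\CoFib,\TFib)$ and $(\TCoFib,\Fib)$, the three conditions in Definition \ref{FWFS}. Condition $(1)$ needs no work: as recalled just before Lemma \ref{lemFWFS}, for any model structure $(\CoFib,\Fib,\Weq)$ on a category both $(\CoFib,\TFib)$ and $(\TCoFib,\Fib)$ are already weak factorization systems, so this applies here in particular.

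For condition $(2)$ I would invoke the description $\Fib=\{\,\text{morphism }h\mid h\text{ is }\Hom_{\mathcal A}({\rm T}\mathcal C,-)\text{-epic}\,\}$ furnished by Theorem \ref{fibrant}$(3)$, which is available precisely because the model structure is fibrant. The class of $\Hom_{\mathcal A}({\rm T}\mathcal C,-)$-epic morphisms has the following cancellation property: if $f\colon A\to B$ and $g\colon B\to C$ are morphisms with $gf$ being $\Hom_{\mathcal A}({\rm T}\mathcal C,-)$-epic, then for every $U\in{\rm T}\mathcal C$ and every $u\colon U\to C$ there is $v\colon U\to A$ with $gfv=u$, hence $g(fv)=u$, so $g$ is $\Hom_{\mathcal A}({\rm T}\mathcal C,-)$-epic as well. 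For $(\TCoFib,\Fib)$ this is exactly condition $(2)$. For $(\CoFib,\TFib)$, where the right class is $\TFib=\Fib\cap\Weq$, the hypothesis $f,gf\in\TFib$ gives $g\in\Fib$ by the argument just made, while $g\in\Weq$ by the two-out-of-three axiom applied to $f,gf\in\Weq$; hence $g\in\TFib$.

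For condition $(3)$ I would identify the object classes $\mathcal L$ and $\mathcal R$ attached to each system, using that every object of $\mathcal A$ is fibrant. For $(\TCoFib,\Fib)$: $X\to 0$ is always a fibration, so $\mathcal R=\mathcal A$, whereas $\mathcal L=\{X\mid 0\to X\in\TCoFib\}=\mathcal C\cap\mathcal W={\rm T}\mathcal C$, so $\mathcal L\cap\mathcal R={\rm T}\mathcal C$. For $(\CoFib,\TFib)$: $\mathcal L=\mathcal C$, and since $X\to 0$ is always a fibration, $X\to 0\in\TFib$ holds iff $X\in\mathcal W$, so $\mathcal R=\mathcal W$ and again $\mathcal L\cap\mathcal R=\mathcal C\cap\mathcal W={\rm T}\mathcal C$. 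In both cases ${\rm T}\mathcal C$ is contravariantly finite in $\mathcal A$ by Lemma \ref{object}$(1)$, which finishes condition $(3)$.

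I do not expect a genuine obstacle here: condition $(2)$ is the only new requirement beyond being a weak factorization system, and once Theorem \ref{fibrant} is in hand it reduces to the trivial right-cancellation property of $\Hom_{\mathcal A}({\rm T}\mathcal C,-)$-epic morphisms noted above. The only points to watch are the two uses of the fibrancy hypothesis: first to invoke Theorem \ref{fibrant}$(3)$, and then to conclude $\mathcal R=\mathcal A$ (respectively $\mathcal R=\mathcal W$) in condition $(3)$.
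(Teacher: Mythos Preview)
Your proposal is correct and follows essentially the same approach as the paper: both rely on Theorem \ref{fibrant}(3) to identify $\Fib$ with the $\Hom_{\mathcal A}({\rm T}\mathcal C,-)$-epic morphisms, use the obvious right-cancellation of that class (together with two-out-of-three for $\TFib$), and invoke Lemma \ref{object}(1) for the contravariant finiteness of ${\rm T}\mathcal C$. The paper's proof is terser and only spells out the case $(\CoFib,\TFib)$, whereas you treat both pairs explicitly and also identify $\mathcal L\cap\mathcal R={\rm T}\mathcal C$ in each case, but the content is the same.
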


\begin{proof} \ One needs to check that $(\CoFib, \TFib)$ satisfies the conditions in Definition \ref{FWFS}.
It is clear that $(\CoFib, \TFib)$ is a weak factorization system.
For $f: A\longrightarrow B$ and $g:B\longrightarrow C$,
suppose that  $f$ and $gf$ are in $\TFib$.
Then $g\in \Weq$, by Two out of three axiom.
By Theorem \ref{fibrant}, $\Fib$ is exactly the class of morphisms which are $\Hom_{\mathcal M}({\rm T}\mathcal C, -)$-epic.
Since $gf$ is $\Hom_{\mathcal{A}}({\rm T}\mathcal C,-)$-epic, so is $g$. Thus $g\in \Fib \cap \Weq =\TFib$. Finally, ${\rm T}\mathcal C$ is contravariantly finite, by Lemma \ref{object}(1).
\end{proof}

\subsection{\bf Properties of fibrantly weak factorization systems}
Throughout this subsection, we fix the following assumption and notations, without repeating them each time.
Assume that $(\CoFib, \TFib)$ is a fibrantly weak factorization system on $\mathcal A$. We remind the reader, although we use the notations
$\CoFib$ and $\TFib$, however, at this moment they are not assumed to be from a model structure.
Put
$\mathcal C, \ \mathcal W, \ {\rm T}\mathcal C, \ \Fib, \ \TCoFib$ and \ $\Weq$ as in $(1.1)$. Namely,
\begin{align*}
\mathcal C & = \{ \mbox{object} \ X \ | \ \mbox{morphism} \ 0\longrightarrow X \ \mbox{is in} \ \CoFib\} \\
\mathcal W & = \{ \mbox{object} \ X \ | \ \mbox{morphism}  \ X\longrightarrow 0 \ \mbox{is in} \ \TFib\} \\
{\rm T}\mathcal C & = \mathcal C\cap \mathcal W \\
\Fib & = \{ \text{morphism} \ f \ | \ f \ \mbox{is} \ \Hom_{\mathcal A}({\rm T}\mathcal C, -)\mbox{-epic}\}\\
\TCoFib & =\{\text{splitting monomorphism} \ f \ |\ \Coker f\in {\rm T}\mathcal C \}\\
\Weq & =\TFib\circ \TCoFib.\end{align*}

\begin{lem}\label{summand} \ With the same assumption and notations as above. Then the classes $\mathcal C$,  $\mathcal W$, and ${\rm T}\mathcal C$ are closed under direct summands.
\end{lem}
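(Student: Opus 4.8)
The plan is to deduce closure under direct summands from the fact that $\CoFib$ and $\TFib$ are closed under retracts of morphisms, which is built into the notion of a weak factorization system (Definition \ref{WFS}$(1)$); thus for this lemma only the weak factorization system part of $(\CoFib,\TFib)$ is used, and neither conditions $(2)$, $(3)$ of Definition \ref{FWFS} nor weak idempotent completeness of $\mathcal A$ are needed. The key observation is that a biproduct decomposition $X\cong X_1\oplus X_2$ turns the morphism $0\longrightarrow X_1$ (resp. $X_1\longrightarrow 0$) into a retract of $0\longrightarrow X$ (resp. $X\longrightarrow 0$).

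First I would handle $\mathcal C$. Let $X\in\mathcal C$ with $X\cong X_1\oplus X_2$, and write $\iota_1\colon X_1\to X$, $\pi_1\colon X\to X_1$ for the canonical injection and projection, so that $\pi_1\iota_1={\rm Id}_{X_1}$. The commutative diagram
\[
\xymatrix@R=0.5cm{
0\ar[r]\ar[d] & 0\ar[r]\ar[d] & 0\ar[d] \\
X_1\ar[r]^-{\iota_1} & X\ar[r]^-{\pi_1} & X_1
}
\]
exhibits $0\longrightarrow X_1$ as a retract of $0\longrightarrow X$ in the sense of Subsection 2.4. Since $0\longrightarrow X\in\CoFib$ and $\CoFib$ is closed under retracts, $0\longrightarrow X_1\in\CoFib$, i.e. $X_1\in\mathcal C$.

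The case of $\mathcal W$ is dual: if $X\in\mathcal W$ with $X\cong X_1\oplus X_2$, then the diagram
\[
\xymatrix@R=0.5cm{
X_1\ar[r]^-{\iota_1}\ar[d] & X\ar[r]^-{\pi_1}\ar[d] & X_1\ar[d] \\
0\ar[r] & 0\ar[r] & 0
}
\]
exhibits $X_1\longrightarrow 0$ as a retract of $X\longrightarrow 0\in\TFib$, and closure of $\TFib$ under retracts yields $X_1\longrightarrow 0\in\TFib$, i.e. $X_1\in\mathcal W$. Finally ${\rm T}\mathcal C=\mathcal C\cap\mathcal W$ is an intersection of two classes each closed under direct summands, hence closed under direct summands. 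I do not expect a genuine obstacle here; the only point that requires care is checking that the two diagrams above literally satisfy the conditions in the definition of a retract of morphisms (the squares commute because all the relevant composites equal the unique morphism $0\to X$, resp. $X\to 0$, and $\pi_1\iota_1={\rm Id}_{X_1}$).
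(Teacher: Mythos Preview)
Your proof is correct and follows exactly the same approach as the paper: the paper simply observes that since $\CoFib$ and $\TFib$ are closed under retracts by the definition of a weak factorization system, $\mathcal C$ and $\mathcal W$ (and hence ${\rm T}\mathcal C=\mathcal C\cap\mathcal W$) are closed under direct summands. You have merely spelled out the retract diagrams that the paper leaves implicit.
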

\begin{proof} \ By definition both $\CoFib$ and $\TFib$ are closed under retracts. It follows that $\mathcal C$ and $\mathcal W$ are closed under direct summands,
and hence so is ${\rm T}\mathcal C = \mathcal C\cap \mathcal W.$
\end{proof}

\begin{lem}\label{intersection1} \ With the same assumption and notations as above. Then  \ $\TCoFib= \CoFib\cap \Weq.$
\end{lem}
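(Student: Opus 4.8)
The plan is to establish the two inclusions $\TCoFib\subseteq\CoFib\cap\Weq$ and $\CoFib\cap\Weq\subseteq\TCoFib$ separately. The first is formal: let $f\colon A\to B$ be a splitting monomorphism with $\Coker f\in{\rm T}\mathcal C$. Since ${\rm T}\mathcal C\subseteq\mathcal C$, the morphism $0\to\Coker f$ lies in $\CoFib$, so Lemma \ref{lemFWFS}(5) shows that every splitting monomorphism with cokernel $\Coker f$ — in particular $f$ — lies in $\CoFib$. As $\mathrm{Id}_B$ is an isomorphism it lies in $\TFib$ by Lemma \ref{lemFWFS}(1), hence $f=\mathrm{Id}_B\circ f\in\TFib\circ\TCoFib=\Weq$, and so $f\in\CoFib\cap\Weq$.

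For the substantive inclusion, take $f\in\CoFib\cap\Weq$ and write $f=g\circ h$ with $h\colon A\to C$ in $\TCoFib$ and $g\colon C\to B$ in $\TFib$. Applying the lifting $f\ \square\ g$ (legitimate because $(\CoFib,\TFib)$ is a weak factorization system) to the square with top edge $h$, bottom edge $\mathrm{Id}_B$, left edge $f$ and right edge $g$, one gets $t\colon B\to C$ with $tf=h$ and $gt=\mathrm{Id}_B$. Since $h\in\TCoFib$ is a splitting monomorphism, say with retraction $h'$, the relation $(h't)f=\mathrm{Id}_A$ shows $f$ is a splitting monomorphism; as $\mathcal A$ is weakly idempotent complete, $\Coker f$ exists. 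Because $f\in\CoFib$ admits a cokernel, Lemma \ref{lemFWFS}(3) gives $0\to\Coker f$ in $\CoFib$, i.e. $\Coker f\in\mathcal C$. It therefore remains only to prove $\Coker f\in\mathcal W$: then $\Coker f\in\mathcal C\cap\mathcal W={\rm T}\mathcal C$ and, $f$ being a splitting monomorphism, $f\in\TCoFib$.

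To prove $\Coker f\in\mathcal W$ I would show that $\Coker f$ is a direct summand of $\Coker h$. The identity $gt=\mathrm{Id}_B$ makes $g$ a splitting epimorphism, so $\Ker g$ exists, and $g\in\TFib$ with Lemma \ref{lemFWFS}(4) gives $\Ker g\in\mathcal W$. Using the splitting of $g$ to identify $C\cong\Ker g\oplus B$, the morphism $h$ acquires the form $\left(\begin{smallmatrix}a\\f\end{smallmatrix}\right)\colon A\to\Ker g\oplus B$, its $B$-component being $gh=f$; composing with the upper-triangular automorphism $\left(\begin{smallmatrix}\mathrm{Id}&-ar\\0&\mathrm{Id}\end{smallmatrix}\right)$ of $\Ker g\oplus B$, where $r$ is a retraction of $f$, replaces $h$ by $\left(\begin{smallmatrix}0\\f\end{smallmatrix}\right)$, whence $\Coker h\cong\Ker g\oplus\Coker f$. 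Since $h\in\TCoFib$ we have $\Coker h\in{\rm T}\mathcal C$, and $\mathcal W$ is closed under direct summands by Lemma \ref{summand}, so $\Coker f\in\mathcal W$. The only genuinely non-formal step is this last computation identifying $\Coker h$ with $\Ker g\oplus\Coker f$; it rests on the facts about splitting short exact sequences in Definition-Fact \ref{defnfact}, while everything else is routine bookkeeping with the weak-factorization-system axioms.
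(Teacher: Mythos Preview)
Your proof is correct and follows essentially the same strategy as the paper: factor $f$ through $\Weq=\TFib\circ\TCoFib$, use the lifting property of the weak factorization system $(\CoFib,\TFib)$ to obtain a section, and deduce that $\Coker f$ is a direct summand of the cokernel of the $\TCoFib$-factor. The only differences are cosmetic: the paper compares the two split short exact sequences directly to see that the induced map $\Coker f\to\Coker h$ is a split monomorphism, whereas you do an explicit matrix computation to get $\Coker h\cong\Ker g\oplus\Coker f$; and your separate verification that $\Coker f\in\mathcal C$ via Lemma~\ref{lemFWFS}(3) is unnecessary, since your own computation already exhibits $\Coker f$ as a summand of $\Coker h\in{\rm T}\mathcal C$, and ${\rm T}\mathcal C$ itself is closed under summands by Lemma~\ref{summand}.
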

\begin{proof} \ The inclusion $\TCoFib\subseteq \CoFib$ follows by Lemma \ref{lemFWFS}$(5)$. For any morphism $f: A\longrightarrow B$ in $\TCoFib$,
$f = {\rm Id}_B \circ f$, where ${\rm Id}_B\in \TFib$ by Lemma \ref{lemFWFS}$(1)$. Thus $f\in \TFib\circ \TCoFib = \Weq$. This proves $\TCoFib \subseteq \CoFib\cap \Weq$.

\vskip5pt

Conversely, let $f:A\longrightarrow B$ be a morphism in $\CoFib\cap \Weq$. Since $\Weq = \TFib\circ \TCoFib$,  one has factorization
$f=hg$, where $g: A\longrightarrow C$ is in $\TCoFib$ and $h: C\longrightarrow B$ is in $\TFib$.
Then  commutative diagram
  \[
  \xymatrix{
  A\ar[r]^g\ar[d]_-f&C\ar[d]^-{h}\\
  B\ar@{=}[r]\ar@{..>}[ur]^-u&B
  }
  \]
gives a lifting $u: B\longrightarrow C$ such that $uf=g$ and $hu={\rm Id}_B$.
Since $g\in \TCoFib$, by definition $g$ is a splitting monomorphism with $\Coker g\in {\rm T}\mathcal C$. By $uf = g$ one sees that $f$
is a splitting monomorphism. Then one has commutative diagram with two splitting short exact sequences
 \[
  \xymatrix{
  0\ar[r]& A\ar@{=}[d]\ar[r]^-{f}&B\ar[d]^-u\ar[r]^-{p}\ar[r]&\Coker f\ar@{..>}[d]^-t\ar[r]&0\\
  0\ar[r]& A\ar[r]^-{g}&C\ar[r]^-c\ar[r]&\Coker f\ar[r]&0
  }
  \]
where $u$ is  a splitting monomorphism. Thus $t$ a splitting monomorphism. Therefore $\Coker f$ is a direct summand of $\Coker g$.
By Lemma \ref{summand}, $\Coker f\in {\rm T}\mathcal C$. By definition $f\in \TCoFib$. This proves $\CoFib\cap \Weq\subseteq \TCoFib$, and hence
$\TCoFib= \CoFib\cap \Weq$. \end{proof}

\begin{lem}\label{lifting} \ With the same assumption and notations as above. Let
\[
\xymatrix@R=0.5cm{
A\ar[r]^-{f}\ar[d]_-{i} & C\ar[d]^-{p}\\
B\ar[r]^-{g} & D
}
\]
be a commutative square with $i\in \TCoFib$ and $p\in \Fib$. Then there is a lifting $t: B\longrightarrow C$ such that $ti = f$ and $pt = g$.
\end{lem}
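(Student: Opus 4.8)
The statement is precisely the lifting property $\TCoFib\ \square\ \Fib$, which will later serve as the lifting axiom of the constructed model structure. Since $\mathcal A$ is not assumed to have push-outs, the plan is to write the lifting down explicitly from the splitting data of $i$, following the same idea as in the proof of the implication $(2)\Rightarrow(3)$ of Theorem \ref{fibrant}. First I would unpack the hypothesis $i\in\TCoFib$: by definition $i$ is a splitting monomorphism with $\Coker i\in{\rm T}\mathcal C$, so by Definition-Fact \ref{defnfact} there is a splitting short exact sequence $0\to A\xra{i}B\xra{q}\Coker i\to 0$ together with morphisms $i'\colon B\to A$ and $q'\colon\Coker i\to B$ satisfying
\[
i'i=\Id_A,\qquad qq'=\Id_{\Coker i},\qquad ii'+q'q=\Id_B,
\]
and in particular $qi=0$.

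Next I would exploit the hypothesis $p\in\Fib$, i.e.\ $p$ is $\Hom_{\mathcal A}({\rm T}\mathcal C,-)$-epic. Applying this with the object $\Coker i\in{\rm T}\mathcal C$, the homomorphism $\Hom_{\mathcal A}(\Coker i,p)$ is surjective, so the morphism $gq'\colon\Coker i\to D$ factors through $p$: there is $h\colon\Coker i\to C$ with $ph=gq'$.

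Finally I claim that $t:=fi'+hq\colon B\to C$ is the desired lifting. Indeed $ti=fi'i+hqi=f\cdot\Id_A+h\cdot 0=f$, while, using the commutativity $pf=gi$ of the given square together with the three relations above,
\[
pt=pfi'+phq=gii'+gq'q=g(ii'+q'q)=g.
\]
The verification is a short direct computation and there is no genuine obstacle; the only point requiring care is to lift precisely $gq'$ along $p$ in the middle step, so that the two summands $pfi'=gii'$ and $phq=gq'q$ reassemble into $g(ii'+q'q)=g$.
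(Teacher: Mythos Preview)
Your proof is correct and essentially identical to the paper's own argument: both use the splitting data of $i$ to write $\Id_B=ii'+q'q$, lift $gq'$ through $p$ using that $p$ is $\Hom_{\mathcal A}({\rm T}\mathcal C,-)$-epic, and check that $t=fi'+hq$ is the desired diagonal.
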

\begin{proof} \  By definition $i$ is a splitting monomorphism with $\Coker i\in {\rm T}\mathcal C$,
and hence there is
a splitting exact sequence $0\longrightarrow A\stackrel i\longrightarrow B\stackrel c \longrightarrow \Coker i\longrightarrow 0$. Therefore there are morphisms $i': B\longrightarrow A$ and  $c':\Coker i \longrightarrow B$ such that
 $$i'i = {\rm Id}_A, \ \ \ cc'= {\rm Id}_{\Coker i}, \ \ \ ii'+ c'c = {\rm Id}_B.$$
 Then one has a morphism $gc': \Coker i \longrightarrow D.$
Since $\Fib = \{ \text{morphism} \ f \ | \ f \ \mbox{is} \ \Hom_{\mathcal A}({\rm T}\mathcal C, -)\mbox{-epic}\}$,
it follows that $\Hom_{\mathcal{A}}(\Coker i, p)$ is epic. Thus,  there is a morphism $s:\Coker i \longrightarrow C$ such that $gc'= ps$. Then there is a lifting
$$t= fi'+sc: B\longrightarrow C$$ such that $ti = f$ and $pt = g$.\end{proof}

\begin{lem}\label{intersection2} \ With the same assumption and notations as above. Then  \ $\TFib=\Fib\cap \Weq.$
\end{lem}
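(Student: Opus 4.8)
The plan is to establish $\TFib=\Fib\cap\Weq$ by proving the two inclusions.

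For $\TFib\subseteq\Fib\cap\Weq$: I would first recall that $\TFib={\rm RLP}(\CoFib)$ by Lemma \ref{lemFWFS}(1), and that $0\longrightarrow U\in\CoFib$ for every $U\in{\rm T}\mathcal C$. Given $p\colon C\longrightarrow D$ in $\TFib$ and $v\colon U\longrightarrow D$, the square with left edge $0\longrightarrow U$ and right edge $p$ commutes, so the lifting property yields $w\colon U\longrightarrow C$ with $pw=v$; this says exactly that $\Hom_{\mathcal A}(U,p)$ is epic, so $p$ is $\Hom_{\mathcal A}({\rm T}\mathcal C,-)$-epic, i.e. $p\in\Fib$. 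For $p\in\Weq$ I would write $p=p\circ\Id$ and note $\Id$ is a splitting monomorphism with cokernel $0\in{\rm T}\mathcal C$, hence $\Id\in\TCoFib$ and $p\in\TFib\circ\TCoFib=\Weq$.

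For the reverse inclusion $\Fib\cap\Weq\subseteq\TFib$: given $f\colon A\longrightarrow B$ in $\Fib\cap\Weq$, I would use $\Weq=\TFib\circ\TCoFib$ to factor $f=hg$ with $g\colon A\longrightarrow C$ in $\TCoFib$ and $h\colon C\longrightarrow B$ in $\TFib$, and then realize $f$ as a retract of $h$, so that closure of $\TFib$ under retracts (Definition \ref{WFS}(1)) completes the argument. The key step is to produce a morphism $q\colon C\longrightarrow A$ with $qg=\Id_A$ and $fq=h$: the square with top edge $\Id_A$, left edge $g$, right edge $f$, bottom edge $h$ commutes because $f\circ\Id_A=f=hg$, and since $g\in\TCoFib$ while $f\in\Fib$, Lemma \ref{lifting} supplies precisely such a $q$. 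Then the diagram with rows $A\xrightarrow{g}C\xrightarrow{q}A$ and $B\xrightarrow{\Id_B}B\xrightarrow{\Id_B}B$, with all three verticals $f,h,f$, is a retract diagram for $f$ over $h$: the horizontal composites are $qg=\Id_A$ and $\Id_B$, and the two squares commute because $hg=f$ and $fq=h$.

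I do not expect a serious obstacle here; the one point to get right is the observation that the retraction $q$ of $g$ compatible with $f$ is exactly the diagonal filler given by Lemma \ref{lifting} — which is also the only place the hypothesis $f\in\Fib$ is used, mirroring the role of $f\in\CoFib$ in the proof of Lemma \ref{intersection1}. (Alternatively, one can build $q$ by hand as $q=g'+\beta c$, where $0\to A\xrightarrow{g}C\xrightarrow{c}\Coker g\to 0$ splits via $g',c'$ and $\beta\colon\Coker g\longrightarrow A$ is chosen with $f\beta=hc'$ using that $\Coker g\in{\rm T}\mathcal C$ and $f\in\Fib$.)
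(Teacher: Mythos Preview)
Your proposal is correct and follows essentially the same approach as the paper: both inclusions are argued identically, with $\TFib\subseteq\Fib$ via the lifting property against $0\to U$ for $U\in{\rm T}\mathcal C$, $\TFib\subseteq\Weq$ via the trivial factorization through the identity, and the reverse inclusion by factoring $f=hg$, invoking Lemma~\ref{lifting} to obtain the retraction, and concluding via closure of $\TFib$ under retracts. Your parenthetical explicit construction of $q$ simply unpacks the proof of Lemma~\ref{lifting}.
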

\begin{proof} \  The inclusion $\TFib\subseteq \Weq$ follows from  the definition $\Weq = \TFib\circ \TCoFib$, since the identity morphisms are in
$\TCoFib$.
To see the inclusion $\TFib\subseteq \Fib$,  for any $f: A \longrightarrow B$ in $\TFib$, for any object $X\in {\rm T}\mathcal C$,
the morphism $0: 0 \longrightarrow X$ is in $\TCoFib \subseteq \CoFib$.  Then any commutative diagram
  \[\xymatrix{0\ar[r]\ar[d] &A\ar[d]^-{f}\\
  X\ar[r]\ar@{..>}[ur]^-u&B
  }
  \]
will give a lifting, which means that $f$ is $\Hom_\mathcal A(X, -)$-epic. By definition $f\in \Fib$. Thus $\TFib \subseteq \Fib\cap \Weq$.

\vskip5pt

Conversely, suppose that $f:A\longrightarrow B$ is in $\Fib\cap \Weq$. Decompose $f=hg$, where $g:A\longrightarrow C\in \TCoFib$ and $h: C\longrightarrow B\in \TFib$.
Then one has commutative diagram
  \[\xymatrix{A\ar@{=}[r]\ar[d]_-g &A\ar[d]^-{f}\\
  C\ar[r]^-h\ar@{..>}[ur]^-u&B
  }
  \]
with $g\in \TCoFib$ and $f\in \Fib$. By Lemma \ref{lifting} there is
a lifting $u: C\longrightarrow A$ such that $ug={\rm Id}_A$ and $fu=h$. The commutative diagram
\[
\xymatrix{
A\ar[r]^-g\ar[d]_-f &C\ar[r]^-u\ar[d]_-h & A\ar[d]^-f  \\
B\ar@{=}[r]&B\ar@{=}[r]&B
}
\]
shows that $f$ is a retract of $h$. Since $(\CoFib, \TFib)$ is a weak factorization system, by definition  $\TFib$ is closed under retracts.
It follows that $f\in \TFib$.
\end{proof}

\begin{lem}\label{TCoFibTFib} \ \ With the same assumption and notations as above.  For $f: A\longrightarrow B$ and $g:B\longrightarrow C$, if $f\in \TCoFib$ and $gf\in \TFib$, then $g\in \TFib$.
\end{lem}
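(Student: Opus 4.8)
\emph{Plan of proof.} The strategy is to exhibit $g$ as a composite of four morphisms, each of which visibly lies in $\TFib={\rm R}$, and then to conclude by the closure of $\TFib$ under composition (Lemma \ref{lemFWFS}(2)). The closure properties of the right class of the weak factorization system $(\CoFib,\TFib)$ that I would use are: isomorphisms lie in $\TFib$ (Lemma \ref{lemFWFS}(1)); $\TFib$ is closed under finite coproducts, since $\mathcal A$ is additive (Lemma \ref{lemFWFS}(2)); and any splitting epimorphism whose kernel $X$ satisfies $X\longrightarrow 0\in\TFib$ again lies in $\TFib$ (Lemma \ref{lemFWFS}(5)).

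First I would unravel the hypothesis $f\in\TCoFib$: by definition $f$ is a splitting monomorphism with $W:=\Coker f\in{\rm T}\mathcal C$, so in particular $W\in\mathcal W$, i.e.\ $W\longrightarrow 0$ lies in $\TFib$. Fix a splitting $f'\colon B\to A$, $c'\colon W\to B$ of the split exact sequence $0\longrightarrow A\stackrel{f}{\longrightarrow}B\stackrel{c}{\longrightarrow}W\longrightarrow 0$, so that $f'f=\Id_A$, $cc'=\Id_W$ and $ff'+c'c=\Id_B$; by Definition-Fact \ref{defnfact}(4) the morphism $\left(\begin{smallmatrix}f'\\ c\end{smallmatrix}\right)\colon B\to A\oplus W$ is an isomorphism (with inverse $(f\ \ c')$). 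Writing $g_1:=gf$ and $g_2:=gc'\colon W\to C$, a direct matrix computation then gives
\[
g \;=\; (1_C\ \ 0)\circ\left(\begin{smallmatrix}1_C & g_2\\ 0 & 1_W\end{smallmatrix}\right)\circ(g_1\oplus 1_W)\circ\left(\begin{smallmatrix}f'\\ c\end{smallmatrix}\right),
\]
the only identity needing a check being $g_1f'+g_2c=g(ff'+c'c)=g$.

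It then remains to see that each of the four factors lies in $\TFib$. The rightmost factor $\left(\begin{smallmatrix}f'\\ c\end{smallmatrix}\right)$ and the middle factor $\left(\begin{smallmatrix}1_C & g_2\\ 0 & 1_W\end{smallmatrix}\right)$ are isomorphisms, hence in $\TFib$; the factor $g_1\oplus 1_W$ is the coproduct of $g_1=gf\in\TFib$ (hypothesis) and the isomorphism $1_W$, hence in $\TFib$; and $(1_C\ \ 0)\colon C\oplus W\to C$ is a splitting epimorphism with kernel $W$, so it lies in $\TFib$ by Lemma \ref{lemFWFS}(5), since $W\longrightarrow 0\in\TFib$. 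Composing, $g\in\TFib$.

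I do not anticipate a serious obstacle: the argument is essentially bookkeeping, and the only point demanding a little care is to keep in mind that ``$\TFib$'' here denotes the right class of the weak factorization system $(\CoFib,\TFib)$ — which is what makes Lemma \ref{lemFWFS} applicable — rather than $\Fib\cap\Weq$ (the two coincide by Lemma \ref{intersection2}, but that equality is not needed here). An alternative, slightly longer route would be to verify $g\in\Fib$ directly — lifting any $u\colon X\to C$ with $X\in{\rm T}\mathcal C$ through $g$ by using that $gf$ is $\Hom_{\mathcal A}({\rm T}\mathcal C,-)$-epic — and $g\in\Weq$ separately, then apply Lemma \ref{intersection2}; but establishing $g\in\Weq$ without a two-out-of-three statement for $\Weq$ is awkward, so I would prefer the composition argument above.
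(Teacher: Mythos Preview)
Your argument is correct: the matrix factorization
\[
g \;=\; (1_C\ \ 0)\circ\left(\begin{smallmatrix}1_C & g_2\\ 0 & 1_W\end{smallmatrix}\right)\circ(g_1\oplus 1_W)\circ\left(\begin{smallmatrix}f'\\ c\end{smallmatrix}\right)
\]
checks out, and each factor lies in $\TFib$ for the reasons you give.

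The paper takes a different and slightly shorter route. Rather than decomposing $g$ via an \emph{arbitrary} retraction $f'$ of $f$ (which forces you to introduce the correction term $g_2c$ and hence the coproduct $g_1\oplus 1_W$), it uses the lifting property of the weak factorization system $(\CoFib,\TFib)$ on the square
\[
\xymatrix{A\ar@{=}[r]\ar[d]_-{f} & A\ar[d]^-{gf}\\ B\ar[r]^-{g}\ar@{..>}[ur]^-{u} & C}
\]
(with $f\in\TCoFib\subseteq\CoFib$ and $gf\in\TFib$) to produce a \emph{specific} retraction $u$ of $f$ satisfying $g=(gf)\circ u$ on the nose. Then $\Ker u=\Coker f\in{\rm T}\mathcal C\subseteq\mathcal W$ by Definition-Fact~\ref{defnfact}, so $u\in\TFib$ by Lemma~\ref{lemFWFS}(5), and $g=(gf)u\in\TFib$ as a composite of two morphisms in $\TFib$. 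What this buys is that the lifting absorbs the ``off-diagonal'' term, so no coproduct closure is needed and the factorization has length two instead of four. Your approach, by contrast, avoids invoking the lifting axiom at all and works purely from the closure properties of the right class; either is perfectly acceptable.
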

\begin{proof} \ By Lemma \ref{intersection1}, $f\in \TCoFib \subseteq \CoFib$. The commutative diagram
  \[
  \xymatrix{
  A\ar@{=}[r]\ar[d]_-f&A\ar[d]^-{gf}\\
  B\ar[r]^-g\ar@{..>}[ur]^-u&C
  }
  \]
  gives a lifting $u$ such that $uf={\rm Id}_A$ and $gfu=g$. Thus $u$ is a splitting epimorphism.
Since by assumption  $f$ is a splitting monomorphism with $\Coker f\in {\rm T}\mathcal C$ and $\Ker u = \Coker f$ (cf. Definition-Fact \ref{defnfact}), 
it follows that $\Ker u\in {\rm T}\mathcal C$, and hence $\Ker u\longrightarrow 0$ is in $\TFib$, by definition.
It follows from Lemma \ref{lemFWFS}(5) that  $u\in \TFib$. Then $g = (gf)\circ u\in \TFib$,  by Lemma \ref{lemFWFS}(2).
\end{proof}

\begin{lem}\label{Weq} \ With the same assumption and notations as above.  If $f: A\longrightarrow B$ is in $\Weq$, then for any right ${\rm T}\mathcal C$-approximation $t:U\longrightarrow B$,
morphism $(f,t):A\oplus U\longrightarrow B$ is in $\TFib$.
\end{lem}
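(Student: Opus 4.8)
Let $f\colon A\to B$ lie in $\Weq=\TFib\circ\TCoFib$, so fix a factorization $f=hg$ with $g\colon A\to C$ in $\TCoFib$ and $h\colon C\to B$ in $\TFib$; let $t\colon U\to B$ be a right $\mathrm{T}\mathcal C$-approximation (it exists since $\mathrm{T}\mathcal C$ is contravariantly finite by the argument for Lemma~\ref{summand}, or directly from Definition~\ref{FWFS}(3)). The target is $(f,t)\colon A\oplus U\to B$ in $\TFib$. The plan is to realize $(f,t)$ up to retract as a morphism already known to be a trivial fibration, then invoke that $\TFib$ is closed under retracts (Definition~\ref{WFS}(1)).

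First I would lift $t$ along $h$. Since $h\in\TFib\subseteq\Fib$ and $U\in\mathrm{T}\mathcal C$, the morphism $t\colon U\to B$ factors as $t=hs$ for some $s\colon U\to C$ (this is precisely the $\Hom_{\mathcal A}(\mathrm{T}\mathcal C,-)$-epic property of $h$; equivalently one lifts the square with corners $0\to U$, $U\to B$, $0\to C$, $h$, using $0\to U\in\TCoFib\subseteq\CoFib$ and Lemma~\ref{lifting}). Then $(g,s)\colon A\oplus U\to C$ satisfies $h\circ(g,s)=(hg,hs)=(f,t)$. So it suffices to show $(g,s)\in\TCoFib$, because then $(f,t)=h\circ(g,s)$ would be a composite of maps in $\TFib$ and hence in $\TFib$ by Lemma~\ref{lemFWFS}(2) — but that is false in general since $(g,s)$ need not be monic; so instead I pivot.

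The better route: I claim $(g,s)$, although possibly not in $\TCoFib$, still yields $(f,t)\in\TFib$ via a retract argument. Since $g\in\TCoFib$, $g$ is a splitting monomorphism with $\Coker g\in\mathrm{T}\mathcal C$, giving $g'\colon C\to A$ with $g'g=\Id_A$ and a splitting sequence $0\to A\xrightarrow{g}C\xrightarrow{c}\Coker g\to 0$ with section $c'$. Now consider the splitting monomorphism $\binom{g}{0}\colon A\to C\oplus U$ — wait, rather I consider $\binom{\Id_A}{0}\colon A\to A\oplus U$, which is in $\TCoFib$ (cokernel $U\in\mathrm{T}\mathcal C$), and the map $(g,s)\oplus(\text{something})$. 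Concretely, factor: the morphism $(g,s)\colon A\oplus U\to C$ can be written as $h$ precomposed into $(f,t)$; and separately $(f,t)=h\circ(g,s)$ with $h\in\TFib$. Since $h\in\TFib$ is in particular a splitting epimorphism onto $B$ restricted appropriately (by Lemma~\ref{lemFWFS}(1), $\Id_B\in\TFib$, but more usefully: apply Lemma~\ref{TCoFibTFib}). Here is the clean finish: form the commutative square with $\binom{\Id_A}{0}\colon A\to A\oplus U$ (in $\TCoFib$) on the left, $f\colon A\to B$ on top into... I would instead directly verify the factorization $(f,t)=h\circ(g,s)$ together with a splitting $\binom{g}{s}$-style section of $(g,s)$ built from $g'$ and the $\mathrm{T}\mathcal C$-approximation property, exhibiting $(f,t)$ as a \emph{retract} of $h\in\TFib$; then $\TFib$-closedness under retracts gives $(f,t)\in\TFib$.

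\emph{The main obstacle} is constructing the retract diagram exhibiting $(f,t)$ as a retract of $h$ (or of some other explicit trivial fibration): one must produce maps $\varphi_1\colon A\oplus U\to C$, $\psi_1\colon C\to A\oplus U$ with $\psi_1\varphi_1=\Id_{A\oplus U}$ commuting with $(f,t)$ and $h$ downstairs via $\varphi_2=\Id_B=\psi_2$. The natural candidate is $\varphi_1=(g,s)$ — but this fails to be split monic because $s$ is only a partial lift. The fix is to absorb $U$: since $U\in\mathrm{T}\mathcal C$ and $g$'s cokernel is in $\mathrm{T}\mathcal C$, one can enlarge to $C\oplus U$ or use that $U\to 0\in\TFib$ to see $(0,\Id_U)\colon A\oplus U\to U$ behaves well, then apply Lemma~\ref{TCoFibTFib} with $f=\binom{\Id_A}{0}\in\TCoFib$ and $gf$ being the relevant composite landing in $\TFib$. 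Getting these bookkeeping maps to fit coherently — so that the weak-factorization-system retract axiom or Lemma~\ref{TCoFibTFib} applies on the nose — is the delicate point; the rest is formal manipulation of splitting short exact sequences via Definition-Fact~\ref{defnfact}.
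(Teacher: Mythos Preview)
Your plan never uses the hypothesis that $t$ is a right ${\rm T}\mathcal C$-approximation; you only use $U\in{\rm T}\mathcal C$. That is the gap. Lifting $t$ through $h$ to get $s\colon U\to C$ with $hs=t$ is fine, but it leaves you trying to exhibit $(f,t)$ as a retract of $h$ via $\varphi_1=(g,s)$, and as you yourself note there is no reason for $(g,s)$ to be split monic (concretely: writing $(g,s)=(g,p')\left(\begin{smallmatrix}1&g's\\0&cs\end{smallmatrix}\right)$ against the isomorphism $(g,p')\colon A\oplus\Coker g\to C$, you would need $cs\colon U\to\Coker g$ to be split epi, which nothing forces). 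Your closing remarks about ``absorbing $U$'' and invoking Lemma~\ref{TCoFibTFib} are gestures, not a construction.

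The missing idea is to go the other direction: use the approximation property of $t$ to factor the map $hp'\colon\Coker g\to B$ through $t$, producing $t'\colon\Coker g\to U$ with $tt'=hp'$. This $t'$ is exactly the map that makes the retract work. The paper first observes $(f,hp')=h\circ(g,p')\in\TFib$ (composite of an isomorphism with $h$), then uses Lemma~\ref{TCoFibTFib} applied to the splitting inclusion $A\oplus\Coker g\hookrightarrow A\oplus\Coker g\oplus U$ (cokernel $U\in{\rm T}\mathcal C$) to conclude $(f,hp',t)\in\TFib$. Finally, the matrices $\left(\begin{smallmatrix}1&0\\0&0\\0&1\end{smallmatrix}\right)$ and $\left(\begin{smallmatrix}1&0&0\\0&t'&1\end{smallmatrix}\right)$ exhibit $(f,t)$ as a retract of $(f,hp',t)$, and closure of $\TFib$ under retracts finishes. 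So the architecture you propose (enlarge, apply Lemma~\ref{TCoFibTFib}, retract) is right, but without $t'$ coming from the approximation property you cannot build the retract.
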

\begin{proof} \ Since $f\in \Weq$, by definition one has a factorization $f=hg$
\[
\xymatrix@R=0.5cm{
A\ar[rr]^-{f}\ar@<0.5ex>[dr]_-{g} &  & B\\
& C\ar@<0.5ex>[ur]_-{h} &
}
\]
with $g\in \TCoFib$ and $h\in \TFib$. Thus $g$ is a splitting monomorphism  with $\Coker g\in {\rm T}\mathcal C$, and hence one has
a splitting exact sequence $\xymatrix{0\ar[r]& A\ar[r]^-{g}& C\ar[r]^-{p}\ar[r]&\Coker g\ar[r]&0}$ and a morphism
 $p':\Coker g \longrightarrow C$ with $pp'= {\rm Id}_{\Coker g}.$ Consider morphism $hp': \Coker g \longrightarrow B$.
 Since $t: U\longrightarrow B$ is a right ${\rm T}\mathcal C$-approximation and $\Coker g\in {\rm T}\mathcal C$,
there is a morphism $t': \Coker g\longrightarrow U$ such that $hp'=tt'$.

\vskip5pt
Since $(g, p'): A\oplus \Coker g \longrightarrow C$ is an isomorphism, it is in $\TFib$. Note that $\TFib$ is closed under compositions, by Lemma \ref{lemFWFS}(2). 
Thus $(f, hp') = h(g, p'): A\oplus \Coker g \longrightarrow B$ is in $\TFib$.
 Note that in commutative diagram
\[
\xymatrix{
& A\oplus \Coker g\ar[dl]_-{\left(\begin{smallmatrix}1 & 0 \\0 & 1 \\0 & 0 \end{smallmatrix}\right)}\ar[d]^-{(f, hp')}\\
A\oplus \Coker g\oplus U\ar[r]^-{(f, hp',t)}&B
}
\]
$\left(\begin{smallmatrix}1 & 0 \\0 & 1 \\0 & 0 \end{smallmatrix}\right)$ is a splitting monomorphism with cokernel $U\in {\rm T}\mathcal C$, i.e.,
$\left(\begin{smallmatrix}1 & 0 \\0 & 1 \\0 & 0 \end{smallmatrix}\right)\in {\rm TCoFib}.$
Thus $(f,hp', t)\in \TFib$, by Lemma \ref{TCoFibTFib}. Commutative diagram
\[
\xymatrix{
A\oplus U\ar[r]^-{\left(\begin{smallmatrix}1 & 0 \\0 & 0 \\0 & 1 \end{smallmatrix}\right)}\ar[d]_-{(f,t)} &A\oplus \Coker g\oplus U\ar[r]^-{\left(\begin{smallmatrix}1 & 0 &0 \\0 & t' & 1  \end{smallmatrix}\right)} \ar[d]_-{(f,hp', t)} & A\oplus U\ar[d]^-{(f,t)}  \\
B\ar@{=}[r]&B\ar@{=}[r]&B
}
\]
shows that $(f,t)$ is a retract of $(f, hp',t)$. Thus $(f,t)\in \TFib$.
\end{proof}

\begin{lem}\label{third2outof3} \ With the same assumption and notations as above.
Let $u: X\longrightarrow Y$ and $v:Y\longrightarrow Z$. Suppose that $v, vu\in \TFib$ and $u\in \CoFib$, then $u\in \TCoFib$.
\end{lem}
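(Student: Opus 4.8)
The plan is to show that $u$ is a splitting monomorphism whose cokernel lies in ${\rm T}\mathcal C$; by the description of $\TCoFib$ in $(1.1)$ this is precisely the conclusion. First I would use the lifting property: since $u\in\CoFib$ and $vu\in\TFib={\rm RLP}(\CoFib)$ (Lemma \ref{lemFWFS}(1)), the commutative square with top edge ${\rm Id}_X$, left edge $u$, right edge $vu$ and bottom edge $v$ (it commutes because $vu\circ{\rm Id}_X=vu=v\circ u$) admits a lifting $w\colon Y\longrightarrow X$ with $wu={\rm Id}_X$ and $(vu)w=v$. In particular $u$ is a splitting monomorphism, and, crucially, $v$ factors as $v=vuw$.

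Next, since $\mathcal A$ is weakly idempotent complete, $w$ has a kernel and $\Ker w=\Coker u$ by Definition-Fact \ref{defnfact}. Put $K=\Coker u$ and let $\kappa\colon K\longrightarrow Y$ be the kernel of $w$, so $w\kappa=0$; as $w$ is a splitting epimorphism with kernel $\kappa$, the sequence $0\longrightarrow K\stackrel{\kappa}\longrightarrow Y\stackrel{w}\longrightarrow X\longrightarrow 0$ splits and $\kappa$ has a retraction $\rho\colon Y\longrightarrow K$ with $\rho\kappa={\rm Id}_K$. Moreover $K\in\mathcal C$: indeed $u\in\CoFib$ has a cokernel and $\mathcal A$ is pointed, so $0\longrightarrow K$ lies in $\CoFib$ by Lemma \ref{lemFWFS}(3). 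Finally, using $v=vuw$ and $w\kappa=0$ we get $v\kappa=vuw\kappa=0$.

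It remains to prove $K\in\mathcal W$, that is, $K\longrightarrow 0$ lies in $\TFib={\rm RLP}(\CoFib)$; this is the main point of the argument. Given any $\phi\colon P\longrightarrow Q$ in $\CoFib$ and any $a\colon P\longrightarrow K$, the square with top edge $\kappa a\colon P\longrightarrow Y$, left edge $\phi$, right edge $v$ and bottom edge the zero morphism $Q\longrightarrow Z$ commutes (since $v\kappa a=0$), and $\phi\ \square\ v$ provides a lifting $\ell\colon Q\longrightarrow Y$ with $\ell\phi=\kappa a$; then $\rho\ell\colon Q\longrightarrow K$ satisfies $(\rho\ell)\phi=\rho\kappa a=a$. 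Hence every $a$ extends along $\phi$, so $K\longrightarrow 0$ has the right lifting property with respect to $\CoFib$, i.e.\ $K\in\mathcal W$. Together with $K\in\mathcal C$ this gives $K\in{\rm T}\mathcal C$, and since $u$ is a splitting monomorphism with $\Coker u=K\in{\rm T}\mathcal C$ we conclude $u\in\TCoFib$. The one step that is not pure bookkeeping is recognizing that the relation $v=vuw$ extracted from the first lifting forces $v\kappa=0$, which is exactly what makes the second lifting argument (and hence $K\in\mathcal W$) work; everything else follows from splitting short exact sequences and the defining properties of a fibrantly weak factorization system.
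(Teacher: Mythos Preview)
Your proof is correct and follows essentially the same strategy as the paper: obtain a retraction $w$ of $u$ by lifting against $vu\in\TFib$, then show $\Coker u\in{\rm T}\mathcal C$ by producing extensions along an arbitrary cofibration via a lifting against $v\in\TFib$. Your version is in fact slightly more streamlined: by taking the section of the cokernel to be the kernel $\kappa$ of $w$ you get $v\kappa=0$ directly from $v=vuw$, whereas the paper works with an arbitrary section $c'$ and first corrects it via an auxiliary lifting $h$ (with $vuh=vc'$) so that $v(c'-uh)=0$ before running the same extension argument.
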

\begin{proof} \ The commutative diagram
  \[
  \xymatrix{
  X\ar@{=}[r]\ar[d]_-u&X\ar[d]^-{vu}\\
  Y\ar[r]^-v\ar@{..>}[ur]^-s&Z
  }
  \]
  gives a lifting $s$ such that $su={\rm Id}_X$ and $vus=v$. Thus $u$ is a splitting monomorphism. Then there are morphisms
  $\xymatrix{Y\ar@<0.5ex>[r]^-c &\Coker u\ar@<0.5ex>[l]^-{c'}}$ such that $cc'={\rm Id}_{\Coker u}$, \ $c u=0$, \ $sc'=0$ and $c'c+us={\rm Id}_Y$. Note that $0\longrightarrow \Coker u$ is in $\CoFib$ by Lemma \ref{lemFWFS}(3).
  Thus  commutative diagram
  \[
  \xymatrix{
  0\ar@{=}[r]\ar[d]&X\ar[d]^-{vu}\\
  \Coker u\ar[r]^-{vc'}\ar@{..>}[ur]^-h&Z
  }
  \]
  gives  a lifting $h: \Coker u\longrightarrow X$ such that $vuh=vc'$. To prove that $\Coker u\in {\rm T}\mathcal C$, it suffice to prove that $\Coker u \longrightarrow 0\in \TFib$. For $i:M\longrightarrow N\in \CoFib$ and $l: M\longrightarrow \Coker u$, the commutative diagram
\[
\xymatrix@R=0.5cm{
M\ar[r]^-{(c'-uh)l}\ar[d]_-{i} & Y\ar[d]^-{v}\\
N\ar[r]^-0\ar@{..>}[ur]^-\delta & Z
}
\]
gives a lifting $\delta: N\longrightarrow Y$ such that $\delta i=(c'-uh)l$. Then there is a commutative diagram
\[
\xymatrix@R=0.5cm{
M\ar[r]^-{l}\ar[d]_-{i} & \Coker u\ar[d]\\
N\ar[r]\ar@{..>}[ur]^-{c\delta}  & 0
}
\]
where $c\delta i= c(c'-uh)l=l$. Thus $\Coker u \longrightarrow 0$ is in $\TFib$, by Lemma \ref{lemFWFS}(1). Then $u\in \TCoFib$ by definition.
\end{proof}

\subsection{\bf Proof of Theorem \ref{mainthm}} \ Assume that $(\CoFib, \Fib, \Weq)$ is a fibrant model structure on $\mathcal A$.
Then by Proposition \ref{fibrantFWFS}, $(\CoFib, \TFib)$ is a fibrantly weak factorization system on $\mathcal A$; and by Theorem \ref{fibrant} one has
\begin{align*}\Fib &= \{ \text{morphism} \ f \ | \ f \ \mbox{is} \ \Hom_{\mathcal A}({\rm T}\mathcal C, -)\mbox{-epic}\}\\
\TCoFib &=\{\text{splitting monomorphism} \ f \ |\ \Coker f\in {\rm T}\mathcal C \}.
\end{align*}

Conversely, assume that $(\CoFib, \TFib)$ is a fibrantly weak factorization system on $\mathcal A$. We will prove that $(\CoFib, \Fib, \Weq)$ is a model structure on $\mathcal A$,
where $\Fib$ and $\Weq$ are given as in $(1.1)$.

\vskip5pt

{\bf Retract axiom} \ Since $(\CoFib, \TFib)$ is a fibrantly weak factorization system, by definition $\CoFib$ is closed under retracts.
Let $g$ be a retract of $f$, i.e., there is a commutative diagram
\[
\xymatrix@R=0.5cm{
    A'\ar[r]^{\varphi_1}\ar[d]_g & A\ar[r]^{\psi_1}\ar[d]_{f} & A'\ar[d]^-{g} \\
    B'\ar[r]^{\varphi_2} & B\ar[r]^{\psi_2} & B'
}
\]
such that   $\psi_1 \varphi_1 = {\rm Id}_{A'}$ and $\psi_2 \varphi_2 = {\rm Id}_{B'}$.

\vskip5pt

Suppose that $f\in \Fib$. For $U\in {\rm T}\mathcal C$, by definition $\Hom_{\mathcal{A}}(U, f)$ is epic.
Also $\Hom_{\mathcal{A}}(U, \psi_2)$ is epic since $\psi_2$ is a splitting epimorphism. Thus
by equality $$\Hom_{\mathcal{A}}(U, g)\circ\Hom_{\mathcal{A}}(U, \psi_1) = \Hom_{\mathcal{A}}(U, \psi_2)\circ\Hom_{\mathcal{A}}(U, f)$$ one sees that
$\Hom_{\mathcal{A}}(U, g)$ is epic. Thus $g\in \Fib$.

\vskip5pt

Suppose that $f\in \Weq$. Since $(\CoFib, \TFib)$ is a fibrantly weak factorization system, by definition
${\rm T}\mathcal C$ is contravariantly  finite in $\mathcal A$. Choose a right ${\rm T}\mathcal C$-approximation $t:U\longrightarrow B'$ and a right ${\rm T}\mathcal C$-approximation $s:V\longrightarrow B$.
Consider $\psi_2s: V\longrightarrow B'$. Then there is a morphism $h:V\longrightarrow U$ such that $\psi_2s=th$. Note that $(s, \varphi_2t):V\oplus U\longrightarrow B$ is also a right ${\rm T}\mathcal C$-approximation. Thus $(f,s,\varphi_2t): A\oplus V\oplus U\longrightarrow B\in \TFib$ by Lemma \ref{Weq}.  The following commutative diagram
\[
\xymatrix{
A'\oplus U\ar[r]^-{\left(\begin{smallmatrix}\varphi_1 & 0 \\0 & 0 \\0 & 1 \end{smallmatrix}\right)}\ar[d]_-{(g,t)} &A\oplus V\oplus U\ar[r]^-{\left(\begin{smallmatrix}\psi_1 & 0 &0 \\0 & h & 1  \end{smallmatrix}\right)} \ar[d]^-{(f,s,\varphi_2t)} & A'\oplus U\ar[d]^-{(g,t)}  \\
B'\ar[r]^-{\varphi_2}&B\ar[r]^-{\psi_2}&B'
}
\]
shows that $(g,t)$ is a retract of $(f,s,\varphi_2t)$. Since $(\CoFib, \TFib)$ is a weak factorization system, by definition $\TFib$ is closed under retracts.
Thus $(g,t)\in \TFib$. Since $\left(\begin{smallmatrix} 1 \\ 0\end{smallmatrix}\right): A\longrightarrow A\oplus U\in \TCoFib$
it follows that $g=(g,t)\left(\begin{smallmatrix} 1 \\ 0\end{smallmatrix}\right)\in \TFib\circ \TCoFib$.

\vskip10pt

{\bf Lifting axiom} \ \ Let
\[
\xymatrix@R=0.5cm{
A\ar[r]^-{f}\ar[d]_-{i} & C\ar[d]^-{p}\\
B\ar[r]^-{g} & D
}
\]
be a commutative square with $i\in \CoFib$ and $p\in \Fib$.

\vskip5pt

If $p\in \Weq$, then $p\in \Fib\cap \Weq = \TFib$, by Lemma \ref{intersection2}. Since $(\CoFib, \TFib)$ is a weak factorization system, by definition
there is a lifting $t: B\longrightarrow C$ such that $ti = f$ and $pt = g$.

\vskip5pt

If $i\in \Weq$, then $i\in \CoFib\cap \Weq = \TCoFib$, by Lemma \ref{intersection1}. By Lemma \ref{lifting}, there is a lifting
$t: B\longrightarrow C$ such that $ti = f$ and $pt = g$.

\vskip10pt

{\bf Factorization axiom} \ One only needs to prove that any morphism $f:A\longrightarrow B$ has a factorization $f=pi$ where $i\in \TCoFib$ and $p\in \Fib$. Since ${\rm T}\mathcal C$ is contravariantly finite, there is a right ${\rm T}\mathcal C$-approximation $t: U \longrightarrow B$. Note that $(f, t): A\oplus U \longrightarrow B$ is $\Hom_{\mathcal{A}}({\rm T}\mathcal C, -)$-epic. By definition $(f, t)\in \Fib.$ Since $\left(\begin{smallmatrix} 1 \\ 0\end{smallmatrix}\right): A\longrightarrow A\oplus U$ is in $\TCoFib$, it follows that $f=(f, t)\left(\begin{smallmatrix} 1 \\ 0\end{smallmatrix}\right)$ is the desired factorization.

\vskip5pt

{\bf Two out of three axiom} \ Let $f:A\longrightarrow B$ and $g:B\longrightarrow C$.

\vskip5pt

Suppose that $f, g\in \Weq$. Since $\Weq  =\TFib\circ \TCoFib$ and $\TCoFib$ is the class of splitting monomorphisms  with cokernel in ${\rm T}\mathcal C$, without loss of
generality, one may assume that there are  morphisms $(f, t_1): A\oplus U_1 \longrightarrow B$ and $(g, t_2):  B\oplus U_2 \longrightarrow C$ in $\TFib$,  with $U_1, U_2\in {\rm T}\mathcal C$. Then
\[
gf =(gf, g t_1 , t_2)\left(\begin{smallmatrix} 1\\0\\0\end{smallmatrix}\right)
\]
where $\left(\begin{smallmatrix} 1\\0\\0\end{smallmatrix}\right): A \longrightarrow A\oplus U_1\oplus U_2$ is in ${\rm TCoFib}$, and $(gf, g t_1 , t_2): A\oplus U_1\oplus U_2\longrightarrow C$. See the following diagram.

\[\xymatrix@R=0.5cm{
A\ar[dr]_-{\left(\begin{smallmatrix} 1\\ 0 \end{smallmatrix}\right)}\ar[rr]^-{f}& & B\ar[dr]_-{\left(\begin{smallmatrix}1\\ 0   \end{smallmatrix}\right)}\ar[rr]^-{g}&&C\\
&A\oplus U_1\ar[dr]_(.4){\left(\begin{smallmatrix}1&0\\0& 1\\0&0   \end{smallmatrix}\right)}\ar[ur]_-{\left(\begin{smallmatrix} f, t_1 \end{smallmatrix}\right)}&&B\oplus U_2\ar[ur]_-{\left(\begin{smallmatrix} g, t_2\end{smallmatrix}\right)}&\\
&&A\oplus U_1\oplus U_2\ar[ur]_(.65){\left(\begin{smallmatrix}f & t_1 & 0\\ 0 &0 &1   \end{smallmatrix}\right)} &&
}
\]
Since $\left(\begin{smallmatrix}f & t_1 & 0\\ 0 &0 &1   \end{smallmatrix}\right) = (f, t_1)\oplus {\rm Id}_{U_2}$ and $\TFib$ is closed under finite coproducts (cf. Lemma \ref{lemFWFS}(2)), it follows that $\left(\begin{smallmatrix}f & t_1 & 0\\ 0 &0 &1   \end{smallmatrix}\right)\in \TFib$.
Since $(gf, g t_1 , t_2)=(g,t_2)\left(\begin{smallmatrix}f & t_1 & 0\\ 0 &0 &1   \end{smallmatrix}\right)$
and $\TFib$ is closed under compositions (cf. Lemma \ref{lemFWFS}(2)), it follows that $(gf, g t_1 , t_2)=(g,t_2)\left(\begin{smallmatrix}f & t_1 & 0\\ 0 &0 &1   \end{smallmatrix}\right)\in \TFib$. Thus $gf  =(gf, g t_1 , t_2)\left(\begin{smallmatrix} 1\\0\\0\end{smallmatrix}\right)\in \TFib\circ \TCoFib = \Weq.$

\vskip5pt

Suppose that $f, gf\in \Weq$. Decompose $f=pi$  with $i:A\longrightarrow D$ in $\TCoFib$ and $p:D\longrightarrow B$ in $\TFib$. Let $t:U\longrightarrow C$ be a right ${\rm T}\mathcal C$-approximation. Then there is a commutative diagram
\[
\xymatrix{
& A\oplus U\ar[dl]_-{\left(\begin{smallmatrix} i & 0 \\0 & 1  \end{smallmatrix}\right)}\ar[d]^-{(gf,t)}\\
D\oplus U\ar[r]^-{(gp,t)}&C
}
\]
where $\left(\begin{smallmatrix} i & 0 \\0 & 1  \end{smallmatrix}\right)\in \TCoFib$, and $(gf,t)\in \TFib$ by Lemma \ref{Weq}. Thus $(gp,t)\in \TFib$ by Lemma \ref{TCoFibTFib}.
Consider $\left(\begin{smallmatrix} p & 0 \\0 & 1  \end{smallmatrix}\right): D\oplus U\longrightarrow B\oplus U$ and $(g, t): B\oplus U\longrightarrow C$.
Since $$(gp,t) = (g,t)\left(\begin{smallmatrix} p & 0 \\0 & 1  \end{smallmatrix}\right)= (g,t)(p\oplus {\rm Id}_U)$$
with $p\oplus {\rm Id}_U\in \TFib$ and $(gp,t)\in \TFib$, it follows from Definition \ref{FWFS}(2) that $(g,t)\in \TFib$. Since $g=(g,t)\left(\begin{smallmatrix} 1  \\0   \end{smallmatrix}\right)$, where
$\left(\begin{smallmatrix} 1  \\0   \end{smallmatrix}\right): B\longrightarrow B\oplus U$ is in $\TCoFib$, it follow that
$g\in \TFib \circ \TCoFib = \Weq$.

\vskip5pt

Finally, suppose that $g, gf\in \Weq$. Decompose $f=pi$ with $i:A\longrightarrow D$ in $\CoFib$ and $p:D\longrightarrow B$ in $\TFib$.
Then $p\in \TFib = \Fib\cap \Weq$, by Lemma \ref{intersection2}. Thus $gp\in \Weq$, as proven before. Let $t:U\longrightarrow C$ be a right ${\rm T}\mathcal C$-approximation. Then $\left(\begin{smallmatrix} i & 0 \\0 & 1  \end{smallmatrix}\right) = i\oplus {\rm Id}_U: A\oplus U\longrightarrow D\oplus U$ is in $\CoFib$, by Lemma \ref{lemFWFS}(2);  and  $(gp,t): D\oplus U\longrightarrow C\in \TFib$, by Lemma \ref{Weq};  and $(gf,t): A\oplus U\longrightarrow C$ is in $\TFib$, again by Lemma \ref{Weq}. Since
$$(gp, t)\left(\begin{smallmatrix} i & 0 \\0 & 1 \end{smallmatrix}\right) = (gf, t)\in \TFib$$
it follows from Lemma \ref{third2outof3} that $\left(\begin{smallmatrix} i & 0 \\0 & 1 \end{smallmatrix}\right)\in \TCoFib$, i.e., $\left(\begin{smallmatrix} i & 0 \\0 & 1 \end{smallmatrix}\right)$ is a splitting monomorphism, and hence $i$ is a splitting monomorphism with $\Coker i = \Coker \left(\begin{smallmatrix} i & 0 \\0 & 1 \end{smallmatrix}\right)\in {\rm T}\mathcal C$. Thus $i\in \TCoFib$. This shows that $f=pi\in \TFib \circ \TCoFib =\Weq$.

\vskip5pt

Up to now,  $(\CoFib, \Fib, \Weq)$ has proven to be a model structure. It is a fibrant model structure, by Theorem \ref{fibrant}. \hfill $\square$

\vskip5pt

\subsection {\bf Proof of Theorem \ref{correspondence}} \ Recall that $\Omega$ is the class of fibrantly weak factorization systems on $\A$,
and $\Gamma$ is the class of fibrant model structures on $\A$, and that
$$\Phi: \Omega \longrightarrow \Gamma \ \ \ \mbox{given by} \ \ \ (\CoFib, \TFib)\mapsto (\CoFib, \Fib, \Weq)$$
where $\Fib$ and $\Weq$ are given as in $(1.1)$,  and the map $$\Psi: \Gamma\longrightarrow \Omega  \ \ \ \mbox{given by} \ \ \
(\CoFib, \Fib, \Weq)\mapsto (\CoFib, \TFib)$$ Theorem \ref{mainthm} shows that ${\rm Im} \Phi\subseteq \Gamma$ and ${\rm Im} \Psi\subseteq \Omega$.
By Lemma \ref{intersection2} one has $\Psi\circ \Phi={\rm Id}_{\Omega}$. Let $(\CoFib, \Fib, \Weq)$ be a fibrant model structure.
By Theorem \ref{fibrant} one has
\begin{align*}
\Fib & = \{ \text{morphism} \ f \ | \ f \ \mbox{is} \ \Hom_{\mathcal A}({\rm T}\mathcal C, -)\mbox{-epic}\}\\
\TCoFib & =\{\text{splitting monomorphism} \ f \ |\ \Coker f\in {\rm T}\mathcal C \}.\end{align*}
Thus, by construction,  the image of $(\CoFib, \Fib, \Weq)$ under $\Phi\circ \Psi$ is just
$$(\CoFib, \Fib, \TFib\circ \TCoFib) = (\CoFib, \Fib, \Weq).$$
Thus $\Phi\circ \Psi={\rm Id}_{\Gamma}$. \hfill $\square$


\vskip5pt

\section{\bf Two applications}

Applying Theorem \ref{mainthm} to two special cases, we rediscover $\omega$-model structures and $\mathcal W$-model structures.
Both of them are fibrant, and not exact in general. We will analyze their relationships with exact model structures.

\subsection{$\omega$-model structures}

Let $\mathcal A$ be a weakly idempotent complete exact category,
$\mathcal{X}$ and $\mathcal{Y}$ full additive subcategories which are closed under direct summands and isomorphisms,
and $\omega =\mathcal{X}\cap \mathcal{Y}$.
Put
\begin{equation}\begin{aligned}\CoFib_\omega & = \{\text{inflation} \ f \ | \ \Coker f\in \mathcal X\}\\
\Fib_\omega & = \{\text{morphism} \ f \ | \ f \ \mbox{is} \ \Hom_{\mathcal A}(\omega, -)\mbox{-epic}\}\\
\TCoFib_\omega & = \{\text{splitting monomorphism} \ f \ | \ \Coker f\in \omega\}\\
\TFib_\omega & =\{\text{deflation} \ f \ | \ \Ker f\in \mathcal Y\}\\
\Weq_\omega & = \TFib_\omega \circ \TCoFib_\omega.\end{aligned} \end{equation}

\begin{thm}\label{omega-modelstructure} With the same notations as above, then $({\rm CoFib}_{\omega}, \ {\rm Fib}_{\omega}, \ {\rm Weq}_{\omega})$ given in $(5.1)$ is a  model structure with
$$\TCoFib_\omega  =  \CoFib_\omega\cap \Weq_\omega \ \ \mbox{and} \ \  \TFib_\omega  =  \Fib_\omega\cap \Weq_\omega$$ if and only if
$(\mathcal{X},\mathcal{Y})$ is a hereditary complete cotorsion pair in $\mathcal{A}$, and $\omega$ is contravariantly finite in $\mathcal A$.
If this is the case, then the class of cofibrant objects is $\mathcal X,$ the class of fibrant objects is $\mathcal A,$
and the class of trivial objects is $\mathcal Y;$ and ${\rm Ho}(\mathcal A)\cong \mathcal X/\omega$ as additive categories.
\end{thm}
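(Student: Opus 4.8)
The plan is to deduce the theorem from Theorems \ref{mainthm}, \ref{fibrant} and Proposition \ref{fibrantFWFS}, after the observation that the candidate triple $(5.1)$, whenever it happens to be a model structure, is automatically \emph{fibrant}: for any object $X$ the morphism $X\longrightarrow 0$ is $\Hom_{\mathcal A}(\omega,-)$-epic since $\Hom_{\mathcal A}(U,0)=0$, so $X\in\mathcal F$ and $\mathcal F=\mathcal A$. Next, for the pair $(\CoFib_\omega,\TFib_\omega)$ one reads off the object classes of $(1.1)$: as $0\longrightarrow X$ is an inflation with cokernel $X$ and $X\longrightarrow 0$ is a deflation with kernel $X$, one gets $\mathcal C=\mathcal X$ and $\mathcal W=\mathcal Y$, hence ${\rm T}\mathcal C=\mathcal X\cap\mathcal Y=\omega$; consequently the classes $\Fib$, $\TCoFib$, $\Weq$ produced from $(\CoFib_\omega,\TFib_\omega)$ by $(1.1)$ coincide with $\Fib_\omega$, $\TCoFib_\omega$, $\Weq_\omega$. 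By Theorem \ref{mainthm} and Proposition \ref{fibrantFWFS}, the asserted equivalence is then the same as: \emph{$(\CoFib_\omega,\TFib_\omega)$ is a fibrantly weak factorization system on $\mathcal A$ if and only if $(\mathcal X,\mathcal Y)$ is a hereditary complete cotorsion pair and $\omega$ is contravariantly finite in $\mathcal A$}. Granting this, Lemmas \ref{intersection1} and \ref{intersection2} give $\TCoFib_\omega=\CoFib_\omega\cap\Weq_\omega$ and $\TFib_\omega=\Fib_\omega\cap\Weq_\omega$, the classes of cofibrant, fibrant and trivial objects are $\mathcal X$, $\mathcal A$, $\mathcal Y$ (the last one checked directly from $\Weq_\omega=\TFib_\omega\circ\TCoFib_\omega$), and the last part of Theorem \ref{fibrant} gives $\Ho(\mathcal A)\cong\mathcal C/{\rm T}\mathcal C=\mathcal X/\omega$.

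For the ``if'' direction I would verify the three conditions of Definition \ref{FWFS} for $(\CoFib_\omega,\TFib_\omega)$. Condition $(1)$ is the standard passage from a complete cotorsion pair to a weak factorization system: the lifting $l\,\square\,r$ follows from $\Ext^1_{\mathcal A}(\mathcal X,\mathcal Y)=0$; the factorizations of an arbitrary morphism come from the special $\mathcal X$-precovers and special $\mathcal Y$-preenvelopes supplied by completeness; and closure under retracts follows from $\mathcal X,\mathcal Y$ being closed under summands together with the fact that, in a weakly idempotent complete exact category, a retract of an inflation (resp. deflation) is again an inflation (resp. deflation) — I would cite \cite{S} (and \cite{Bu}) for this. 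Condition $(2)$ is where hereditariness enters: if $f\colon A\to B$ and $gf\colon A\to C$ are deflations with kernels in $\mathcal Y$, then $g$ is a deflation (by weak idempotent completeness, cf. \cite{Bu}), and the Noether isomorphism yields a conflation $\Ker f\rightarrowtail\Ker gf\twoheadrightarrow\Ker g$; since a hereditary cotorsion pair has $\mathcal Y$ closed under cokernels of inflations between its objects, $\Ker g\in\mathcal Y$, so $g\in\TFib_\omega$. Condition $(3)$ holds because $\mathcal L\cap\mathcal R=\mathcal X\cap\mathcal Y=\omega$ is contravariantly finite by hypothesis. Theorem \ref{mainthm} then produces the model structure.

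For the ``only if'' direction, suppose $(\CoFib_\omega,\Fib_\omega,\Weq_\omega)$ is a model structure with $\TCoFib_\omega=\CoFib_\omega\cap\Weq_\omega$ and $\TFib_\omega=\Fib_\omega\cap\Weq_\omega$. As noted it is fibrant, so Proposition \ref{fibrantFWFS} makes $(\CoFib_\omega,\TFib_\omega)$ a fibrantly weak factorization system. Applying the Factorization axiom to $0\longrightarrow B$ and to $A\longrightarrow 0$ produces conflations $Y'\rightarrowtail Z\twoheadrightarrow B$ with $Z\in\mathcal X$, $Y'\in\mathcal Y$, and $A\rightarrowtail W\twoheadrightarrow X'$ with $W\in\mathcal Y$, $X'\in\mathcal X$, i.e. special $\mathcal X$-precovers and special $\mathcal Y$-preenvelopes. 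Lifting ${\rm Id}_X$ in the square whose left edge is $0\longrightarrow X\in\CoFib_\omega$ and whose right edge is a deflation $E\twoheadrightarrow X\in\TFib_\omega$ shows that every conflation $Y\rightarrowtail E\twoheadrightarrow X$ with $X\in\mathcal X$, $Y\in\mathcal Y$ splits, so $\Ext^1_{\mathcal A}(\mathcal X,\mathcal Y)=0$; combining this with the special precovers/preenvelopes and the summand-closedness of $\mathcal X$, $\mathcal Y$ gives ${}^{\perp}\mathcal Y=\mathcal X$ and $\mathcal X^{\perp}=\mathcal Y$, so $(\mathcal X,\mathcal Y)$ is a complete cotorsion pair. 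For hereditariness: given a conflation $Y_1\rightarrowtail Y_2\twoheadrightarrow Y_3$ with $Y_1,Y_2\in\mathcal Y$, both $Y_2\twoheadrightarrow Y_3$ and $Y_2\longrightarrow 0$ lie in $\TFib_\omega$, so Definition \ref{FWFS}$(2)$ forces $Y_3\longrightarrow 0\in\TFib_\omega$, i.e. $Y_3\in\mathcal Y$; thus $\mathcal Y$ is closed under cokernels of inflations between its objects, which for a complete cotorsion pair is equivalent to being hereditary. Finally $\omega=\mathcal X\cap\mathcal Y=\mathcal L\cap\mathcal R$ is contravariantly finite by Definition \ref{FWFS}$(3)$.

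The hard part will be the first half of condition $(1)$ in the ``if'' direction — producing, from the completeness of $(\mathcal X,\mathcal Y)$, a $(\CoFib_\omega,\TFib_\omega)$-factorization of an \emph{arbitrary} morphism (not merely of $0\longrightarrow B$ and $B\longrightarrow 0$), together with the retract-closure argument that genuinely invokes weak idempotent completeness. Everything else is bookkeeping through the correspondences already established in Theorems \ref{fibrant} and \ref{mainthm} and their supporting lemmas.
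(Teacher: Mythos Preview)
Your proposal is correct and, for the ``if'' direction, follows exactly the route the paper sketches: verify that $(\CoFib_\omega,\TFib_\omega)$ is a fibrantly weak factorization system and then invoke Theorem~\ref{mainthm} (the paper cites Theorem~\ref{hoA} rather than Theorem~\ref{fibrant} for $\Ho(\mathcal A)\cong\mathcal X/\omega$, but these agree here). The paper does not actually prove the ``only if'' direction at all --- it simply refers to \cite[Theorem 1.1]{CLZ} for a complete proof --- whereas you extract it internally from Proposition~\ref{fibrantFWFS}: once the model structure is fibrant, $(\CoFib_\omega,\TFib_\omega)$ is a fibrantly weak factorization system, and you read off completeness from the factorization axiom, $\Ext^1(\mathcal X,\mathcal Y)=0$ from the lifting axiom, hereditariness from condition $(2)$ of Definition~\ref{FWFS}, and contravariant finiteness of $\omega$ from condition $(3)$. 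This is a genuine addition over what the paper itself supplies, and it shows that the FWFS formalism captures both directions cleanly. Your identification of the factorization of an arbitrary morphism as the only substantive verification in the ``if'' part is accurate; this is the standard pushout/pullback argument from a complete cotorsion pair (as in \cite{S} or \cite{H2}), and everything else is indeed bookkeeping through the correspondences already set up.
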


\vskip5pt

Theorem \ref{omega-modelstructure} is initiated by A. Beligiannis and I. Reiten [BR, VIII, Theorem 4.2] on abelian categories.
It was called {\it an $\omega$-model structure}. The present form is given in [CLZ].
Theorem \ref{mainthm} gives a shorter proof of the ``if" part of Theorem \ref{omega-modelstructure}:
One can verifies that $(\CoFib_\omega, \TFib_\omega)$ is a fibrantly weak factorization system on $\mathcal{A}$,
and then $({\rm CoFib}_{\omega}, \ {\rm Fib}_{\omega}, \ {\rm Weq}_{\omega})$ is a fibrant model structure on $\mathcal{A}$, by Theorem \ref{mainthm};
also, ${\rm Ho}(\mathcal A)\cong \mathcal X/\omega$ as additive categories, by Theorem \ref{hoA}. For a complete proof we refer to \cite[Theorem 1.1]{CLZ}.

\vskip5pt

A fibrant model structure on an exact category is {\it weakly projective} if
cofibrations are exactly inflations with cofibrant cokernel, and each trivial fibration is a deflation; or equivalently,
trivial fibrations are exactly deflations with trivially fibrant kernel, and each cofibration is an inflation.
This is also equivalent to say that it is a fibrant model structure such that $(\mathcal{C}, {\rm T}\mathcal{F})$ is a complete cotorsion pair,
where $\mathcal C$ is the class of cofibrant objects, and $\rm T\mathcal{F}$ is the class of trivially fibrant objects.
See \cite[Proposition 5.2]{CLZ}.

\vskip5pt

As in abelian categories (\cite[VIII, Theorem 4.6]{BR}),
$\omega$-model structures are precisely weakly projective model structures,
on a weakly idempotent complete exact category, as shown by the following correspondence of Beligiannis and Reiten.

\begin{thm} \label{brcorrespondence} \ {\rm ([CLZ, Theorem 1.3])} \ Let $\A$ be a weakly idempotent complete exact category.
Then there is a one-to-one correspondence between hereditary complete cotorsion pairs with kernel $\omega$ contravariantly finite in $\A$,
and weakly projective model structures on $\A$, given by
$(\X,\Y)\mapsto (\CoFib_{\omega}, \Fib_{\omega}, \Weq_{\omega})$, with the inverse $(\CoFib, \Fib, \Weq)\mapsto (\mathcal{C}, \rm T\mathcal{F}).$
\end{thm}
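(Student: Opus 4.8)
The plan is to verify that $\Phi$ and $\Psi$ are well-defined maps between $\Omega$ and $\Gamma$ and that they are mutually inverse, leaning on Theorems \ref{fibrant} and \ref{omega-modelstructure} and on the characterization of weakly projective model structures recorded above (see \cite[Proposition 5.2]{CLZ}). Throughout I use that, since the structures in question are fibrant, $\mathcal F=\A$, so that ${\rm T}\mathcal F=\mathcal F\cap\mathcal W=\mathcal W$ and ${\rm T}\mathcal C=\mathcal C\cap\mathcal W=\mathcal C\cap{\rm T}\mathcal F$.

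First I would show $\Phi(\Omega)\subseteq\Gamma$ and $\Psi\circ\Phi=\Id_\Omega$. Let $(\X,\Y)$ be a hereditary complete cotorsion pair with $\omega=\X\cap\Y$ contravariantly finite. By Theorem \ref{omega-modelstructure}, $(\CoFib_\omega,\Fib_\omega,\Weq_\omega)$ is a fibrant model structure whose classes of cofibrant, fibrant and trivial objects are $\X$, $\A$ and $\Y$; hence ${\rm T}\mathcal F=\Y$ and ${\rm T}\mathcal C=\omega$. Reading off $(5.1)$, $\CoFib_\omega$ is exactly the class of inflations with cokernel in $\X=\mathcal C$ and $\TFib_\omega$ is exactly the class of deflations with kernel in $\Y={\rm T}\mathcal F$; thus cofibrations are precisely inflations with cofibrant cokernel and every trivial fibration is a deflation, i.e.\ this model structure is weakly projective. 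Consequently $\Psi(\Phi(\X,\Y))=(\mathcal C,{\rm T}\mathcal F)=(\X,\Y)$.

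Next I would show $\Psi(\Gamma)\subseteq\Omega$. Let $(\CoFib,\Fib,\Weq)$ be a weakly projective model structure. Then ${\rm T}\mathcal C=\mathcal C\cap{\rm T}\mathcal F$ is contravariantly finite by Lemma \ref{object}(1), and $(\mathcal C,{\rm T}\mathcal F)=(\mathcal C,\mathcal W)$ is a complete cotorsion pair by \cite[Proposition 5.2]{CLZ} (alternatively one derives this directly: the special precover and preenvelope sequences come from the factorization axiom applied to $0\to X$ and $X\to 0$, the vanishing $\Ext^1(\mathcal C,\mathcal W)=0$ comes from the lifting axiom, and maximality from closure of $\mathcal C,\mathcal W$ under summands, Lemmas \ref{object}(3) and \ref{summand}). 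It remains to see this cotorsion pair is hereditary; since $\A$ is weakly idempotent complete, it suffices to prove $\mathcal W$ is closed under cokernels of inflations between objects of $\mathcal W$ (the standard reformulation of heredity, cf.\ \cite{S}). So let $0\to W_1\to W_2\stackrel{p}{\longrightarrow}Q\to 0$ be an admissible exact sequence with $W_1,W_2\in\mathcal W$. The deflation $p$ has kernel $W_1\in\mathcal W={\rm T}\mathcal F$, so $p\in\TFib\subseteq\Weq$; since $W_2\to 0$ is a weak equivalence and equals $(Q\to 0)\circ p$, the Two out of three axiom forces $Q\to 0\in\Weq$, that is $Q\in\mathcal W$. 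Hence $(\mathcal C,{\rm T}\mathcal F)\in\Omega$.

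Finally I would prove $\Phi\circ\Psi=\Id_\Gamma$. Given a weakly projective model structure $(\CoFib,\Fib,\Weq)$, put $\X=\mathcal C$, $\Y={\rm T}\mathcal F$, $\omega={\rm T}\mathcal C$, and match the five classes of $(5.1)$ against the original ones: $\CoFib_\omega$ (inflations with cokernel in $\mathcal C$) equals $\CoFib$ by the definition of weak projectivity; $\Fib_\omega$ equals $\Fib$ and $\TCoFib_\omega$ equals $\TCoFib$ by Theorem \ref{fibrant}(3) and (2); $\TFib_\omega$ (deflations with kernel in ${\rm T}\mathcal F$) equals $\TFib$ by the equivalent form in the definition of weak projectivity; and $\Weq_\omega=\TFib_\omega\circ\TCoFib_\omega=\TFib\circ\TCoFib=\Weq$ by Lemma \ref{morphism}(1). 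Therefore $\Phi$ and $\Psi$ are mutually inverse. The only genuinely non-formal step is the heredity of $(\mathcal C,{\rm T}\mathcal F)$, and as the argument shows it collapses to one application of the Two out of three axiom once one observes that the deflation in the test sequence is a trivial fibration; the remaining work is simply to keep straight the identifications ${\rm T}\mathcal F=\mathcal W$ and ${\rm T}\mathcal C=\omega$ throughout.
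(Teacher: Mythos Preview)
The paper does not supply its own proof of this theorem; it is quoted verbatim from \cite[Theorem 1.3]{CLZ} and nothing further is said about it. Your argument, by contrast, is a genuine proof and is correct. You correctly derive from Theorem~\ref{omega-modelstructure} that $\Phi$ lands in weakly projective model structures and that $\Psi\circ\Phi=\Id_\Omega$; you correctly establish heredity of $(\mathcal C,{\rm T}\mathcal F)$ by the two-out-of-three trick (the deflation $p$ with kernel in ${\rm T}\mathcal F$ is a trivial fibration by the ``equivalent form'' of weak projectivity, whence $Q\in\mathcal W$); and your identification of the five classes for $\Phi\circ\Psi=\Id_\Gamma$ is exactly right, using Theorem~\ref{fibrant} for $\Fib$ and $\TCoFib$, the defining equivalence of weak projectivity for $\CoFib$ and $\TFib$, and Lemma~\ref{morphism}(1) for $\Weq$.

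One small remark: when you invoke ``the standard reformulation of heredity'' you are using that for a \emph{complete} cotorsion pair in a weakly idempotent complete exact category, closure of the right class under cokernels of inflations is equivalent to heredity. This is indeed standard (see e.g.\ \cite{S}), but it is worth being explicit that completeness is needed here, since you have just established it in the preceding sentence.
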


\subsection{$\mathcal W$-model structure} For a class $\mathcal U$ of objects of category $\mathcal A$,
recall that a morphism $f$ of  $\mathcal A$ is said to be {\it $\Hom_{\mathcal{A}}(-, \mathcal{U})$-epic},
provided that $\Hom_{\mathcal{A}}(f, U)$ is epic for any $U\in \mathcal U$.

\vskip5pt

Let $\mathcal A$ be a weakly idempotent complete additive category, and $\mathcal W$ a full subcategory closed under isomorphisms. Put
\begin{equation}\begin{aligned}{\rm \CoFib}_{\mathcal W} & = \{\text{morphism} \ f \ | \ f \ \mbox{is} \ \Hom_{\mathcal A}(-, \mathcal W)\mbox{-epic}\}\\
\Fib_\mathcal W & = \{\text{morphism} \ f \ | \ f \ \mbox{is} \ \Hom_{\mathcal A}(\mathcal W, -)\mbox{-epic}\}\\
\TCoFib_\mathcal W & = \{\text{splitting monomorphism} \ f \ | \ \Coker f\in \mathcal W\}\\
\TFib_\mathcal W & =\{\text{splitting epimorphism} \ f \ | \ \Ker f\in \mathcal W\}\\
\Weq_\mathcal W & = \TFib_\mathcal W \circ \TCoFib_\mathcal W.\end{aligned} \end{equation}

\begin{thm}\label{Wms} {\rm([P, Proposition 2]; \cite[Theorem 4.5]{Bel})} \ Keep the notations in $(5.2)$. Then the followings are equivalent$:$

\vskip5pt

$(1)$ \ $({\rm CoFib}_{\mathcal{W}}, \ {\rm Fib}_{\mathcal{W}}, \ {\rm Weq}_{\mathcal{W}})$ is a model structure with
$$\TCoFib_\mathcal W = \CoFib_\mathcal W\cap \Weq_\mathcal W \ \ \mbox{and} \ \ \TFib_\mathcal W = \Fib_\mathcal W \cap \Weq_\mathcal W.$$

$(2)$ \ $\mathcal W$ is functorially finite in $\mathcal A$ and closed under finite coproducts and direct summands.

\vskip5pt

$(3)$ \ $({\rm CoFib}_{\mathcal W}, {\rm TFib}_{\mathcal W})$ is a fibrantly weak factorization system.

\vskip5pt

If this is the case, every object is cofibrant and fibrant, the class of trivial objects is $\mathcal W$, and ${\rm Ho}(\mathcal A)\cong \mathcal A/\mathcal W$ as additive categories.
\end{thm}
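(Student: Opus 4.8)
The plan is to prove the cycle $(2)\Longrightarrow(3)\Longrightarrow(1)\Longrightarrow(2)$, with the second half of Theorem \ref{mainthm} as the engine for $(3)\Longrightarrow(1)$. The preliminary step is a dictionary: feeding the pair $(\CoFib_{\mathcal W},\TFib_{\mathcal W})$ of $(5.2)$ into the construction $(1.1)$ recovers exactly the remaining classes of $(5.2)$. Indeed $0\longrightarrow X$ is always $\Hom_{\mathcal A}(-,\mathcal W)$-epic (the target $\Hom_{\mathcal A}(0,W)$ is zero), so the class $\mathcal C$ of $(1.1)$ is all of $\mathcal A$; the morphism $X\longrightarrow 0$ is always a splitting epimorphism with kernel $X$, so the class $\mathcal W$ of $(1.1)$ is the given subcategory $\mathcal W$; hence ${\rm T}\mathcal C=\mathcal C\cap\mathcal W=\mathcal W$, and then the $\Fib$, $\TCoFib$, $\Weq$ of $(1.1)$ are literally $\Fib_{\mathcal W}$, $\TCoFib_{\mathcal W}$, $\Weq_{\mathcal W}$.

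For $(2)\Longrightarrow(3)$ I would verify the three conditions of Definition \ref{FWFS} for $(\CoFib_{\mathcal W},\TFib_{\mathcal W})$. Condition $(1)$, that the pair is a weak factorization system, has three non-formal points. Closure of $\CoFib_{\mathcal W}$ under retracts is immediate (a retract of an epimorphism of abelian groups is epic); closure of $\TFib_{\mathcal W}$ under retracts uses that a retract of a splitting epimorphism is again a splitting epimorphism whose kernel is a direct summand of the original kernel (Definition-Fact \ref{defnfact}), together with $\mathcal W$ closed under summands. The lifting $l\ \square\ r$ for $l\in\CoFib_{\mathcal W}$, $r\in\TFib_{\mathcal W}$ is obtained by a construction in the spirit of Lemma \ref{lifting}: split $r$ as $0\longrightarrow\Ker r\longrightarrow C\longrightarrow D\longrightarrow 0$, lift the induced map $A\longrightarrow\Ker r$ along $l$ using that $\Hom_{\mathcal A}(l,\Ker r)$ is epic (as $\Ker r\in\mathcal W$), and assemble the lift from this and the section of $r$. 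The factorization of $f\colon A\longrightarrow B$ is $f=(1,0)\circ\binom{f}{g}$, where $g\colon A\longrightarrow W'$ is a left $\mathcal W$-approximation (covariant finiteness), $(1,0)\colon B\oplus W'\longrightarrow B$ lies in $\TFib_{\mathcal W}$, and $\binom{f}{g}$ lies in $\CoFib_{\mathcal W}$ because every map $A\longrightarrow W$ with $W\in\mathcal W$ factors through $g$. For condition $(2)$: if $f\colon A\longrightarrow B$ and $gf$ lie in $\TFib_{\mathcal W}$, then $g$ is a splitting epimorphism, and writing $A\cong B\oplus\Ker f$ gives $\Ker(gf)\cong\Ker g\oplus\Ker f$, so $\Ker g$ is a summand of $\Ker(gf)\in\mathcal W$ and $g\in\TFib_{\mathcal W}$. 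Condition $(3)$ is immediate since the relevant object classes are $\mathcal L=\mathcal A$ and $\mathcal R=\mathcal W$, so $\mathcal L\cap\mathcal R=\mathcal W$ is contravariantly finite by hypothesis.

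For $(3)\Longrightarrow(1)$, apply the second half of Theorem \ref{mainthm} to the fibrantly weak factorization system $(\CoFib_{\mathcal W},\TFib_{\mathcal W})$; by the dictionary its output is the model structure $(\CoFib_{\mathcal W},\Fib_{\mathcal W},\Weq_{\mathcal W})$, and Lemmas \ref{intersection1} and \ref{intersection2} give $\CoFib_{\mathcal W}\cap\Weq_{\mathcal W}=\TCoFib_{\mathcal W}$ and $\Fib_{\mathcal W}\cap\Weq_{\mathcal W}=\TFib_{\mathcal W}$, which is $(1)$. For $(1)\Longrightarrow(2)$: the model structure is fibrant, since $X\longrightarrow 0$ is trivially $\Hom_{\mathcal A}(\mathcal W,-)$-epic for every $X$, hence $\mathcal C=\mathcal F=\mathcal A$. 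Under $(1)$ one has $\TFib_{\mathcal W}=\Fib_{\mathcal W}\cap\Weq_{\mathcal W}\subseteq\Weq_{\mathcal W}$, so for $W\in\mathcal W$ the map $W\longrightarrow 0\in\TFib_{\mathcal W}$ is a weak equivalence, i.e.\ $W$ is trivial; conversely, if $X$ is trivial then $0\longrightarrow X\in\CoFib_{\mathcal W}\cap\Weq_{\mathcal W}=\TCoFib_{\mathcal W}$, forcing $X=\Coker(0\longrightarrow X)\in\mathcal W$. Thus $\mathcal W$ is exactly the class of trivial objects, which equals ${\rm T}\mathcal C$ and ${\rm T}\mathcal F$ here; by Lemma \ref{object} it is closed under finite coproducts and direct summands, contravariantly finite, and covariantly finite. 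The closing statement follows at once: every object is cofibrant and fibrant, $\mathcal W$ is the class of trivial objects, and $\Ho(\mathcal A)\cong\mathcal C/{\rm T}\mathcal C=\mathcal A/\mathcal W$ by Theorem \ref{hoA} (equivalently, by the last line of Theorem \ref{fibrant}).

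I expect the main obstacle to be the verification in $(2)\Longrightarrow(3)$ that $(\CoFib_{\mathcal W},\TFib_{\mathcal W})$ is a genuine weak factorization system: the retract-closure of $\TFib_{\mathcal W}$, the lifting property, and the factorization through a left $\mathcal W$-approximation are the only non-formal points, and it is exactly there that both halves of the functorial finiteness of $\mathcal W$ are consumed. By contrast, $(3)\Longrightarrow(1)$ is essentially a citation of Theorem \ref{mainthm} once the dictionary is in place, and $(1)\Longrightarrow(2)$ is short once one notices that $\TCoFib_{\mathcal W}=\CoFib_{\mathcal W}\cap\Weq_{\mathcal W}$ pins $\mathcal W$ down as the class of trivial objects.
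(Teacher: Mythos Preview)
Your proposal is correct and follows essentially the same route as the paper: the cycle $(2)\Rightarrow(3)\Rightarrow(1)\Rightarrow(2)$, with Theorem \ref{mainthm} powering $(3)\Rightarrow(1)$ and Lemma \ref{object} closing $(1)\Rightarrow(2)$. Your preliminary ``dictionary'' identifying the classes of $(1.1)$ with those of $(5.2)$ is a clean way to make the invocation of Theorem \ref{mainthm} transparent, and your lifting argument in $(2)\Rightarrow(3)$---splitting the $\TFib_{\mathcal W}$ side and extending the component $A\to\Ker r$ along $l$ using $\Hom_{\mathcal A}(l,\Ker r)$ epic---is the natural one here.
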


It is called {\it a $\mathcal W$-model structure},
where $\mathcal W$ refers to the class of trivial objects.
Theorem \ref{mainthm} gives a shorter proof of Theorem \ref{Wms}.
\vskip5pt

A model structure $(\CoFib, \Fib, \Weq)$ on pointed category $\mathcal A$
is {\it bifibrant} if $\mathcal C = \mathcal A = \mathcal F$. As shown by the following correspondence,
$\mathcal W$-model structures are precisely bifibrant model structures,
on a weakly idempotent complete additive category.

\begin{cor} \label{Wcorrespondence} {\rm ([P, Corollary 5], [Bel, Theorem 4.6])} \ Let $\A$ be a weakly idempotent complete additive category.
Then there is a one-to-one correspondence between functorially finite full subcategories closed under finite coproducts, direct summands and isomorphisms,
and bifibrant model structures on $\A$, given by
$\mathcal{W}\mapsto (\CoFib_{\mathcal{W}}, \Fib_{\mathcal{W}}, \Weq_{\mathcal{W}})$, with the inverse $(\CoFib, \Fib, \Weq)\mapsto \mathcal W$.
\end{cor}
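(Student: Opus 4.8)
The plan is to exhibit the two maps in the statement as mutually inverse. Write $\Phi:\mathcal W\mapsto(\CoFib_{\mathcal W},\Fib_{\mathcal W},\Weq_{\mathcal W})$ and $\Psi:(\CoFib,\Fib,\Weq)\mapsto\mathcal W$, the latter sending a model structure to its class of trivial objects. First I would check that $\Phi$ is well defined: if $\mathcal W$ is functorially finite and closed under finite coproducts, direct summands and isomorphisms, then by the implication $(2)\Rightarrow(1)$ of Theorem \ref{Wms} the triple $(\CoFib_{\mathcal W},\Fib_{\mathcal W},\Weq_{\mathcal W})$ is a model structure, and it is bifibrant since every object is both cofibrant and fibrant, as recorded in the last clause of Theorem \ref{Wms}. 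Next, for $\Psi$: given a bifibrant model structure, Lemma \ref{morphism}(3) shows that $\mathcal W$ is closed under isomorphisms, Lemma \ref{object}(5) shows it is closed under finite coproducts and direct summands, and since $\mathcal C=\mathcal A=\mathcal F$ we have ${\rm T}\mathcal C=\mathcal C\cap\mathcal W=\mathcal W=\mathcal F\cap\mathcal W={\rm T}\mathcal F$, which is contravariantly finite by Lemma \ref{object}(1) and covariantly finite by Lemma \ref{object}(2); hence $\mathcal W$ is functorially finite. So $\Psi$ lands in the prescribed class of subcategories.

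For $\Psi\circ\Phi=\mathrm{Id}$, the final clause of Theorem \ref{Wms} says precisely that the class of trivial objects of $(\CoFib_{\mathcal W},\Fib_{\mathcal W},\Weq_{\mathcal W})$ is $\mathcal W$, so there is nothing more to do. For $\Phi\circ\Psi=\mathrm{Id}$, start from a bifibrant model structure $(\CoFib,\Fib,\Weq)$ with class of trivial objects $\mathcal W$. Since it is fibrant and $\mathcal C=\mathcal A$, so that ${\rm T}\mathcal C=\mathcal W$, Theorem \ref{fibrant}(2),(3) identify $\TCoFib$ with $\{\text{splitting monomorphisms }f\mid\Coker f\in\mathcal W\}=\TCoFib_{\mathcal W}$ and $\Fib$ with the class of $\Hom_{\mathcal A}(\mathcal W,-)$-epic morphisms $=\Fib_{\mathcal W}$, matching the formulas in $(5.2)$. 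Dually, since the model structure is also ``cofibrant'' (every object is cofibrant) and $\mathcal F=\mathcal A$, so that ${\rm T}\mathcal F=\mathcal W$, the dual of Theorem \ref{fibrant} stated in Section 6 gives $\CoFib=\CoFib_{\mathcal W}$ and $\TFib=\TFib_{\mathcal W}$. Finally $\Weq=\TFib\circ\TCoFib=\TFib_{\mathcal W}\circ\TCoFib_{\mathcal W}=\Weq_{\mathcal W}$ by Lemma \ref{morphism}(1), so $\Phi(\mathcal W)=(\CoFib,\Fib,\Weq)$, as desired.

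The only genuinely new input is the well-definedness of $\Psi$ on the subcategory side, and here the key observation is that bifibrancy collapses the three ``trivial'' classes ${\rm T}\mathcal C$, ${\rm T}\mathcal F$ and $\mathcal W$ into a single one, so that the functorial finiteness of $\mathcal W$ comes for free from Lemma \ref{object}(1),(2); everything else is bookkeeping assembled from Theorems \ref{Wms} and \ref{fibrant} together with the dual of Theorem \ref{fibrant}. The one point to be careful about is to invoke the dual statements of Section 4 (rather than Theorem \ref{fibrant} itself) when matching $\CoFib$ and $\TFib$ against the formulas in $(5.2)$; alternatively one could route the whole argument through Theorem \ref{correspondence}, observing that bifibrant model structures correspond, under $\Psi$ of that theorem, to the fibrantly weak factorization systems of the form $(\CoFib_{\mathcal W},\TFib_{\mathcal W})$, but the direct verification above is cleaner.
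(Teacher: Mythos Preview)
Your proof is correct and is the natural unpacking of what the paper leaves implicit: the paper states Corollary \ref{Wcorrespondence} without proof, presenting it as an immediate consequence of Theorem \ref{Wms}. Your argument supplies exactly the missing half (that $\Phi\circ\Psi=\mathrm{Id}$, i.e., every bifibrant model structure is a $\mathcal W$-model structure) by combining Theorem \ref{fibrant} with its dual Theorem \ref{cofibrant}, which is the intended route given the paper's organization; the well-definedness of $\Psi$ via Lemma \ref{object} is also just what the paper would have one do.
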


After we got  Theorems \ref{Wms} and Corollary \ref{Wcorrespondence}, we found that they  have been  discovered by T. Pirashvili \cite{P} for abelian categories,
and by A. Beligiannis in \cite{Bel} for idempotent complete additive categories.
Recall that an additive category is {\it idempotent complete}, if each idempotent morphism has a kernel.
Clearly, an idempotent complete additive category is weakly idempotent complete, but the converse is not true. See [B\"u, Exercise 7.11].

\subsection{The relationships} \ Let $\A$ be an exact category.
An exact model structure is {\it projective} (respectively, {\it injective}), if each object is fibrant (respectively, cofibrant).
An exact model structure on $\mathcal A$ is projective (respectively, injective) if and only if
$\mathcal A$ has enough projective (respectively, injective) objects and
trivially cofibrant (respectively, trivially fibrant) objects coincide with projective (respectively, injective) objects. See [G2, 4.6].

\vskip5pt

An exact category is {\it Frobenius}, if it has enough projective objects and enough injective objects, and
the projective objects coincide with the injective objects.

\vskip5pt

An exact model structure on $\mathcal A$ is  {\it Frobenius}, if it is both projective and injective.
Thus, an exact model structure on $\mathcal A$ is Frobenius if and only if
$\mathcal A$ is a Frobenius category and  trivial objects coincide with the projective-injective objects of $\mathcal A$. See [G2, 4.7].

\begin{prop}\label{threeintersections} \ Let $\A$ be a weakly idempotent complete exact category.

\vskip5pt

$(1)$ \   An $\omega$-model structure on $\mathcal A$ is exact
if and only if $\mathcal A$ has enough projective objects and $\omega = \mathcal P$, the class of projective objects of $\mathcal A;$
and if and only if this $\omega$-model structure is projective.

\vskip5pt

If this is the case, then
\begin{align*}& \CoFib_\omega = \{\text{inflation} \ f \ | \ \Coker f\in \mathcal X\}, \ \ \ \ \ \   \Fib_\omega  = \{\text{deflation}\}\\
&\TCoFib_\omega  = \{\text{splitting monomorphism} \ f \ | \ \Coker f\in \mathcal P\}\\
&\TFib_\omega  =\{\text{deflation} \ f \ | \ \Ker f\in \mathcal Y\}, \ \ \ \ \ \ \ \ \ \ \ \Weq_\omega = \TFib_\omega \circ \TCoFib_\omega.\end{align*}

$(2)$ \ A $\mathcal W$-model structure on $\mathcal A$ is an $\omega$-model structure if and only if
$\mathcal A$ has enough injective objects and $\mathcal W =\mathcal I$, the class of injective objects of $\mathcal A$.

\vskip5pt

If this is the case, then $\omega = \mathcal W = \mathcal I$ is contravariantly finite in $\mathcal A$ and
\begin{align*}&\CoFib_\mathcal W  = \{\text{inflation}\}, \ \ \ \ \ \ \ \ \ \ \ \ \ \ \ \ \ \ \Fib_\mathcal W  = \{f \ | \ f \ \mbox{is} \ \Hom_{\mathcal A}(\mathcal I, -)\mbox{-epic}\}\\
&\TCoFib_\mathcal W  = \{\text{splitting monomorphism} \ f \ | \ \Coker f\in \mathcal I\}\\
&\TFib_\mathcal W  =\{\text{splitting epimorphism} \ f \ | \ \Ker f\in \mathcal I\}, \ \ \ \ \  \Weq_\mathcal W = \TFib_\omega \circ \TCoFib_\omega.\end{align*}

\vskip5pt

$(3)$ \  Let \ $(\CoFib, \Fib, \Weq)$  be a model structure on $\mathcal{A}$. Then the following are equivalent$:$

\vskip5pt

\hskip15pt ${\rm (i)}$  \ $(\CoFib, \Fib, \Weq)$  is both an exact model structure and a $\mathcal W$-model structure$;$

\vskip5pt

\hskip15pt ${\rm (ii)}$  \ $(\CoFib, \Fib, \Weq)$ is a Frobenius model structure$;$

\vskip5pt

\hskip15pt ${\rm (iii)}$    \ $(\CoFib, \Fib, \Weq)$  is simultaneously an exact model structure, an $\omega$-model structure, and a $\mathcal W$-model structure.

\vskip5pt

If this the case, then $\omega = \mathcal W = \mathcal P = \mathcal I$, and
$$\CoFib = \{\mbox{inflation}\},  \ \ \  \Fib = \{\mbox{deflation}\},  \ \ \ {\rm Weq} = \{ \ f \ | \ \bar f \ \mbox{is an isomorphism in} \ \mathcal{A}/ \mathcal P\}.$$
\end{prop}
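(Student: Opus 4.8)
The plan is to translate each of the notions \emph{exact}, \emph{$\omega$-model structure}, \emph{$\mathcal W$-model structure} and \emph{Frobenius model structure} into a statement about the classes $\mathcal C$, $\mathcal F$, $\mathcal W$ of (co)fibrant and trivial objects, and then to match these up using the cited facts [G2, 4.6], [G2, 4.7] together with Theorems \ref{omega-modelstructure} and \ref{Wms}. The only genuinely non-formal input is the following pair of observations, which use that $\mathcal A$ is weakly idempotent complete: if $\mathcal A$ has enough projectives, then a morphism is $\Hom_{\mathcal A}(\mathcal P,-)$-epic if and only if it is a deflation (projectives lift along deflations; conversely, a deflation from a projective factors through such a morphism, and a left factor of a deflation is again a deflation), and dually, if $\mathcal A$ has enough injectives, then a morphism is $\Hom_{\mathcal A}(-,\mathcal I)$-epic if and only if it is an inflation. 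Note also that, in both cases, enough projectives (injectives) makes $\mathcal P$ ($\mathcal I$) contravariantly (covariantly) finite.

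\textbf{Part (1).} Since the class of fibrant objects of any $\omega$-model structure is all of $\mathcal A$, such a model structure is exact exactly when it is a \emph{projective} exact model structure; by [G2, 4.6] this happens exactly when $\mathcal A$ has enough projectives and the trivially cofibrant objects, which here are $\mathcal C\cap\mathcal W=\mathcal X\cap\mathcal Y=\omega$, coincide with $\mathcal P$. For the remaining implication I would assume $\omega=\mathcal P$ with enough projectives and use the observation above to get $\Fib_\omega=\{\text{deflations}\}$; since $\CoFib_\omega=\{\text{inflations with cofibrant cokernel}\}$ by definition and every kernel is fibrant, the model structure is then exact. The displayed formulas are obtained by substituting $\omega=\mathcal P$ into $(5.1)$.

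\textbf{Part (2).} In one direction: if a $\mathcal W$-model structure coincides with an $\omega$-model structure for some $(\mathcal X,\mathcal Y)$, then comparing cofibrant objects forces $\mathcal X=\mathcal A$, hence $\omega=\mathcal Y$, and comparing trivial objects forces $\mathcal W=\mathcal Y=\omega$; since $(\mathcal A,\mathcal Y)$ is a complete cotorsion pair, $\mathcal Y=\mathcal A^{\perp}=\mathcal I$ and completeness gives enough injectives. Conversely, given enough injectives and $\mathcal W=\mathcal I$, I would take $\mathcal X=\mathcal A$, $\mathcal Y=\mathcal I$: this is a hereditary complete cotorsion pair with kernel $\omega=\mathcal I=\mathcal W$, which is contravariantly finite because $\mathcal W$ is functorially finite; by Theorem \ref{omega-modelstructure} the associated $\omega$-model structure is a model structure, and by the observation above its cofibrations are the inflations ($=\Hom_{\mathcal A}(-,\mathcal I)$-epic morphisms) and its fibrations the $\Hom_{\mathcal A}(\mathcal I,-)$-epic morphisms, i.e.\ $\CoFib_{\mathcal W}$ and $\Fib_{\mathcal W}$; since cofibrations and fibrations determine the model structure (Lemma \ref{morphism}(1)), the two coincide. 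The displayed formulas follow by substitution.

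\textbf{Part (3).} The implication (iii)$\Rightarrow$(i) is immediate. For (i)$\Rightarrow$(ii): a $\mathcal W$-model structure has $\mathcal C=\mathcal A=\mathcal F$, so an exact model structure with this property is both projective and injective, hence Frobenius by definition, and [G2, 4.6] then yields enough projectives and enough injectives with (trivially cofibrant $=\mathcal C\cap\mathcal W=\mathcal W$ and trivially fibrant $=\mathcal F\cap\mathcal W=\mathcal W$) $\mathcal P=\mathcal W=\mathcal I$, so that $\mathcal A$ is Frobenius with trivial objects the projective–injective objects. For (ii)$\Rightarrow$(iii): a Frobenius model structure is projective and injective, so again $\mathcal C=\mathcal A=\mathcal F$ and, by [G2, 4.7] and [G2, 4.6], $\mathcal A$ is Frobenius with $\mathcal P=\mathcal W=\mathcal I$; then $(\mathcal A,\mathcal I)$ is a hereditary complete cotorsion pair with contravariantly finite kernel $\mathcal I=\mathcal P$, and $\mathcal W=\mathcal I=\mathcal P$ is functorially finite and closed under finite coproducts and summands, so the $\omega$-model structure of Theorem \ref{omega-modelstructure} and the $\mathcal W$-model structure of Theorem \ref{Wms} both exist; as in parts (1) and (2) each has cofibrations $=\{\text{inflations}\}$ and fibrations $=\{\text{deflations}\}$, which also equal the cofibrations and fibrations of the given Frobenius exact model structure, so all three coincide by Lemma \ref{morphism}(1). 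Finally, in this case $\CoFib=\{\text{inflations}\}$ and $\Fib=\{\text{deflations}\}$ by exactness together with $\mathcal C=\mathcal F=\mathcal A$, and since every object lies in $\mathcal C\cap\mathcal F$, the characterization of weak equivalences obtained in the proof of Theorem \ref{hoA} ($f\in\Weq$ iff $\bar f$ is an isomorphism in $(\mathcal C\cap\mathcal F)/(\mathcal C\cap\mathcal F\cap\mathcal W)$) applies to all morphisms and reads $\Weq=\{f \mid \bar f \text{ is an isomorphism in } \mathcal A/\mathcal P\}$. I expect the main obstacle to be the bookkeeping in part (3): checking that the a priori distinct $\omega$-, $\mathcal W$- and exact model structures produced from the same data really share the same cofibrations and fibrations, and applying the cotorsion-pair/enough-(co)generators dictionary correctly — the one essential non-formal step being the weak-idempotent-completeness identification of $\Hom_{\mathcal A}(\mathcal P,-)$-epic morphisms with deflations, and its dual.
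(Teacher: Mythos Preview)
Your proposal is correct and follows essentially the same approach as the paper, resting on the same key non-formal step (the weak-idempotent-completeness identification of $\Hom_{\mathcal A}(\mathcal P,-)$-epic maps with deflations, and its dual). The only differences are organizational: in Part~(2), rather than building the $\omega$-model structure from $(\mathcal A,\mathcal I)$ and matching cofibrations/fibrations, the paper shows directly that $\CoFib_{\mathcal W}=\{\text{inflations}\}$ and that trivial fibrations are (splitting, hence) deflations, so the $\mathcal W$-model structure is weakly projective, and then invokes the Beligiannis--Reiten correspondence (Theorem~\ref{brcorrespondence}); in Part~(3)(ii)$\Rightarrow$(iii), rather than reconstructing all three model structures, the paper simply notes that a Frobenius model structure is bifibrant, hence a $\mathcal W$-model structure, and then applies Part~(2). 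Your route is slightly more explicit (and you supply the $\Weq$ description at the end, which the paper leaves implicit), while the paper's is a bit shorter by reusing earlier parts.
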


\begin{proof}  $(1)$ \ If $\omega$-model structure $(\CoFib_{\omega}, \Fib_{\omega}, \Weq_{\omega})$ is exact,
then by the Hovey correspondence $(\mathcal X, \ \A, \ \Y)$ is a Hovey triple,
thus $(\omega, \mathcal{A})$ is a complete cotorsion pair, and hence $\mathcal A$ has enough projective objects and $\omega = \mathcal P$.

\vskip5pt

Conversely, \ assume that $\mathcal A$ has enough projective objects and $\omega = \mathcal P$. For any $f\in \Fib_\omega$, by definition $f: A\longrightarrow B$ is $\Hom_{\mathcal A}(\omega, -)$-epic.
Taking a deflation $g: P\longrightarrow B$ with $P\in\mathcal P$, then
$g = fh$ for some $h: P\longrightarrow A$. Since $\mathcal{A}$ is weakly idempotent complete, $f$ is a deflation. Thus
$${\rm Fib}_{\omega} = \{ f \ | \ f \ \mbox{is} \ \Hom_{\mathcal A}(\omega, -)\mbox{-epic}\} = \{\text{deflation}\}.$$
Together with $\CoFib_\omega = \{\text{inflation} \ f \ | \ \Coker f\in \mathcal X\}$,  by definition $(\CoFib_{\omega}, \Fib_{\omega}, \Weq_{\omega})$ is exact.

\vskip5pt

$(2)$ \ If a $\mathcal W$-model structure is an $\omega$-model structure,
then $(\mathcal C, {\rm T}\mathcal F) = (\mathcal A, \mathcal W)$ is a complete cotorsion pair, it follows that
$\mathcal A$ has enough injective objects and $\mathcal W = \mathcal I$.

\vskip5pt

Conversely, assume that $\mathcal A$ has enough injective objects and $\mathcal W = \mathcal I$.
For any $f\in {\rm \CoFib}_{\mathcal W}$, by definition  $f: A\longrightarrow B$ is $\Hom_{\mathcal A}(-, \mathcal I)\mbox{-epic}$.
Taking an inflation $g: A\longrightarrow I$ with $I\in\mathcal I$, then
$g = hf$ for some $h: B\longrightarrow I$. Since $\mathcal{A}$ is weakly idempotent complete, $f$ is an inflation. Thus
$${\rm CoFib}_{\mathcal W} = \{ f \ | \ f \ \mbox{is} \ \Hom_{\mathcal A}(-, \mathcal I)\mbox{-epic}\} = \{\text{inflation}\}.$$
Since $\TCoFib_\mathcal W = \{\text{splitting monomorphism} \ f \ | \ \Coker f\in \mathcal I\}$, each trivial fibration is a deflation.
By definition, $(\CoFib_{\mathcal{W}}, \Fib_{\mathcal{W}}, \Weq_{\mathcal{W}})$ is a weakly projective model structure, and hence an $\omega$-model structure.

\vskip5pt

$(3)$ \ ${\rm (i)} \Longrightarrow {\rm (ii)}:$  \ Since $\mathcal W$-model structures are exactly bifibrant model structures, this implication follows from the definition
of a Frobenius model structure.

\vskip5pt

\hskip15pt ${\rm (ii)}\Longrightarrow {\rm (iii)}:$  \ If $(\CoFib, \Fib, \Weq)$ is a Frobenius model structure,
then it is a $\mathcal W$-model structure, $\mathcal A$ has enough injective objects and $\mathcal W =\mathcal I$.
By $(2)$, it is an $\omega$-model structure with $\omega = \mathcal W =\mathcal I.$

\vskip5pt

\hskip15pt ${\rm (iii)}\Longrightarrow {\rm (i)}:$   This is trivial.
\end{proof}

A model structure on pointed category $\mathcal A$ is {\it non-trivial}, if $\Ho(\mathcal A)\ne 0$.
We only interested in non-trivial model structures, and all examples of model structures below are non-trivial.

\begin{exms}\label{examp}

$(1)$ \ There are many examples of exact model structures which are also $\omega$-model structure. For example,
let $\mathcal A$ be an abelian category with enough projective objects, and ${\rm Ch}^-(\mathcal A)$ the category of right bounded complexes of objects of $\A$.
Then $(\CoFib, \Fib, \Weq)$ is an abelian model structure on ${\rm Ch}^-(\mathcal A)$,
where $$\CoFib = \{\mbox{monomorphism} \ f \ | \ \Coker f \ \mbox{is a complex of projective objects}\}$$
$$\Fib = \{\mbox{epimorphism} \}, \ \ \ \ \ \ \Weq = \{\mbox{quasi-isomorphism}\}.$$
See [Q1, page 1.2]. The class of cofibrant (fibrant, trivial, respectively) objects is given by
$$\mathcal C = \{\mbox{complex of projective objects}\}, \ \ \ \mathcal F = {\rm Ch}^-(\mathcal A), \ \ \  \mathcal W = \{\mbox{acyclic complex}\}.$$
By definition, it is a weakly projective model structure, and hence an $\omega$-model structure, by Theorem \ref{brcorrespondence}. But it is not a $\mathcal W$-model structure.

\vskip5pt

In fact, $(\mathcal C, \mathcal W)$ is a hereditary and complete cotorsion pair in ${\rm Ch}^-(\mathcal A)$, and
$\omega: = \mathcal C \cap \mathcal W$ is exactly the class of projective objects of ${\rm Ch}^-(\mathcal A)$. See [EJX] and [G1].

\vskip5pt

$(2)$ \ Let $A$ be an artin algebra, $A\mbox{-mod}$ the category of finitely generated $A$-modules, and
$\mathcal I$ the full subcategory of injective modules.
Then $\mathcal I$ is functorially finite in $A\mbox{-mod}$, and $(\CoFib, \Fib, \Weq)$ is both an $\omega$-model structure and a $\mathcal W$-model structure on $A\mbox{-mod}$,
where
\begin{align*}& {\rm \CoFib}  = \{\text{monomorphism}\}, \ \ \ \ \ \ \ \ \ \ \Fib  = \{\text{morphism} \ f \ | \ f \ \mbox{is} \ \Hom_{\mathcal A}(\mathcal I, -)\mbox{-epic}\}\\
& \TCoFib  = \{\text{splitting monomorphism} \ f \ | \ \Coker f\in \mathcal I\}\\
& \TFib =\{\text{splitting epimorphism} \ f \ | \ \Ker f\in \mathcal I\}, \ \ \ \ \ \ \Weq  = \TFib \circ \TCoFib.\end{align*}
Cofibrant objects, fibrant objects, and trivial objects are respectively given by
$$\mathcal C=\mathcal F=A\mbox{-mod}, \ \ \ \ \mathcal W=\mathcal I.$$

\vskip5pt

If $A$ is not self-injective, then $(\CoFib, \Fib, \Weq)$ is not an abelian model structure since $(\mathcal I, A\mbox{-mod})$ is not a cotorsion pair.

\vskip5pt

If $A$ is self-injective, then $(\CoFib, \Fib, \Weq)$ is a Frobenius model structure, i.e.,
it is simultaneously an $\omega$-model structure, a $\mathcal W$-model structure, and an abelian model structure.

\vskip5pt

$(3)$ \ Let $R$ be a ring, and ${\rm Ch}(R)$ the complex category of $R$-modules.
A complex $I$ is {\it {\rm dg}-injective}, if each $I^n$ is an injective $R$-module and ${\rm Hom}^\bullet(E, I)$ is acyclic for any acyclic complex $E$ ([AF]).
Denote by ${\rm dg}\mathcal I$ the class of {\rm dg}-injective complexes.
Then $(\CoFib, \Fib, \Weq)$ is an abelian model structure on ${\rm Ch}(R)$,
where $$\CoFib = \{\mbox{monomorphism}\}, \ \ \Fib = \{\mbox{epimorphism} \ f \ | \ \Ker f\in {\rm dg}\mathcal I\}, \ \ \Weq = \{\mbox{quasi-isomorphism}\}.$$
See \cite[Theorem 2.3.13]{H1}. Cofibrant objects, fibrant objects, and trivial objects, are respectively given by
$$\mathcal C = {\rm Ch}(R), \ \ \ \mathcal F = {\rm dg}\mathcal I, \ \ \ \mathcal W = \{\mbox{acyclic complex} \}.$$
Thus, it is not a fibrant model structure, and hence it is neither an $\omega$-model structure nor a $\mathcal W$-model structure.

\vskip5pt

$(4)$ \ Let $A$ be an artin algebra.
For a module $M$, let ${\rm add} M$ be the set of modules which are summands of finite direct sums of copies of $M$,
and $\widetilde{{\rm add} M}$ the set of modules $X$ such that there is an exact sequence
$$0\longrightarrow X\longrightarrow M^0\longrightarrow \ldots\longrightarrow M^s\longrightarrow 0$$
with each $M^i\in {\rm add} M$.

\vskip5pt

Let $T$ be a tilting module, i.e., ${\rm proj.dim.} T <\infty$, \ $\Ext_A^i(T, T)=0$ for $i\ge 1$, and  $A\in \widetilde{{\rm add}T}$.
By \cite[VIII, 4.13]{BR},  $(\widetilde{{\rm add} T}, \ T^{\perp_{\ge 1}})$ is a hereditary complete cotorsion pair in $A\mbox{-mod}$ with $\omega:=\widetilde{{\rm add} T}\cap T^{\perp_{\ge 1}}={\rm add} T$ contravariantly finite in $A\mbox{-mod}$, where $T^{\perp_{\ge 1}}=\{M \in A\mbox{-mod} \ | \ \Ext^i_A(T, M) = 0, \ \forall \ i\ge 1\}$.
Then $(\CoFib, \Fib, \Weq)$ is an $\omega$-model structure on $A\mbox{-mod}$, where
\begin{align*}& {\rm \CoFib}  = \{\text{inflation} \ f \ | \ \Coker f\in \widetilde{{\rm add} T}\}, \ \ \ \ \ \ \ \Fib  = \{f \ | \ f \ \mbox{is} \ \Hom_{\mathcal A}({\rm add} T, -)\mbox{-epic}\}\\
& \TCoFib  = \{\text{splitting monomorphism} \ f \ | \ \Coker f\in {\rm add} T\}\\
& \TFib =\{\text{deflation} \ f \ | \ \Ker f\in T^{\perp_{\ge 1}}\}, \ \ \ \ \ \ \ \ \ \  \Weq  = \TFib \circ \TCoFib.\end{align*}
Cofibrant objects, fibrant objects, and trivial objects are respectively given by
$$\mathcal C=\widetilde{{\rm add} T},\ \ \ \ \mathcal F=A\mbox{-mod}, \ \ \ \ \mathcal W=T^{\perp_{\ge 1}}.$$

Now, assume that $T$ is not projective. Since $({\rm add} T, A\mbox{-mod})$ is not a cotorsion pair in $A\mbox{-mod}$,
$(\CoFib, \Fib, \Weq)$ is not an exact model structure.

\vskip5pt

Assume in addition that $A$ is of infinite global dimension. Then $\widetilde{{\rm add} T}\ne A\mbox{-mod}$, and hence $(\CoFib, \Fib, \Weq)$ is not a $\mathcal W$-model structure.

\vskip5pt

$(5)$ \ Let $A$ be an artin algebra, and $M$ an $A$-module.
Then ${\rm add} M$ is functorially finite in $A\mbox{-mod}$.
By Theorem \ref{Wms}, $(\CoFib, \Fib, \Weq)$ is a $\mathcal W$-model structure on $A\mbox{-mod}$,
where
\begin{align*}& \CoFib  = \{f \ | \ f \ \mbox{is} \ \Hom_{\mathcal A}(-,{\rm add} M)\mbox{-epic}\}, \ \ \ \ \ \ \ \ {\rm \Fib}  = \{ f \ | \ f \ \mbox{is} \ \Hom_{\mathcal A}({\rm add} M, -)\mbox{-epic}\}\\
& \TCoFib  = \{\text{splitting monomorphism} \ f \ | \ \Coker f\in {\rm add} M\}\\
& \TFib =\{\text{splitting epimorphism} \ f \ | \ \Ker f\in {\rm add} M\}, \ \ \ \ \ \ \Weq  = \TFib \circ \TCoFib.\end{align*}
Cofibrant objects, fibrant objects, and trivial objects are respectively given by
$$\mathcal C=\mathcal F=A\mbox{-mod}, \ \ \ \ \mathcal W={\rm add} M.$$

Now, choose $M$ to be a non injective module.
Since $(A\mbox{-mod}, {\rm add} M)$ is not a cotorsion pair in $A\mbox{-mod}$,  it follows that $(\CoFib, \Fib, \Weq)$ is neither an abelian model structure nor an $\omega$-model structure on $A\mbox{-mod}$.
\end{exms}

\section{\bf The dual version: cofibrant model structures}
We state the following dual version without proofs. A model structure on a pointed category is called {\it cofibrant},
if every object is a cofibrant object.

\begin{thm}\label{cofibrant} \ Let  $(\CoFib, \Fib, \Weq)$ be a model structure on weakly idempotent complete additive category $\mathcal A$.
Then the followings are equivalent.

  \vskip5pt

  $(1)$ \ $(\CoFib, \Fib, \Weq)$ is a cofibrant model structure.

  \vskip5pt

  $(2)$ \ $\TFib   =\{\text{splitting epimorphism} \ f \ |\ \Ker f\in {\rm T}\mathcal F \}$.

\vskip5pt

  $(3)$ \ $\CoFib  = \{ \text{morphism} \ f \ | \ f \ \mbox{is} \ \Hom_{\mathcal A}(-, {\rm T}\mathcal F)\mbox{-epic}\}$.

\vskip5pt

If this is the case, then $\Ho(\mathcal A)\cong \mathcal F/{\rm T}\mathcal F$, as additive categories. \end{thm}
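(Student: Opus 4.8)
The plan is to deduce everything from Theorem \ref{fibrant} by passing to the opposite category $\mathcal A^{\rm op}$. First I would note that $\mathcal A^{\rm op}$ is again a weakly idempotent complete additive category: a morphism is a splitting monomorphism in $\mathcal A^{\rm op}$ exactly when it is a splitting epimorphism in $\mathcal A$, and its cokernel in $\mathcal A^{\rm op}$ is its kernel in $\mathcal A$, so the two equivalent defining forms of weak idempotent completeness are simply interchanged by $(-)^{\rm op}$. Next, $(\CoFib, \Fib, \Weq)$ is a model structure on $\mathcal A$ if and only if $(\Fib^{\rm op}, \CoFib^{\rm op}, \Weq^{\rm op})$ is a model structure on $\mathcal A^{\rm op}$: the two-out-of-three and retract axioms of Definition \ref{ms} are self-dual, while in the lifting and factorization axioms the left and right lifting properties — and the roles of $\CoFib$ and $\Fib$ — are exchanged. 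For the resulting model structure on $\mathcal A^{\rm op}$ I would record the dictionary: its cofibrant objects are the fibrant objects $\mathcal F$ of $\mathcal A$, its fibrant objects are $\mathcal C$, its trivial objects are $\mathcal W$; hence its trivially cofibrant objects form ${\rm T}\mathcal F$ and its trivially fibrant objects form ${\rm T}\mathcal C$; and $\TCoFib(\mathcal A^{\rm op}) = \TFib(\mathcal A)^{\rm op}$, $\Fib(\mathcal A^{\rm op}) = \CoFib(\mathcal A)^{\rm op}$.

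With this dictionary the three conditions of Theorem \ref{cofibrant} become precisely the three conditions of Theorem \ref{fibrant} for $\mathcal A^{\rm op}$. Condition (1) — every object of $\mathcal A$ is cofibrant — is condition (1) there — every object of $\mathcal A^{\rm op}$ is fibrant. Condition (2), $\TFib(\mathcal A) = \{\text{splitting epimorphism } f \mid \Ker f \in {\rm T}\mathcal F\}$, becomes, after applying $(-)^{\rm op}$ and using $\Ker_{\mathcal A} f = \Coker_{\mathcal A^{\rm op}} f^{\rm op}$, exactly $\TCoFib(\mathcal A^{\rm op}) = \{\text{splitting monomorphism } g \mid \Coker g \in {\rm T}\mathcal C(\mathcal A^{\rm op})\}$, which is condition (2) of Theorem \ref{fibrant}. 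Likewise a morphism $f$ of $\mathcal A$ is $\Hom_{\mathcal A}(-, {\rm T}\mathcal F)$-epic if and only if $f^{\rm op}$ is $\Hom_{\mathcal A^{\rm op}}({\rm T}\mathcal C(\mathcal A^{\rm op}), -)$-epic, since $\Hom_{\mathcal A}(f, U) = \Hom_{\mathcal A^{\rm op}}(U^{\rm op}, f^{\rm op})$; so condition (3) here is condition (3) of Theorem \ref{fibrant}. Hence the equivalences (1)$\Leftrightarrow$(2)$\Leftrightarrow$(3) follow by quoting Theorem \ref{fibrant} verbatim for $\mathcal A^{\rm op}$.

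For the last assertion I would use that localization commutes with passage to the opposite, so $\Ho(\mathcal A)^{\rm op} \cong \mathcal A^{\rm op}[(\Weq^{\rm op})^{-1}] = \Ho(\mathcal A^{\rm op})$; the final statement of Theorem \ref{fibrant} applied to $\mathcal A^{\rm op}$ then gives $\Ho(\mathcal A^{\rm op}) \cong \mathcal C(\mathcal A^{\rm op})/{\rm T}\mathcal C(\mathcal A^{\rm op}) = \mathcal F^{\rm op}/({\rm T}\mathcal F)^{\rm op} = (\mathcal F/{\rm T}\mathcal F)^{\rm op}$ as additive categories, where I use that the ideal quotient of an additive category by a full additive subcategory commutes with $(-)^{\rm op}$, the condition ``factors through an object of the subcategory'' being self-dual. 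Taking opposites once more yields $\Ho(\mathcal A) \cong \mathcal F/{\rm T}\mathcal F$.

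I do not expect a genuine obstacle here; the only real work is the bookkeeping needed to invoke Theorem \ref{fibrant} literally — verifying that each model-structure axiom dualizes, that $\mathcal A^{\rm op}$ remains a weakly idempotent complete additive category, and that the $\Hom_{\mathcal A}(-, {\rm T}\mathcal F)$-epic condition and the splitting-epimorphism-with-kernel-in-${\rm T}\mathcal F$ description translate under $(-)^{\rm op}$ to the $\Hom_{\mathcal A^{\rm op}}({\rm T}\mathcal C, -)$-epic condition and the splitting-monomorphism-with-cokernel-in-${\rm T}\mathcal C$ description. None of these is deep, but all must be checked so that no step of the original proof has to be re-run by hand.
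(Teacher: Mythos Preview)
Your proposal is correct and matches the paper's approach: the paper presents Theorem \ref{cofibrant} explicitly as the dual version of Theorem \ref{fibrant} and states it without proof, which is exactly what your opposite-category argument formalizes. The bookkeeping you outline (dualizing the model-structure axioms, weak idempotent completeness, and the $\Hom$-epic/splitting conditions) is precisely what is needed to make the duality rigorous.
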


To construct cofibrant model structure, we need

\begin{defn}\label{CWFS} \ Let $\mathcal A$ be a pointed category.
A pair $({\rm L}, {\rm R})$ of classes of morphisms is a cofibrantly weak factorization system on $\mathcal A$, provided that the following conditions are satisfied:

\vskip5pt

$(1)$ \ $({\rm L}, {\rm R})$ is a weak factorization system.

\vskip5pt

$(2)$ \ For $f: A\longrightarrow B$ and $g:B\longrightarrow C$, if $g, gf\in {\rm L}$, then $f\in {\rm L}$.

\vskip5pt

$(3)$ \ The class $\mathcal L\cap \mathcal R$ of objects is covariantly finite in $\mathcal A$,
where $$\mathcal L =\{X \ | \ 0\longrightarrow X \ \mbox{is in} \ {\rm L}\}, \ \ \ \ \mathcal R =\{X \ | \ X\longrightarrow 0 \ \mbox{is in} \ {\rm R}\}.$$
\end{defn}

\begin{prop}\label{cofibrantFWFS} \ Let $(\CoFib, \Fib, \Weq)$ be a cofibrant model structure. Then both $(\TCoFib, \Fib)$ and $(\CoFib, \TFib)$ are cofibrantly weak factorization systems.
\end{prop}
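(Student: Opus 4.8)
The plan is to dualize the proof of Proposition \ref{fibrantFWFS}, checking the three conditions of Definition \ref{CWFS} for each of the pairs $(\TCoFib,\Fib)$ and $(\CoFib,\TFib)$. Condition $(1)$ requires no new work: for any model structure both $(\TCoFib,\Fib)$ and $(\CoFib,\TFib)$ are weak factorization systems, as recorded just before Lemma \ref{lemFWFS}. The substance lies in conditions $(2)$ and $(3)$, and the main input is the dual of Theorem \ref{fibrant}, i.e. Theorem \ref{cofibrant}, which for a cofibrant model structure gives $\CoFib=\{f\mid f\ \mbox{is}\ \Hom_{\mathcal A}(-,{\rm T}\mathcal F)\mbox{-epic}\}$ and hence, via $\TCoFib=\CoFib\cap\Weq$, identifies $\TCoFib$ with the class of $\Hom_{\mathcal A}(-,{\rm T}\mathcal F)$-epic weak equivalences.

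For condition $(2)$ I would first note the elementary fact that if $\Hom_{\mathcal A}(gf,U)$ is epic then so is $\Hom_{\mathcal A}(f,U)$, since $\Hom_{\mathcal A}(gf,U)=\Hom_{\mathcal A}(f,U)\circ\Hom_{\mathcal A}(g,U)$ and a surjective composite has surjective last factor; thus whenever $gf$ is $\Hom_{\mathcal A}(-,{\rm T}\mathcal F)$-epic, so is $f$. For $(\CoFib,\TFib)$ this already yields what is needed: if $g,gf\in\CoFib$ then $f\in\CoFib$. For $(\TCoFib,\Fib)$: if $g,gf\in\TCoFib$ then $g,gf\in\Weq$, so $f\in\Weq$ by the Two-out-of-three axiom, while $gf\in\CoFib$ forces $f\in\CoFib$ by the observation above; hence $f\in\CoFib\cap\Weq=\TCoFib$. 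This is exactly the dual of the computation in the proof of Proposition \ref{fibrantFWFS}, with $\Hom_{\mathcal A}({\rm T}\mathcal C,-)$-epic replaced by $\Hom_{\mathcal A}(-,{\rm T}\mathcal F)$-epic.

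For condition $(3)$ I would compute $\mathcal L\cap\mathcal R$ in each case. For $(\TCoFib,\Fib)$ one has $\mathcal L=\{X\mid 0\longrightarrow X\ \mbox{is in}\ \TCoFib\}={\rm T}\mathcal C$ and $\mathcal R=\{X\mid X\longrightarrow 0\ \mbox{is in}\ \Fib\}=\mathcal F$, so $\mathcal L\cap\mathcal R={\rm T}\mathcal C\cap\mathcal F$; since the model structure is cofibrant, $\mathcal C=\mathcal A$, whence ${\rm T}\mathcal C\cap\mathcal F=\mathcal C\cap\mathcal W\cap\mathcal F=\mathcal W\cap\mathcal F={\rm T}\mathcal F$. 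For $(\CoFib,\TFib)$ one has $\mathcal L=\mathcal C=\mathcal A$ and $\mathcal R={\rm T}\mathcal F$, so again $\mathcal L\cap\mathcal R={\rm T}\mathcal F$. In both cases the required covariant finiteness of ${\rm T}\mathcal F$ in $\mathcal A$ is precisely Lemma \ref{object}(2). I do not expect a genuine obstacle; the only point that deserves care is that condition $(2)$ truly relies on the cofibrant hypothesis through Theorem \ref{cofibrant}, since in an arbitrary model structure $\CoFib$ need not enjoy this left-cancellation property.
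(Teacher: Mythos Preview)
Your proposal is correct; the paper states Proposition \ref{cofibrantFWFS} without proof (Section 6 is presented as ``the dual version without proofs''), and your argument is precisely the expected dualization of the proof of Proposition \ref{fibrantFWFS}. In fact you go slightly further: you verify condition $(2)$ and compute $\mathcal L\cap\mathcal R$ for \emph{both} pairs explicitly, whereas the paper's proof of Proposition \ref{fibrantFWFS} only spells out the check for $(\CoFib,\TFib)$ and leaves $(\TCoFib,\Fib)$ to the reader.
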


\vskip5pt

Assume that $(\TCoFib, \Fib)$ is a cofibrantly weak factorization system on $\mathcal A$.  Put
\begin{equation}\begin{aligned}
\mathcal F & = \{ \mbox{object} \ X \ | \ \mbox{morphism} \ X\longrightarrow 0 \ \mbox{is in} \ \Fib\} \\
\mathcal W & = \{ \mbox{object} \ X \ | \ \mbox{morphism}  \ 0\longrightarrow X \ \mbox{is in} \ \TCoFib\} \\
{\rm T}\mathcal F & = \mathcal F\cap \mathcal W\\
\CoFib & = \{ \text{morphism} \ f \ | \ f \ \mbox{is} \ \Hom_{\mathcal A}(-, {\rm T}\mathcal F)\mbox{-epic}\}
\\
\TFib  & =\{\text{splitting epimorphism} \ f \ |\ \Ker f\in {\rm T}\mathcal F \}\\
\Weq & = \TFib\circ\TCoFib.\end{aligned}\end{equation}

\begin{thm}\label{dmainthm} \ Let $\mathcal{A}$ be a weakly idempotent complete additive category.
If  $(\CoFib, \Fib, \Weq)$ is a cofibrant model structure on $\mathcal A$, then
$(\TCoFib, \Fib)$ is a cofibrantly weak factorization system on $\mathcal A$, and
$\CoFib$ and $\TFib$ are given as in $(6.1)$.

\vskip5pt

Conversely, if $(\TCoFib, \Fib)$ is a cofibrantly weak factorization system on $\mathcal A$, then $(\CoFib, \Fib,$ $\Weq)$ is a cofibrant model structure on $\mathcal A$,
where $\CoFib$ and $\Weq$ are given as in $(6.1)$.
\end{thm}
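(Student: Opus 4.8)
The plan is to deduce Theorem~\ref{dmainthm} from Theorem~\ref{mainthm} by passing to the opposite category $\A^{\mathrm{op}}$. First I would note that $\A^{\mathrm{op}}$ is again a weakly idempotent complete additive category: a splitting monomorphism in $\A$ is precisely a splitting epimorphism in $\A^{\mathrm{op}}$, so the requirement ``every splitting monomorphism in $\A^{\mathrm{op}}$ has a cokernel'' reads as ``every splitting epimorphism in $\A$ has a kernel'', which by definition is equivalent to $\A$ being weakly idempotent complete. A model structure $(\CoFib,\Fib,\Weq)$ on $\A$ yields one on $\A^{\mathrm{op}}$ with the roles of cofibrations and fibrations (hence of trivial cofibrations and trivial fibrations, and of cofibrant and fibrant objects) interchanged; in particular it is cofibrant on $\A$ if and only if it is fibrant on $\A^{\mathrm{op}}$. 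Under this passage, cokernels and kernels swap, splitting monomorphisms and splitting epimorphisms swap, ``covariantly finite'' becomes ``contravariantly finite'', and ``$f$ is $\Hom_{\A}(-,\mathrm T\mathcal F)$-epic'' becomes ``$f^{\mathrm{op}}$ is $\Hom_{\A^{\mathrm{op}}}(\mathrm T\mathcal F,-)$-epic'' (these are literally the same condition on the relevant $\Hom$-groups).

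The step demanding the most care is matching the two notions of weak factorization system. If $(\TCoFib,\Fib)$ is a pair of classes of morphisms on $\A$, then on $\A^{\mathrm{op}}$ it becomes the pair $(\CoFib,\TFib)$, with left and right classes interchanged because the left lifting property turns into the right lifting property. I would check that condition~(2) of Definition~\ref{CWFS}, namely ``$g,\,gf\in\TCoFib\Rightarrow f\in\TCoFib$'', is exactly condition~(2) of Definition~\ref{FWFS} read in $\A^{\mathrm{op}}$; and that the object class $\mathcal L\cap\mathcal R$ of Definition~\ref{CWFS} for the pair $(\TCoFib,\Fib)$ is $\mathcal W\cap\mathcal F=\mathrm T\mathcal F$, whose covariant finiteness in $\A$ is the contravariant finiteness in $\A^{\mathrm{op}}$ demanded by Definition~\ref{FWFS}(3). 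Hence $(\TCoFib,\Fib)$ is a cofibrantly weak factorization system on $\A$ if and only if $(\CoFib,\TFib)$ is a fibrantly weak factorization system on $\A^{\mathrm{op}}$. Finally one verifies that the defining formulas~(6.1) on $\A$ become the defining formulas~(1.1) on $\A^{\mathrm{op}}$, term for term. Granting this dictionary, the two assertions of Theorem~\ref{dmainthm} are precisely the two assertions of Theorem~\ref{mainthm} applied to $\A^{\mathrm{op}}$.

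An internal alternative, which avoids explicit mention of $\A^{\mathrm{op}}$, is to dualize Section~4 line by line: first record the duals of Lemmas~\ref{summand}--\ref{third2outof3} (for example, the dual of Lemma~\ref{Weq} says that if $f\colon A\to B$ lies in $\Weq$, then for every left $\mathrm T\mathcal F$-approximation $t\colon A\to U$ the morphism $\binom{f}{t}\colon A\to B\oplus U$ lies in $\TCoFib$), and then verify the Retract, Lifting, Factorization and Two-out-of-three axioms for $(\CoFib,\Fib,\Weq)$ exactly as in the proof of Theorem~\ref{mainthm}, reversing all arrows and replacing $\mathrm T\mathcal C$-approximations by $\mathrm T\mathcal F$-approximations; cofibrancy of the resulting model structure and the identification of $\CoFib$ and $\TFib$ with~(6.1) then follow from Theorem~\ref{cofibrant} and Proposition~\ref{cofibrantFWFS}. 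Either way there is no genuine mathematical obstacle; the only delicate point is the bookkeeping above, and in particular the observation that the asymmetric condition~(2) in Definitions~\ref{FWFS} and~\ref{CWFS} are honest duals of one another.
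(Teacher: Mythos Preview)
Your proposal is correct and matches the paper's own treatment: Section~6 opens with ``We state the following dual version without proofs'', so Theorem~\ref{dmainthm} is asserted as the formal dual of Theorem~\ref{mainthm} with no argument given. Your explicit passage to $\A^{\mathrm{op}}$ (or the line-by-line dualization of Section~4) is precisely the justification the paper leaves implicit.
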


\begin{thm} \label{dcorrespondence} \ Let $\A$ be a weakly idempotent complete additive category, $\Omega$ the class of cofibrantly weak factorization systems on $\A$,
and $\Gamma$ the class of cofibrant model structures on $\A$. Then the map
$$\Phi: \Omega \longrightarrow \Gamma \ \ \ \mbox{given by} \ \ \ (\TCoFib, \Fib)\mapsto (\CoFib, \Fib, \Weq)$$
where $\CoFib$ and $\Weq$ are given as in $(6.1)$,  and the map $$\Psi: \Gamma\longrightarrow \Omega  \ \ \ \mbox{given by} \ \ \
(\CoFib, \Fib, \Weq)\mapsto (\TCoFib, \Fib)$$
give a one-one correspondence between $\Omega$ and $\Gamma$.
\end{thm}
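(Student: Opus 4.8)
The plan is to run exactly the argument used for Theorem \ref{correspondence}, with every cited result replaced by its dual. Theorem \ref{dmainthm} already does the heavy lifting: it guarantees ${\rm Im}\,\Phi\subseteq\Gamma$ and ${\rm Im}\,\Psi\subseteq\Omega$, so that $\Phi$ and $\Psi$ are well-defined maps between $\Omega$ and $\Gamma$. Concretely, a cofibrantly weak factorization system $(\TCoFib,\Fib)$ is sent by $\Phi$ to the cofibrant model structure $(\CoFib,\Fib,\Weq)$ of $(6.1)$, while a cofibrant model structure is sent by $\Psi$ to the pair $(\TCoFib,\Fib)$ of its trivial cofibrations and fibrations, which is again a cofibrantly weak factorization system. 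It then remains to check that $\Psi\circ\Phi={\rm Id}_\Omega$ and $\Phi\circ\Psi={\rm Id}_\Gamma$.

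For $\Psi\circ\Phi$, start with a cofibrantly weak factorization system $(\TCoFib,\Fib)$. Applying $\Phi$ produces the cofibrant model structure $(\CoFib,\Fib,\Weq)$ of $(6.1)$, and applying $\Psi$ to it returns the class $\CoFib\cap\Weq$ of its trivial cofibrations together with $\Fib$. The identity $\CoFib\cap\Weq=\TCoFib$ in the situation of $(6.1)$ is the dual of Lemma \ref{intersection2}, so this pair is precisely $(\TCoFib,\Fib)$ and hence $\Psi\circ\Phi={\rm Id}_\Omega$.

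For $\Phi\circ\Psi$, start with an arbitrary cofibrant model structure $(\CoFib,\Fib,\Weq)$. Theorem \ref{cofibrant} supplies the explicit descriptions
$$\TFib=\{\text{splitting epimorphism }f\mid\Ker f\in{\rm T}\mathcal F\},\qquad \CoFib=\{f\mid f\text{ is }\Hom_{\mathcal A}(-,{\rm T}\mathcal F)\text{-epic}\}.$$
Applying $\Psi$ gives $(\TCoFib,\Fib)$ with $\TCoFib=\CoFib\cap\Weq$. The one small point to verify is that the class ${\rm T}\mathcal F$ recomputed inside $(6.1)$ out of this pair agrees with the original trivially fibrant objects: since the model structure is cofibrant, every object is cofibrant, hence $\{X\mid 0\longrightarrow X\in\TCoFib\}$ is exactly the class $\mathcal W$ of trivial objects, and intersecting with $\mathcal F=\{X\mid X\longrightarrow 0\in\Fib\}$ recovers ${\rm T}\mathcal F=\mathcal F\cap\mathcal W$. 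Consequently $\Phi(\TCoFib,\Fib)=(\CoFib',\Fib,\TFib'\circ\TCoFib)$ where, by the two displayed descriptions, $\CoFib'=\CoFib$ and $\TFib'=\TFib$; and $\TFib\circ\TCoFib=\Weq$ by Lemma \ref{morphism}(1). Hence $\Phi\circ\Psi={\rm Id}_\Gamma$, and $\Phi,\Psi$ are mutually inverse bijections, proving the theorem.

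The main obstacle is not in this bookkeeping but in the inputs Theorems \ref{dmainthm} and \ref{cofibrant} — that the classes of $(6.1)$ really satisfy the model-structure axioms and that Theorem \ref{cofibrant}'s descriptions hold. These are the duals of the work carried out in Section 4, and once they are granted the present argument is the verbatim dual of the proof of Theorem \ref{correspondence}.
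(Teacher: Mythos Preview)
Your proposal is correct and is precisely the dualization of the paper's proof of Theorem \ref{correspondence}; since Section 6 explicitly states the dual results without proofs, this is exactly the argument the paper intends. The only cosmetic difference is that you spell out why the recomputed class ${\rm T}\mathcal F$ agrees with the original (using that every object is cofibrant), whereas the paper's proof of Theorem \ref{correspondence} leaves the analogous check for ${\rm T}\mathcal C$ implicit under the phrase ``by construction''.
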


\vskip10pt


\begin{thebibliography}{99}	

\bibitem[AHRT]{AHRT} J. Ad\'amek,  H. Herrlich, J.  Rosick\'y, W. Tholen, Weak factorization systems and topological functors, Appl. Categ. Structures 10(3)(2002), 237-249.
\bibitem[AF]{AF} L. L. Avramov, H-B. Foxby, Homological dimensions of unbounded complexes, J. Pure Appl. Algebra 71(2-3)(1991), 129-155.
\bibitem[Bec]{Bec} H. Becker, Models for singularity categories, Adv. Math. 254(2014), 187-232.
\bibitem[Bel]{Bel} A. Beligiannis, Homotopy theory of modules and Gorenstein rings, Math. Scand. 89 (2001), 5-45.
\bibitem[BR]{BR}  A. Beligiannis, I. Reiten,  Homological and homotopical aspects of torsion theories,  Mem. Amer. Math. Soc. 188, 883(2007).
\bibitem[B\"u]{Bu} T. B\"uhler, Exact categories, Expositions Math. 28(2010), 1-69.
\bibitem[CLZ]{CLZ} J. Cui, X. S. Lu, P. Zhang, Model structure from one hereditary complete cotorsion pair, arXiv: 2401.08078v1.
\bibitem[EJX]{EJX} E. Enochs, M. G. Jenda, J. Xu,  Orthogonality in the category of complexes, Math. J. Okayama Univ. 38(1996), 25-46.
\bibitem[GZ]{GZ} P. Gabriel and M. Zisman, Calculus of fractions and homotopy theory, Ergebnisse der Mathematik und ihrer Grenzgebiete, Band 35, Springer-Verlag New York, Inc., New York, 1967.
\bibitem[G1]{G1} J. Gillespie, The flat model structure on ${\rm Ch}(R)$, Trans. Amer. Math. Soc. 356(8)(2004), 3369-3390.
\bibitem[G2]{G2} J. Gillespie, Model structures on exact categories, J. Pure Appl. Algebra 215(12)(2011), 2892-2902.
\bibitem[G3]{G3} J. Gillespie, How to construct a Hovey triple from two cotorsion pairs, Fund. Math. 230(3)(2015), 281-289.
\bibitem[H1]{H1} \ M. Hovey, Model categories,  Math. Surveys and Monographs 63, Amer. Math. Soc., Providence, 1999.
\bibitem[H2]{H2} \ M. Hovey, Cotorsion pairs, model category structures, and representation theory,  Math. Z. 241(3)(2002), 553-592.
\bibitem[K]{K} H. Krause, Homological Theory of Representations, Cambridge Studies in Advanced Math. 195, Cambridge University Press, Cambridge, 2022.
\bibitem[P]{P} T. Pirashvili, Closed model category structures on abelian categories, Bull. Ac. Sc. Georgian SSR. 124(2)(1986), 265-268.
\bibitem[Q1]{Q1} \ D. Quillen, Homotopical algebra, Lecture Notes in Math. 43, Springer-Verlag, 1967.
\bibitem[Q2]{Q2} \ D. Quillen, Rational Homotopy Theory, Ann. Math. 90(2)(1969), 205-295.
\bibitem[\v{S}]{S} \ J. \v{S}t'ov\'{\i}\v{c}ek, Exact model categories, approximation theory, and cohomology of quasi-coherent sheaves, in: Advances in Representation Theory of Algebras, EMS Series of Congress Reports, European Math. Soc. Publishing House, 2014, pp. 297--367.
\end{thebibliography}
\end{document}